\newtheorem{theorem}{Theorem}[section]
\newtheorem*{theorem*}{Theorem}
\newtheorem*{lemma*}{Lemma}
\newtheorem{lemma}[theorem]{Lemma}
\newtheorem{cor}[theorem]{Corollary}
\theoremstyle{definition}
\newtheorem{remark}[theorem]{Remark}
\newcounter{tenumerate}
\newcommand{\lm}{\lambda}
\renewcommand{\epsilon}{\varepsilon}
\DeclareMathOperator{\var}{Var} \DeclareMathOperator{\Cov}{Cov}
\DeclareMathOperator{\Corr}{Corr}
\newcommand{\R}{{\mathbb R}}
\newcommand{\C}{{\mathbb C}}
\newcommand{\N}{{\mathbb N}}
\newcommand{\remove}[1]{}
\renewcommand{\ge}{\geqslant}
\renewcommand{\leq}{\leqslant}
\renewcommand{\geq}{\geqslant}
\def\XXint#1#2#3{{\setbox0=\hbox{$#1{#2#3}{\int}$}
\vcenter{\hbox{$#2#3$}}\kern-.5\wd0}}
\numberwithin{equation}{section}
\newenvironment{proof}[1][Proof]{\noindent\textbf{#1.} }{\ \rule{0.5em}{0.5em}}
\newcommand{\abbr}[1]{{\sc\lowercase{#1}}}
\begin{document}
\title[Persistence for AR]{Persistence versus stability \\
for auto-regressive processes}
\author[A.\ Dembo]{Amir Dembo$^\star$}
\author[J.\ Ding]{Jian Ding$^\dagger$}
\author[J.\ Yan]{Jun Yan$^\ddagger$}
\address{$^\star$Department of Mathematics, Stanford University, Building
380, Sloan Hall, Stanford, CA 94305, USA}
\address{$^\dagger$Department of Statistics, Wharton, 3730 Walnut St,
Philadelphia, PA 19104, USA}
\address{$^\ddagger$Department of Statistics, Stanford University, Sequoia
Hall, Stanford, CA 94305, USA}
\thanks{$^\star$\,$^\ddagger$Research partially supported by NSF grant
DMS-1613091.}
\thanks{$^\dagger$Research partially supported by NSF grant DMS-1313596.}
\subjclass[2010]{Primary 60G15; Secondary 60J65}
\keywords{Persistence probabilities, Auto-regressive processes, Gaussian
processes, Brownian motion in a cone.}
\date{\today }
\maketitle

\begin{abstract}
The stability of an Auto-Regressive (\textsc{\lowercase{ar}}) time sequence
of finite order $L$, is determined by the maximal modulus $r^\star$ among
all zeros of its generating polynomial. If $r^\star<1$ then the effect of
input and initial conditions decays rapidly in time, whereas for $r^\star>1$
it is exponentially magnified (with constant or polynomially growing
oscillations when $r^\star=1$). Persistence of such \textsc{\lowercase{ar}}
sequence (namely staying non-negative throughout $[0,N]$) with decent probability, requires the
largest positive zero of the generating polynomial to have the largest
multiplicity among all zeros of modulus $r^\star$. These objects are behind
the rich spectrum of persistence probability decay for \textsc{\lowercase{ar}%
}$_L$ with zero initial conditions and i.i.d. Gaussian input, all the way
from bounded below to exponential decay in $N$, with intermediate regimes of
polynomial and stretched exponential decay. In particular, for \textsc{%
\lowercase{ar}}$_3$ the persistence decay power is expressed via the tail
probability for Brownian motion to stay in a cone, exhibiting the discontinuity of such power decay
between the \textsc{\lowercase{ar}}$_3$ whose generating polynomial has complex
zeros of rational versus irrational angles.
\end{abstract}

%\author{Amir Dembo\thanks{% Partially supported by NSF grant DMS-1613091.} %EndAName
%, Jian Ding\thanks{% Partially supported by NSF grant DMS-1313596.} %EndAName
%\ and Jun Yan}

\section{Introduction}

The estimation of persistence probability, that is, the probability that a
sequence of random variables stay positive,%
\begin{equation*}
p_{N}=\mathbb{P}(\Omega _{N}^{+}):=\mathbb{P}(X_{n}\geq 0, \quad \forall 
n \in [0,N) ) \,,
\end{equation*}%
is one of the central themes of research in the theory of probability. 
This topic goes back more than fifty years, to the seminal works by
Rice \cite{rice1945mathematical} and  Sleipan  \cite{slepian62}, that 
ushered and motivated some of the most widely used general tools 
in the study of Gaussian processes. See also the influential early
contribution \cite{NR62} on persistence probabilities for Gaussian 
processes and \cite{longuet1962distribution} listing a host of applications in diverse
areas of physics and engineering. Indeed, there is much 
interest in this phenomena in theoretical physics
(e.g. \cite{ehrhardt2004persistence}, or the 
review \cite{SM08} and references therein), and
in the mathematical literature (see the survey 
\cite{AS15} and the many references therein). One
focal theme has been the study of $p_N$ for a 
centered Gaussian Stationary Process (\abbr{GSP}),
either in discrete or continuous time, where of particular
note is the recent progress, beginning with \cite{FF15,KK16}, on 
% establishing 
conditions for the exponential decay of $p_N$ for such case.
% (and studying the class of processes having super-exponentially 
% decaying $p_T$). 
The law of a \abbr{GSP} is completely determined by its spectral measure
and naturally, the persistence probability is often studied by spectral 
techniques (often in combination with tools from harmonic analysis),
see  \cite{FF15,feldheim2017persistence,feldheim2018probability}.
Another line of research focuses on $p_N$ for nearly stationary, Markov
sequences and processes. Here too,  one expects an exponential
rate of decay of $p_N$. This direction, which falls 
within the classical general theory of quasi-stationary distributions, 
has been explored successfully in \cite{AM17,aurzada2017persistence,
aurzada2018persistence}, advancing our understanding of  
the relation between the Markov chain parameters and
$-\lim_N \log p_N$. The key here is the ability to 
express the latter limit in terms of the leading eigenvalue of a
suitable sub-Markov operator (c.f. the review of such a 
relation in \cite{aurzada2017persistence}). We mention in
passing the related active research on large holes in the 
distribution of zeros for certain families of complex-valued 
Gaussian random analytic functions,
%(which is outside the scope of this concise literature review), 
and the work on persistence exponents in models of random 
environment, media, or scenery (e.g. \cite{aurzada2015persistence}
where time-reversibility compensates for the loss of 
Markov and Gaussian structure).

Our focus here is on the persistence probability for
auto-regressive processes. More precisely, for a (random) sequence $\{ \xi _{n} \}$ and
\begin{equation}
Q(z)=z^{L}-\sum_{j=1}^{L}a_{j}z^{L-j},  \label{def-Q}
\end{equation}%
we define the \textsc{\lowercase{AR}}$%
(Q) $, an auto-regressive process $\{ X_{n}\}$ generated by $Q(z)$ and $\{ \xi
_{n}\}$, by%
\begin{equation}
X_{n}=\sum_{j=1}^{L}a_{j}X_{n-j}+\xi _{n}=\sum_{j=0}^{n} h_{n-j} \xi_{j},%
\qquad \text{ }\forall \text{ }n\geq 0,  \label{def-X-h}
\end{equation}%
where the right equality holds for zero initial
conditions, namely with $X_{-L}=\ldots =X_{-1}=0$. Further, let
\begin{equation}\label{def:r-star}
r^{\star }:=\max \{|\lambda |: \lambda \in \Lambda \} \,, \quad 
\Lambda:=\{ \lambda: Q(\lambda)=0 \}
\end{equation}
denote the maximum modulus among the $L$ zeros of $Q(z)$. Recall that 
the corresponding linear system is stable, namely with $|h_n| \to 0$,
if and only if $r^{\star }<1$. Taking hereafter for  $\{ \xi_n \}$ i.i.d. variables 
yields an $\R^L$-valued Markov chain, which for a stable \textsc{\lowercase{AR}} 
admits a stationary distribution. Assuming also light-tails for $\{ \xi_n \}$,
an exponential decay of the persistence probability ensues for such processes.
Indeed, the study of properties of $\lim_N \{-\log p_N\}$ for stable \textsc{\lowercase{AR}} 
processes is used by \cite{aurzada2017persistence} to showcase the main themes of this 
operator-based approach. However, among the
unstable \textsc{\lowercase{AR}} sequences one finds
 a host of stochastic processes of much interest. Perhaps the most prominent one 
is the random walk (ie. \textsc{\lowercase{AR}}$(z-1)$ sequence),
for which sharp decay rate $p_N \sim N^{-1/2}$ is well known to hold
under minimal conditions on the law of the increments $\xi_n$ 
(c.f. \cite{AS15}). Going further,  \cite{DDG13} establishes a persistence 
power exponent for integrated random walk (ie. \textsc{\lowercase{AR}}$((z-1)^{2})$), 
universally over all mean-zero, square-integrable $\xi_n$. A different extension 
is provided in \cite{AB11} which utilizes the invariance principle to establish the persistence 
power exponent for weighted random walks.
% (going beyond the scope of \textsc{\lowercase{AR}} sequences).
However, not much is known beyond these isolated special cases, 
with this paper being the first systematic study of the rich 
persistence behavior across the unstable
Gaussian \textsc{\lowercase{AR}} processes.

Specifically, assume \abbr{WLOG} that $a_{L}\neq 0$ and let
$m(\lambda) \ge 1$ denote the multiplicity of each value $\lambda$ 
in the set $\Lambda$ of zeros of $Q(z)$, now represented as
\begin{equation}
\Lambda :=\{\lambda _{1}|_{m(\lambda _{1})},\ldots ,\lambda _{\ell
}|_{m(\lambda _{\ell })}\}.  \label{def-gamma}
\end{equation}%
To better relate persistence and stability, we further use the notations
\begin{equation}\label{def:m-max}
% r^{\star }:=\max \{|\lambda |:\lambda \in \Lambda \} \,, \quad 
\Lambda^{\star}:= \{\lambda \in \Lambda :|\lambda |=r^{\star }\}\,, \quad 
m^\star := \max \{ m(\lm) : \lm \in \Lambda^\star \} \,.
\end{equation}
Then, focusing on the Gaussian case, namely with i.i.d. $\xi_n \sim N(0,1)$, we show in the sequel that:
\begin{itemize}
\item When $r^{\star }=1$, $\left\vert h_{n}\right\vert $ grows polynomially
and could be decomposed to a strictly positive part and an oscillatory part.
Correspondingly, we decompose $X_{n}$ as the sum of a zero-angle component -
an integrated random walk (\textsc{\lowercase{IRW}}), an oscillatory
component - a rotated random walk, and small order terms. The
persistence probability decay is then determined by the interaction between the first two
parts.

\item When $r^{\star }>1$, $\left\vert h_{n}\right\vert \rightarrow \infty $
exponentially, yielding a highly unstable process. In particular, if $%
m(r^{\star })\geq m(\lambda )$ for all $\lambda \in \Lambda ^{\star }$, then $%
h_{n}$ stay positive for $n$ large enough, with a significant contribution to 
$(X_n)$ from the first $\xi _{n}$'s yielding a non-decaying in $N$ 
persistence probabilty. In contrast, when $m(\lambda _{0})>m(r^{\star })\geq 1$ 
for some $\lambda _{0}\in \Lambda ^{\star }$, the oscillatory component 
competes well with the unstable zero-angle part, resulting with a roughly 
polynomial decay of $p_N$.
\end{itemize}

More precisely, our first theorem describes the qualitative behavior 
of $p_N$ for the different regimes of the set of zeros of $Q(\cdot)$.
\begin{theorem}
\label{thm-1} 
The decay of the persistence probabilities $p_N$ for a Gaussian \textsc{\lowercase{AR}}$(Q)$ 
fit exactly one of the following classes:
\newline
\noindent (a) Constant regime: 
if $r^{\star }>1$, $m(r^{\star }) = m^\star$, then $\inf_{N} \{ p_N \} >0$.
\newline
\noindent (b) Exponential regime: if either $r^{\star }<1$ or 
$m(r^{\star})=0$, then for some $C<\infty$ 
\begin{equation}
C^{-1} e^{-CN}\leq p_{N}\leq C e^{-N/C} \,, \qquad \forall N\in \N \,.
\end{equation}%
\noindent (c) Stretched exponential regime: if $r^{\star }=1$,
$m(r^\star) \in [1,m^\star)$, then
\begin{equation}
p_{N}=\exp(-N^{\alpha + o(1)}) \,, \qquad \alpha = 1 -\frac{m(r^\star)}{m^\star} \,.
\label{eq-bound-stretched}
\end{equation}%
\noindent (d) Unstable positive mode dominated by an oscillatory one:
if $r^{\star }>1$, $m(r^{\star}) \in [1,m^\star)$, then
\begin{equation}
p_{N}=N^{-\alpha +o(1)}\,,\qquad 
\alpha = \frac{1}{2} \sum_{\lambda \in \Lambda
^{\star}} (m(\lambda )-m(r^\star))_{+} (m(\lambda)-m(r^\star)+1)_{+}\,.
\end{equation}%
(e) Approximate \textsc{\lowercase{IRW}}: if $r^{\star}=1$, 
$m(r^\star) = m^\star$, then for some $C<\infty$,
\begin{equation}
C^{-1} N^{-C} \leq p_{N} \leq C N^{-1/C}\,, \qquad \forall N \in \N \,.
\end{equation}%
\noindent
\end{theorem}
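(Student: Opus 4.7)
The plan is to base every case on the partial-fraction expansion
\[
h_n = \sum_{\lambda\in\Lambda}\,p_\lambda(n)\,\lambda^n,\qquad \deg p_\lambda = m(\lambda)-1,
\]
which, together with $X_n=\sum_{j=0}^n h_{n-j}\xi_j$, induces a Gaussian decomposition $X_n=\sum_{\lambda\in\Lambda}X_n^{(\lambda)}$. The regime is dictated by which $\lambda\in\Lambda^\star$ dominate the large-$n$ behavior, and in particular by whether the positive real $r^\star$ sits among the maximal-multiplicity elements of $\Lambda^\star$.

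Cases (a), (b) under stability, and (e) I would handle by standard arguments. In (a), $r^\star$ alone governs $h_n$, so $Z:=\lim_n(r^\star)^{-n}n^{-(m^\star-1)}X_n$ exists almost surely as a non-degenerate centered Gaussian; on $\{Z>\delta\}$ the process is eventually positive, and a small-ball estimate for the finite initial segment gives $\inf_N p_N>0$. In the stable sub-case of (b) I would invoke the quasi-stationary/operator framework of \cite{aurzada2017persistence}. In (e), the leading $h_n\sim c\binom{n}{m^\star-1}$ makes $n^{-(m^\star-1/2)}X_{\lfloor Nt\rfloor}$ tend by Donsker's theorem to an $(m^\star-1)$-fold integrated Brownian motion, and polynomial persistence bounds then follow from the iterated-integral estimates in \cite{DDG13} and its extensions.

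The remaining cases share the feature that the leading part of $h_n$ oscillates. For these I would split
\[
X_n = X_n^\parallel + X_n^\perp + R_n,
\]
where $X_n^\parallel$ gathers the positive real zero in $\Lambda^\star$ (when present), $X_n^\perp$ gathers the complex and negative zeros of maximal multiplicity $m^\star$ in $\Lambda^\star$, and $R_n$ is polynomially smaller than both. After normalization by $(r^\star)^{-n}n^{-(m^\star-1)}$, the persistence of $X_n$ is controlled by a cone-containment problem for a finite-dimensional Gaussian process whose torus structure is determined by the rotation angles on $\Lambda^\star$. In the oscillatory sub-case of (b) no drift is present and the cone is proper, so the decay is exponential by stationary-Gaussian arguments in the spirit of \cite{FF15,KK16}. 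In (d) the drift from $X_n^\parallel$ shifts the limit to a Brownian motion in a cone whose exit exponent, via a Burkholder-type eigenvalue computation, should match the stated combinatorial sum.

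The main obstacle is case (c). Here $r^\star=1$ so neither $X_n^\parallel\sim n^{m(r^\star)-1/2}$ nor $X_n^\perp\sim n^{m^\star-1/2}$ is exponentially large, and the slow positive drift from $X_n^\parallel$ interacts delicately with the oscillatory fluctuations of $X_n^\perp$ to produce the stretched-exponential rate. I would coarse-grain $[0,N]$ into blocks of length $K=N^\beta$ and bound the per-block probability that $X_n^\perp\geq -X_n^\parallel$ by Gaussian large deviations: the upper bound via Slepian comparison against a stationary proxy for $X_n^\perp$, the lower bound by an explicit tube-event construction for $(\xi_j)$ on each block. Optimizing $\beta$ against the $(N/K)$-fold product then yields $\alpha=1-m(r^\star)/m^\star$. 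A further subtlety, responsible for the rational/irrational discontinuity noted in the abstract, is that the effective angular state space of $X_n^\perp$ differs qualitatively according to whether the rotation angles on $\Lambda^\star$ are rational multiples of $\pi$, and this dichotomy must be tracked in both bounds.
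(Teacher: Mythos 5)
Your plan covers the correct qualitative picture, but several steps fail or deviate essentially from what actually works, and a couple of cases contain claims that are simply false under the stated hypotheses.

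\textbf{Case (a).} Your claim that $Z:=\lim_n (r^\star)^{-n} n^{-(m^\star-1)}X_n$ exists a.s.\ is wrong in the generality of the statement: $m(r^\star)=m^\star$ does not preclude other $\lambda\in\Lambda^\star$ from also having multiplicity $m^\star$. For $Q(z)=(z-2)(z+2)$ one has $h_{2k}=4^k$, $h_{2k+1}=0$, so along even and odd times one gets different limits and your $Z$ is undefined. In other words $r^\star$ does \emph{not} alone govern $h_n$. The paper gets around this by looking not at $X_n$ itself but at $\sum_{i\le n_1}b_{n,i}(r^\star)^i$, where the oscillatory contributions from other maximal modes sum to $o((r^\star)^n n^{m^\star-1})$ by a discrete averaging estimate (Lemma~\ref{lemma-convolution}). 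The same omission reappears in your case (e): on the hypothesis $m(r^\star)=m^\star$ there can be several $\lambda\in\Lambda^\star$ of multiplicity $m^\star$, so $h_n$ is not asymptotic to $c\binom{n}{m^\star-1}$ alone and a single Donsker limit to iterated Brownian motion does not exist. The paper accordingly splits the lower bound for (e) into a ``dominant zero-angle'' case and a ``competing oscillatory'' case, the latter requiring a rotated-random-walk small-ball estimate (Lemma~\ref{lem-rotated-random-walk}) combined with a Gaussian correlation argument.

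\textbf{Case (d).} Here your plan —- normalize, reduce to a Brownian motion in a cone, read off the eigenvalue -— is off the mark. When $r^\star>1$ the process is so unstable that after roughly $\kappa\log N$ steps the remaining innovations are negligible at scale $N^{-4L}$, and the persistence event is essentially a deterministic statement about the linear iterates $\mathsf A_{(1)}^{n}\mathbf Y_{n'}$ for a Gaussian initial condition $\mathbf Y_{n'}\in\mathbb R^{L_1}$. The exponent is not a Laplace--Beltrami eigenvalue but the rate of volume decay of the set $\Pi$ of vectors whose orbit stays in the positive orthant; the Jordan-block structure of $\mathsf A_{(1)}$ converts this into the combinatorial sum $\tfrac12\sum_{\lambda\in\Lambda^\star}(m(\lambda)-m(r^\star))_+(m(\lambda)-m(r^\star)+1)_+$. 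A cone reduction is used in this paper only for Theorem~\ref{thm-discontinuity} (the $r^\star=1$, simple-zeros case), not for (d).

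\textbf{Case (c).} Your coarse-graining instinct is essentially right and matches the paper's dyadic/blockwise scheme, but the final remark that the rational/irrational angle dichotomy ``must be tracked in both bounds'' is a spurious import from Theorem~\ref{thm-discontinuity}. The exponent $\alpha=1-m(r^\star)/m^\star$ in (c) depends only on multiplicities, not on the rotation angles, and the paper's argument for (c) is uniform in the angles. Finally, for the oscillatory sub-case of (b) (namely $m(r^\star)=0$), a reference to the stationary-GSP literature is too vague; the paper instead uses an elementary non-negative-polynomial factoring trick (Lemma~\ref{lemma-polynomial}, Corollary~\ref{cor-sub-negative}) to extract an i.i.d.\ Gaussian subsequence that must cross zero with positive probability at constant frequency, which directly gives exponential decay.

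\end{document}
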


\begin{remark}
Theorem \ref{thm-1} does not establish the existence of exponents in part
(b) and (e). In the stable case of part (b) with $r^\star<1$, starting the process 
at stationary initial conditions, one has the existence and formula for the 
exponent from \cite{AM17}. While a power exponent is to be expected in part (e),
its existence in such full generality remains an open problem.
%as is finding a formula for it,  
% in the sequel we prove  the existence of such power exponent 
% for one special case.
\end{remark}

Part (a)--(e) of Theorem \ref{thm-1} are established in
Section \ref{section-constant}-\ref{section-app-int-rw} respectively. 

\medskip
Recall that a flexible condition for the continuity of the persistence exponent 
of centered \abbr{GSP}'s is derived in \cite{DM15}. In contrast, our next theorem 
demonstrates the possibly highly discontinuous nature of the power exponent in part (e), 
by analyzing the special case of  \abbr{AR}$(Q)$ processes with
$\Lambda = \{1, e^{i \theta}, e^{- i \theta}\}$. Its proof
in Section \ref{section-power-law}, further elaborates the connection in this case
between $p_N$ and the probability that a Brownian motion stays for a long time
in a generalized cone. 
\begin{theorem}
\label{thm-discontinuity} 
Denote by $\mathcal{Q}$ the collection of real polynomials 
$Q(z)=(z-1)(z-e^{i \theta})(z-e^{-i \theta})$, where 
$Q_{\ell} \rightarrow Q$ in $\mathcal{Q}$ if the corresponding angles converge. 
To each $Q \in \mathcal{Q}$ corresponds a finite $\beta _{Q}>0$ such that
the persistence probabilities of the Gaussian \abbr{AR}$(Q)$ process satisfy
\begin{equation}
p_{N}=N^{-\beta _{Q}+o(1)}\,.
\end{equation}%
Furthermore, if the angle $\theta $ for $Q$ is such that $%
\theta /2\pi \in \mathbb{Q}$, then there exist some $Q_{\ell }\in 
\mathcal{Q}$ with $Q_{\ell }\rightarrow Q$ such that $\liminf_{\ell
\rightarrow \infty }\beta _{Q_{\ell }}>\beta _{Q}$. In contrast, for 
$\theta /2\pi \not\in \mathbb{Q}$ any $Q_{\ell
}\rightarrow Q$ results with $\beta _{Q_{\ell}} \to \beta _{Q}$.
\end{theorem}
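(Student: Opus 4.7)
The plan is to reduce the persistence of the Gaussian \abbr{AR}$(Q)$ process to the survival probability of a three-dimensional Brownian motion in an explicit cone $C_Q$, and then to read off the stated (dis)continuity of $\beta_Q$ from the geometry of $C_Q$. \textbf{Decomposition and invariance principle.} Partial fractions on $Q(z)=(z-1)(z-e^{i\theta})(z-e^{-i\theta})$ give $h_n=A+2B\cos(n\theta+\psi)$, with $A=1/(4\sin^2(\theta/2))>0$, $B>0$, and $\psi$ all continuous in $\theta$. Substituting into \eqref{def-X-h} and expanding $\cos((n-j)\theta+\psi)$,
\begin{equation*}
X_n = A\,S_n + B\cos(n\theta+\psi)\,U_n + B\sin(n\theta+\psi)\,V_n,
\end{equation*}
with $S_n=\sum_{j\le n}\xi_j$, $U_n=\sum_{j\le n}\cos(j\theta)\xi_j$, $V_n=\sum_{j\le n}\sin(j\theta)\xi_j$. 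The off-diagonal covariances $\E[S_nU_n], \E[S_nV_n], \E[U_nV_n]$ are bounded uniformly in $n$ by geometric-sum estimates (valid since $\theta\in(0,2\pi)\setminus\{\pi\}$), so Donsker's theorem yields $(S_{\lfloor Nt\rfloor},U_{\lfloor Nt\rfloor},V_{\lfloor Nt\rfloor})/\sqrt{N}\Rightarrow(B_t,U^{*}_t,V^{*}_t)$, a 3D Brownian motion with diagonal covariance $\mathrm{diag}(t,t/2,t/2)$. Setting $W^*_t=U^*_t+iV^*_t$, the persistence event rewrites (heuristically) as $AB_t+B\,\mathrm{Re}(e^{-i(n\theta+\psi)}W^*_t)\ge 0$ for all $n\le N$.

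\textbf{Reduction to a cone and the power exponent.} Coarse-graining on a mesoscopic window of length $k$ with $1\ll k\ll N$ freezes $(B_t,W^*_t)$ while the rotations $e^{-in\theta}$ traverse their orbit on $S^1$. If $\theta/2\pi\notin\mathbb{Q}$ the orbit is dense, so the constraint collapses to the circular cone $C^{\mathrm{circ}}_Q=\{(b,w)\in\R\times\C:Ab\ge B|w|\}$; if $\theta=2\pi p/q$ only $q$ values of $e^{-in\theta}$ occur, giving a strictly larger polyhedral cone $C^{\mathrm{poly}}_Q\supsetneq C^{\mathrm{circ}}_Q$ with $q$ planar faces through the positive $b$-axis. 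By the classical DeBlassie/Ba\~nuelos--Smits theorem, a 3D Brownian motion started inside a proper open cone $C$ survives up to time $T$ with probability $T^{-\beta(C)+o(1)}$, where $\beta(C)>0$ is an explicit increasing function of the first Dirichlet eigenvalue $\lambda_1(C\cap S^2)$, hence strictly monotone decreasing under inclusion of cones. This yields $p_N=N^{-\beta_Q+o(1)}$ with $\beta_Q=\beta(C_Q)\in(0,\infty)$.

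\textbf{Continuity vs.\ discontinuity.} For $\theta/2\pi\notin\mathbb{Q}$, any $Q_\ell\to Q$ has either $\theta_\ell$ irrational (in which case $C^{\mathrm{circ}}_{Q_\ell}\to C^{\mathrm{circ}}_Q$ by continuity of $A,B$ in $\theta$) or $\theta_\ell=2\pi p_\ell/q_\ell$ rational, in which case necessarily $q_\ell\to\infty$ (rationals with bounded denominator form a discrete set) and the polyhedral cones $C^{\mathrm{poly}}_{Q_\ell}$ Hausdorff-converge to $C^{\mathrm{circ}}_Q$; either way $\beta_{Q_\ell}\to\beta_Q$ by continuity of the Dirichlet eigenvalue under these perturbations of the spherical section. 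For $\theta/2\pi=p/q\in\mathbb{Q}$, picking any $\theta_\ell\to\theta$ with $\theta_\ell/2\pi$ irrational gives $C^{\mathrm{circ}}_{Q_\ell}\to C^{\mathrm{circ}}_Q\subsetneq C^{\mathrm{poly}}_Q$, hence $\liminf_\ell\beta_{Q_\ell}=\beta(C^{\mathrm{circ}}_Q)>\beta(C^{\mathrm{poly}}_Q)=\beta_Q$.

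\textbf{Main obstacle.} The crux is turning the coarse-graining into rigorous two-sided bounds: the discrete persistence event must be sandwiched between the survival of $(B_t,W^*_t)$ in two perturbed cones (slightly larger and slightly smaller than $C_Q$) chosen so that the leading power exponent is unaffected. This requires starting the analysis after a short warmup interval of length $N^{\delta}$ so that the Brownian motion is strictly in the interior of the cone, controlling that the microscopic cancellations between $B_t$, $W_t^*$, and the rapidly rotating factor $e^{-in\theta}$ do not produce spurious excursions across the cone boundary on sub-diffusive scales, and---in the rational case---tracking the $q$-periodic residue-class structure of the discrete constraints so that no face of $C^{\mathrm{poly}}_Q$ is lost or duplicated under coarse-graining.
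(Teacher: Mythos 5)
Your proposal takes essentially the same route as the paper: decompose $X_n$ into a zero-angle random walk plus a rotated (rotationally invariant) random walk, reduce persistence to survival of a three-dimensional Brownian motion in a generalized cone whose spherical cross-section is a spherical cap for $\theta/2\pi\notin\mathbb{Q}$ and a strictly larger spherical polygon for $\theta/2\pi\in\mathbb{Q}$, invoke the Ba\~nuelos--Smits cone estimate so that $\beta_Q$ is an increasing function of the first Dirichlet eigenvalue of the cross-section, and derive the (dis)continuity from strict inclusion of the cross-sections together with eigenvalue stability under domain perturbation. The ``main obstacle'' you flag is precisely where the paper invests all of its technical effort, since Donsker-type weak convergence is too coarse to resolve an event of probability $N^{-\beta_Q+o(1)}$: the paper replaces it with Slepian comparisons against $(1\pm\epsilon)$-rescaled zero-angle components (Lemma~\ref{lem-rotation}), a strong coupling of the rotated random walk to a planar Brownian motion valid up to $(\log N)^4+\epsilon\sqrt{n}$ errors (Lemma~\ref{lem-couple-T-BM}), a Radon--Nikodym change-of-measure estimate showing that the resulting $\pm\epsilon\sqrt{n}$ drifts only shift the power by $o(1)$ (Lemma~\ref{lem-random-nickdim}), and a $(\log N)^9$ warmup window, all packaged in Lemma~\ref{lem-reduce-to-BM}.
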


\textbf{Acknowledgment.} We thank S.R.S. Varadhan for stimulating
discussions and for pointing out reference \cite{BNV94}, and we thank Nike
Sun and Allan Sly for interesting discussions. The work was initiated when
J.D. was a postdoc at MSRI program on Random Spacial Processes.

\section{Bounded persistence probability: Theorem~\protect\ref{thm-1} (a) 
\label{section-constant}}
 
Using throughout the convention $\lambda := |\lambda| e^{i \theta_\lambda}$,
with $\theta _{\lambda} \in [0,2\pi)$ denoting the angle of $\lambda \in \C$,
we start with a few standard linear algebra facts 
(e.g. see \cite[Section 3.6]{BrockwellDavis}).
\begin{lemma}
\label{lem-general-regression-coefficients}
(a). The solution of the 
%linear 
difference equation
\begin{equation}
q_{\ell }=\sum_{j=1}^{L}a_{j}q_{\ell-j}\,, \qquad \ell \geq L\,,
\label{general-regression-relation}
\end{equation}
with $\{a_j\}$ the coefficients of the 
polynomial $Q(\cdot)$ of \eqref{def-Q}, is 
for $\Lambda $ of \eqref{def-gamma}, of the form 
\begin{equation}
q_{\ell }=\sum_{\lambda \in \Lambda }\sum_{j=0}^{m(\lambda )-1}\beta
_{\lambda ,j}\lambda ^{\ell }\ell ^{j}\,,
\label{general-regression-coefficients}
\end{equation} 
where   
$\beta _{\lambda,j} \in \mathbb{C}$ are uniquely 
chosen so \eqref{general-regression-coefficients} matches 
the initial conditions $(q_{0},q_{1},\ldots,q_{L-1})$.
If $(q_{0},q_{1},\ldots,q_{L-1}) \in \R^L$ 
then $\beta _{\lambda,j}=\overline{\beta} _{\overline{\lambda},j}$.
Conversely, to any 
$\{ \beta _{\lambda ,j}:\lambda \in \Lambda ,0\leq j\leq m(\lambda
)-1\}$ with $\beta _{\lambda ,j}=\overline{\beta}_{\overline{\lambda},j}$
corresponds $(q_{0},q_{1},\ldots,q_{L-1}) \in \R^L$ such that 
\eqref{general-regression-coefficients} holds.

\noindent
(b) The solution $\{h_{n}\}$ of \eqref{def-X-h} is of the form 
\begin{equation}
h_{\ell }=\sum_{\lambda \in \Lambda }\sum_{j=0}^{m(\lambda )-1}a_{\lambda
,j}|\lambda |^{\ell }\ell ^{j}\cos (\ell \theta _{\lambda }+\theta _{\lambda
,j}),  \label{def-h}
\end{equation}
for some $a_{\lambda ,j} \in \C$ and $\theta _{\lambda,j} \in \R$ 
that are determined by $Q(\cdot)$, such that   
$a_{\lambda,m(\lambda)-1}\neq 0$, $a_{\lambda,j}=a_{\bar{\lambda},j}$,
$\theta _{\lambda ,j}=-\theta _{\overline{\lambda},j}$ and 
\abbr{WLOG} we set $\theta_{\lambda,j}=0$ if $\theta_\lambda=0$.
\end{lemma}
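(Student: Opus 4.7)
My plan is to treat this as a standard linear-recursion lemma, organizing the argument around the shift operator and a dimension count.

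For part (a), I first verify that each sequence $q_\ell^{(\lambda,j)} := \lambda^\ell \ell^j$, for $\lambda \in \Lambda$ and $0 \le j \le m(\lambda)-1$, solves \eqref{general-regression-relation}. Writing the recursion as $Q(E) q = 0$ for the shift $(Eq)_\ell := q_{\ell+1}$, and factoring $Q(z) = \prod_{\lambda \in \Lambda}(z-\lambda)^{m(\lambda)}$, it suffices to check $(E-\lambda)^{m(\lambda)}(\lambda^\ell \ell^j) = 0$ for $j < m(\lambda)$, which is immediate by a short induction on $j$ using $(E-\lambda)(\lambda^\ell \ell^j) = \lambda^\ell\big((\ell+1)^j - \ell^j\big)\cdot\lambda$, a polynomial in $\ell$ of degree $j-1$. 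Next, I observe that the set of $\C$-valued solutions is an $L$-dimensional vector space, because a solution is determined by $(q_0,\ldots,q_{L-1})$ and those can be prescribed arbitrarily. Since $\sum_{\lambda} m(\lambda) = L$, it remains to establish linear independence of the $q^{(\lambda,j)}$, which I handle by applying to any vanishing combination the operators $\prod_{\mu\neq\lambda}(E-\mu)^{m(\mu)}$ to isolate each $\lambda$-block, and then using that $\{\ell^j\}_{j=0}^{m(\lambda)-1}$ are linearly independent functions of $\ell$. The map from $\{\beta_{\lambda,j}\}$ to $(q_0,\ldots,q_{L-1})$ is therefore a $\C$-linear isomorphism, giving uniqueness of the $\beta_{\lambda,j}$.

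The reality statement then follows from a symmetry argument: since the $a_j$ are real, conjugation $q_\ell \mapsto \overline{q_\ell}$ sends solutions to solutions, and uniqueness forces the expansion of $\overline{q_\ell}$ to equal the original with $\beta_{\lambda,j}$ replaced by $\overline{\beta_{\lambda,j}}$ and $\lambda$ by $\overline{\lambda}$; matching terms yields $\beta_{\lambda,j} = \overline{\beta_{\overline{\lambda},j}}$. Conversely, that symmetry on the $\beta$'s makes the sum in \eqref{general-regression-coefficients} real for every $\ell$, so the induced initial condition lies in $\R^L$; since both parameter spaces have the same real dimension $L$, the restricted map is still a bijection.

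For part (b), I use that $h_n$ is itself a solution of \eqref{general-regression-relation} for $n \ge L$: this follows by taking $\xi_j = \delta_{j,0}$ in \eqref{def-X-h}, which gives $X_n = h_n$ and the recursion $h_n = \sum_j a_j h_{n-j}$ for $n \ge L$. Applying part (a) yields $h_\ell = \sum_{\lambda,j}\beta_{\lambda,j}\lambda^\ell \ell^j$ with the conjugation symmetry. I then group conjugate pairs: for $\lambda = |\lambda|e^{i\theta_\lambda}$ with $\theta_\lambda \neq 0$, writing $\beta_{\lambda,j} = \tfrac12 a_{\lambda,j} e^{i\theta_{\lambda,j}}$ gives
\begin{equation*}
\beta_{\lambda,j}\lambda^\ell \ell^j + \beta_{\overline\lambda,j}\overline\lambda^\ell \ell^j
= a_{\lambda,j}|\lambda|^\ell \ell^j \cos(\ell\theta_\lambda + \theta_{\lambda,j}),
\end{equation*}
and the required symmetries $a_{\lambda,j} = a_{\overline\lambda,j}$, $\theta_{\lambda,j} = -\theta_{\overline\lambda,j}$ are built in; for real $\lambda > 0$ the convention $\theta_{\lambda,j} = 0$ absorbs those terms in the same form. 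Finally, to see $a_{\lambda,m(\lambda)-1}\neq 0$, I argue via partial fractions on the generating function: defining $H(z) := \sum_{n\ge 0} h_n z^{-n-1}$, the convolution identity makes $H(z) = z^{L-1}/Q(z)$, whose partial fraction expansion has the coefficient of $(z-\lambda)^{-m(\lambda)}$ equal to $\lambda^{L-1}/\prod_{\mu\neq\lambda}(\lambda-\mu)^{m(\mu)}$. This is nonzero because $a_L\neq 0$ forces every root $\lambda$ to be nonzero, and distinct roots give nonzero differences. This nonvanishing coefficient is precisely what controls the leading $\ell^{m(\lambda)-1}$ term, proving $a_{\lambda,m(\lambda)-1}\neq 0$.

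The only real subtlety is the last nonvanishing claim; everything else is bookkeeping on top of a standard shift-operator argument, so I expect no serious obstacle.
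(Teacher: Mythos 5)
Your proof is correct. The paper itself does not prove this lemma --- it cites it as a collection of ``standard linear algebra facts'' (referencing Brockwell--Davis, Section 3.6) --- so there is no internal argument to compare against. Your route via the shift operator $Q(E)$, a dimension count against $\sum_\lambda m(\lambda)=L$, linear independence by applying $\prod_{\mu\neq\lambda}(E-\mu)^{m(\mu)}$, and the generating function $H(z)=z^{L-1}/Q(z)$ with its partial-fraction expansion to get $a_{\lambda,m(\lambda)-1}\neq 0$ is exactly the standard verification, and all the steps hold (noting $0\notin\Lambda$ since $a_L\neq 0$ ensures the top partial-fraction coefficients are nonzero). Two small points worth tightening if you write this up fully: (i) in grouping conjugate pairs, state explicitly that $a_{\lambda,j}$ so produced is real (the stated ``$a_{\lambda,j}\in\C$'' in the lemma is vacuously compatible with this, since a real $h_\ell$ forces it), and (ii) the case of a negative real root $\lambda$ ($\theta_\lambda=\pi$) should be mentioned alongside $\lambda>0$; it is handled identically with $\theta_{\lambda,j}=0$ since $\beta_{\lambda,j}$ is then real and $\lambda^\ell=|\lambda|^\ell\cos(\pi\ell)$.
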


Equipped with Lemma \ref{lem-general-regression-coefficients}, 
we next show that $p_N$ is uniformly bounded away from zero
when $r^\star > 1$ and $m(r^\star)=m^\star$ of \eqref{def:m-max}.

\begin{proof}[Proof of Theorem~\protect\ref{thm-1}(A)]
Denote $b_{n,i}:=h_{n-i}$. From (\ref{def-h})%
\begin{equation}
|b_{n,i}|\leq C(r^{\star })^{n-i}(n-i)^{m(r^{\star })-1}\text{,}
\label{bound-coefficient-b}
\end{equation}%
for a constant $C<\infty $. \abbr{WLOG} we assume that $%
a_{r^{\star },m(r^{\star })-1}>0$ and $\theta _{r^{\star },m(r^{\star
})-1}=0 $. With Lemma \ref{lem-general-regression-coefficients} and Lemma~%
\ref{lemma-convolution}, we can verify that for $n\geq n_{1}$%
\begin{equation*}
\sum_{i=1}^{n_{1}}b_{n,i}(r^{\star })^{i}=\sum_{i=1}^{n_{1}}a_{r^{\star
},m(r^{\star })-1}(r^{\star })^{n}(n-i)^{m(r^{\star })-1}+o\left( (r^{\star
})^{n}n^{m(r^{\star })-1}\right) ,
\end{equation*}%
which further implies that there exists $n_{1}$ sufficiently large such that%
\begin{equation}
\sum_{i=1}^{n_{1}}b_{n,i}(r^{\star })^{i}\geq 2(r^{\star })^{n}n^{m(r^{\star
})-1}\mbox{
for all }n\geq n_{1}.  \label{bn-r-star-bound-1}
\end{equation}%
Applying Lemma~\ref{lem-general-regression-coefficients}, we have that for
some $C<\infty $,%
\begin{equation}
\sum_{i=1}^{n_{1}}\left\vert b_{n,i}\right\vert (r^{\star })^{i}\leq
C\sum_{i=1}^{n_{1}}(r^{\star })^{n}(n-i)^{m(r^{\star })-1}\leq
Cn_{1}(r^{\star })^{n}n^{m(r^{\star })-1}.  \label{bn-r-star-bound-2}
\end{equation}%
Writing $\xi _{i}=$ $(r^{\star })^{i}+(\xi _{i}-(r^{\star })^{i})$, in light
of (\ref{def-X-h}), (\ref{bn-r-star-bound-1}) and (\ref{bn-r-star-bound-2}),
choosing $\delta =1/(3Cn_{1})$, we have that under the event $\mathcal{A}%
_{\delta }:=\{1+2\delta \leq (r^{\star })^{-i}\xi _{i}\leq 1+3\delta
,i=1,...,n_{1}\}$,%
\begin{equation}
\sum_{i=1}^{n_{1}}b_{n,i}\xi _{i}\geq 2(r^{\star })^{n}n^{m(r^{\star
})-1}-3\delta Cn_{1}(r^{\star })^{n}n^{m(r^{\star })-1}\geq (r^{\star
})^{n}n^{m(r^{\star })-1}\text{, }\mbox{
for all }n\geq n_{1}\,.  \label{eq-b-n-i-convolution}
\end{equation}%
Define%
\begin{equation*}
\widehat{X}_{n}:=\sum_{i=1}^{n\wedge n_{1}}b_{n,i}\xi _{i}\text{ and }%
X_{n}^{\star }=X_{n}-\widehat{X}_{n}.
\end{equation*}%
By \eqref{eq-b-n-i-convolution} we can see that for a constant $c>0$,%
\begin{equation}
\mathbb{P}(\widehat{X}_{n}\geq (r^{\star })^{n}n^{m(r^{\star })-1}%
\mbox{ for
all }n\geq n_{1})\geq \mathbb{P}(\mathcal{A}_{\delta })\geq c.
\label{eq-large-n-1}
\end{equation}%
Note that $\widehat{X}_{n}\in \mathrm{span}\{\widehat{X}_{n_{1}+1},\ldots ,%
\widehat{X}_{n_{1}+L}\}$ for all $n\geq n_{1}+L$. Combined with %
\eqref{eq-b-n-i-convolution}, it follows that there exists a set $B\subseteq
(\mathbb{R}^{+})^{L}$ with positive Lebesgue measure such that%
\begin{equation}
\widehat{X}_{n}\geq (r^{\star })^{n}n^{m(r^{\star })-1}\mbox{ for all }n\geq
n_{1}+L\mbox{ if }(\widehat{X}_{n_{1}+1},\ldots ,\widehat{X}_{n_{1}+L})\in
B\,.  \label{eq-in-span}
\end{equation}%
Note that $(\widehat{X}_{1},\ldots ,\widehat{X}_{n_{1}+L})$ is a Gaussian
vector of full rank (the covariance matrix is strictly positive definite),
and thus the conditional Gaussian vector of $(\widehat{X}_{1},\ldots ,%
\widehat{X}_{n_{1}})$ given $(\widehat{X}_{n_{1}+1},\ldots ,\widehat{X}%
_{n_{1}+L})$ is also of full rank. Combined with \eqref{eq-large-n-1} and %
\eqref{eq-in-span}, it follows that for a constant $c^{\prime }>0$%
\begin{equation}
\mathbb{P}(\widehat{X}_{n}\geq (r^{\star })^{n}n^{m(r^{\star })-1}%
\mbox{ for
all }n\geq n_{1}\mbox{
and }X_{n}\geq 0\mbox{ for }1\leq n\leq n_{1})\geq c^{\prime }.
\label{eq-truncate-good}
\end{equation}%
Under the event $\mathcal{B}:=\{|\xi _{i}|\leq C_{0}^{-1}(1-\alpha /r^{\star
})\alpha ^{i}\mbox{ for all }i\geq n_{1}+1\}$, for fixed $\alpha \in
(1,r^{\star })$, we have%
\begin{equation*}
\left\vert X_{n}^{\star }\right\vert =|\sum\limits_{i=n_{1}+1}^{n}b_{n,i}\xi
_{i}|\leq \sum\limits_{i=n_{1}+1}^{n}C(r^{\star })^{n-i}(n-i)^{m(r^{\star
})-1}C_{0}^{-1}(1-\alpha /r^{\star })\alpha ^{i}\leq \frac{C_{1}}{C_{0}}%
\left( r^{\star }\right) ^{n}n^{m(r^{\star })-1},
\end{equation*}%
for some $C_{1}<\infty $ independent of $n$. We can choose $C_{0}>0$ such
that%
\begin{equation}
\{|X_{n}^{\star }|\leq (r^{\star })^{n}n^{m(r^{\star })-1}\mbox{ for all }%
n\geq n_{1}\}\supseteq \mathcal{B}.  \label{constant-regime-last-eq}
\end{equation}%
By standard Gaussian bounds the sequence $\mathbb{P}(|\xi _{i}|\geq
C_{0}^{-1}(1-\alpha /r^{\star })\alpha ^{i})$ is summable, hence by
independence $\mathbb{P}(\mathcal{B})>0$. Therefore with (\ref%
{constant-regime-last-eq}) we get for a constant $c^{\prime \prime }>0$%
\begin{equation*}
\mathbb{P}(|X_{n}^{\star }|\leq (r^{\star })^{n}n^{m(r^{\star })-1}%
\mbox{ for
all }n\geq n_{1})\geq \mathbb{P}(\mathcal{B})>0.
\end{equation*}%
Combined with \eqref{eq-truncate-good}, by independence of $X_{n}^{\star }$
and $\widehat{X}_{n}$ the proof is completed.
\end{proof}

\section{Exponential decay: Theorem~\protect\ref{thm-1} (b)\label%
{section-exponential}}

We start by an easy exponential lower bound.

\begin{lemma}
\label{lem-exponential-lower} For $(X_{n})$ \textsc{\lowercase{AR}}$(Q)$,
there exists $0<c<1$ such that $p_{N}\geq c^{N}$ for all $N\in \mathbb{N}$.
\end{lemma}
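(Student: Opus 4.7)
The plan is to lower bound $p_N$ by the probability of the explicit sub-event
\[
E_N := \{X_n \in [0,1] \text{ for all } 0 \le n < N\} \subseteq \Omega_N^+,
\]
and then to show $\mathbb{P}(E_N) \ge c^N$ by a chain of one-step conditional Gaussian estimates. The guiding observation is that as long as we insist $X_n \in [0,1]$ uniformly, the drift term in the recursion $X_n = \sum_{j=1}^L a_j X_{n-j} + \xi_n$ lives in a fixed compact interval, on which the standard Gaussian density is uniformly positive; thus each step costs at most a multiplicative constant.

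More precisely, I would set $A := \max_{1 \le j \le L} |a_j|$, $M := AL + 1$, and $c := \varphi(M) > 0$, where $\varphi$ is the standard normal density. Let $\cF_{n-1} := \sigma(\xi_0,\ldots,\xi_{n-1})$ and $m_n := \sum_{j=1}^L a_j X_{n-j}$. Then the event $\{X_n \in [0,1]\}$ is precisely $\{\xi_n \in [-m_n,\, 1-m_n]\}$. Using the zero initial conditions $X_{-L}=\cdots=X_{-1}=0$ together with the inductive assumption that $X_0,\ldots,X_{n-1} \in [0,1]$, one gets $|m_n| \le AL$, so the interval $[-m_n,\, 1-m_n]$ has length one and is contained in $[-M,M]$. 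Since $\varphi \ge c$ on $[-M,M]$ and $\xi_n$ is independent of $\cF_{n-1}$,
\[
\mathbb{P}\bigl(X_n \in [0,1] \,\big|\, \cF_{n-1}\bigr) \ge c \qquad \text{a.s.\ on } \bigcap_{k<n}\{X_k \in [0,1]\}.
\]

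Iterating this bound from $n=0$ through $n=N-1$ yields $\mathbb{P}(E_N) \ge c^N$, and consequently $p_N \ge c^N$, which is the desired conclusion. I do not anticipate any real obstacle: the whole argument is a short Gaussian computation. The only book-keeping is that for $n < L$ some of the $X_{n-j}$'s are the deterministic initial value $0 \in [0,1]$, which only reinforces the inequality $|m_n| \le AL$, so the induction starts cleanly at $n=0$.
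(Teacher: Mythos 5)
Your proposal is essentially the paper's own proof: condition on the process having stayed in $[0,1]$ so far, observe that the conditional mean of the next step is bounded by $\sum_j |a_j|$ while the conditional variance is one, conclude a uniform one-step lower bound $c$, and iterate. The only difference is that you spell out the explicit constant $c=\varphi(AL+1)$ and the bookkeeping at small $n$, which the paper leaves implicit.
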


\begin{proof}
Conditioned on $\{0\leq X_{i}\leq 1\mbox{
for all }1\leq i\leq n\}$, the Gaussian variable $X_{n+1}$ has mean $\mu $
with $|\mu |\leq \sum_{i=1}^{L}|a_{i}|$ and variance 1. Therefore,%
\begin{equation*}
\mathbb{P}(0\leq X_{n+1}\leq 1\mid \{0\leq X_{i}\leq 1\mbox{ for all }1\leq
i\leq n\})\geq c
\end{equation*}%
for a constant $c>0$. Recursively applying this inequality yields the
desired lower bound.
\end{proof}

Briefly, there are two forces resulting in an exponentially persistence
probability upper bound: negative dependence with archetype $%
X_{n}=-X_{n-1}+\xi _{n}$, which we handle in Lemma~\ref{lem-sub-negative};
and almost independence with archetype $X_{n}=\xi _{n}$, which we deal with
in Corollary~\ref{cor-almost-independence-exponential}.

\subsection{Negative dependence: $r^{\star }>1$ and $m(r^{\star })=0$
\label{subsection-negative-dependence}}

For a polynomial
\begin{equation*}
P(z)=\sum_{i=0}^{L}c_{i}z^{i},
\end{equation*}
we define the operation $P$ on a sequence $\mathbf{x}=(x_{n})$ by $P(\mathbf{%
x})_{n}=\sum_{i=0}^{L}c_{i}x_{n-L+i}$. We claim that $(P_{1}P_{2})(\mathbf{x}%
)=P_{1}(P_{2}(\mathbf{x}))$, since if we denote $P_{j}(z)=%
\sum_{i=0}^{L_{j}}c_{ji}z^{i}$ for $j=1,2$, then it is straightforward to
verify that the coefficients of $x_{n-L+i}$ in $(P_{1}P_{2})(\mathbf{x})_{n}$
and in $P_{1}(z)P_{2}(z)$ are same for all $0\leq i\leq L_{1}+L_{2}$. We say
a polynomial is a non-negative polynomial if it has non-negative
coefficients for each term. Now we are ready to prove the main result in
this subsection.

\begin{lemma}
\label{lem-sub-negative} Let $(X_{n})$ be \textsc{\lowercase{AR}}($Q$) for $%
Q(\cdot )$ with $r^{\star }>1$ and $m(r^{\star })=0$. Then, $p_{N}\leq C%
\mathrm{e}^{-cN}$ for some $c>0,C<\infty $ and all $N\in \mathbb{N}$.
\end{lemma}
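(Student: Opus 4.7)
The plan is to use the polynomial-operator framework introduced just above the lemma, namely to construct a non-negative polynomial $P$ such that the product $PQ$ is itself a non-negative polynomial. This will sandwich the persistence event $\{X_n\ge 0 \text{ for all } n\}$ inside the event that the $(\deg P)$-dependent Gaussian moving average $P(\xi)_n$ stays non-negative, which then decays exponentially via independence along a sparse sampling grid.

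Constructing such a $P$ rests on P\'olya's classical multiplier theorem: if a real polynomial $R$ is strictly positive on $[0,\infty)$, then $(1+z)^M R(z)$ has non-negative coefficients for all large enough $M$. Since $m(r^{\star})=0$, the factor $Q_U(z):=\prod_{\lambda\in\Lambda^{\star}}(z-\lambda)^{m(\lambda)}$ in the factorization $Q=Q_U Q_S$ has only non-real zeros, hence $Q_U(x)>0$ for every $x>0$, so P\'olya produces $P:=(1+z)^M$ making $PQ_U$ a non-negative polynomial. Setting $W_n:=(PQ_U)(X)_n$, the non-negativity of the coefficients of $PQ_U$ forces $W_n\ge 0$ on the persistence event (for $n\ge\deg(PQ_U)$), while commutativity of polynomial operators simultaneously gives $Q_S(W)_n=(PQ)(X)_n=P(\xi)_n=:\eta_n$, so $W$ satisfies an ARMA recursion with AR part $Q_S$ driven by the $(\deg P + 1)$-range Gaussian moving average $\eta$.

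In the clean subcase $Q=Q_U$ (no stable factor), $W=\eta$ is itself the desired moving average, and sampling along the sparse grid $\{k(\deg P+1):k\ge 0\}$ yields mutually independent centered Gaussians, each non-negative with probability $1/2$, giving $p_N\le 2^{-\lfloor N/(\deg P+1)\rfloor}$. The main obstacle is the general case with non-trivial $Q_S$: P\'olya cannot be applied to $Q$ itself when $Q$ has positive real zeros of sub-maximal modulus, so one cannot obtain a non-negative polynomial $PQ$ in one step---any attempt to force $PQ$ to be non-negative at such a root would have to make the root a common zero of $P$, but then substituting that root into $PQ$ shows $PQ$ cannot have all non-negative coefficients unless it is trivial. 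I would therefore iterate the critical-factor removal on $Q_S$, whose spectral radius is strictly smaller than $r^{\star}$; in each iteration the effective driving noise remains a finite-range centered Gaussian moving average (so the independence-at-lag argument is preserved), and after finitely many stages one lands either in another instance of the present lemma with strictly smaller state dimension, or in the stable regime where the classical quasi-stationary arguments cited for the other half of part (b) of Theorem \ref{thm-1} provide the required exponential bound.
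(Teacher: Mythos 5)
Your opening move is the paper's too: a P\'olya-type positive multiplier producing a non-negative polynomial $P$ is exactly what Corollary \ref{cor-sub-negative} (built on Lemma \ref{lemma-polynomial}) supplies, and your sparse-sampling argument for the moving average is the right endgame. But the object you apply the polynomial operator to is wrong, and the ``iteration'' you invoke to patch the general case does not work.

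You push $X$ itself through $PQ_U$, obtaining $W_n:=(PQ_U)(X)_n$, an ARMA process with AR factor $Q_S$ and driver $\eta_n=P(\xi)_n$. The inclusion $\{X_n\ge 0:n\le N\}\subseteq\{W_n\ge 0:n\le N\}$ is then useless exactly when $Q_S$ has a positive real zero of modulus greater than $1$, which your hypotheses do not exclude. Take, e.g., $Q(z)=(z^2+4)\,(z-\tfrac32)$: here $r^\star=2$, $\Lambda^\star=\{2i,-2i\}$, $m(r^\star)=0$, so the lemma applies and $p_N$ decays exponentially; yet $Q_S(z)=z-\tfrac32$, so $W_n=\tfrac32\,W_{n-1}+\eta_n$ beyond a fixed boundary window. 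With positive probability $W$ becomes large and then the recursion keeps it positive for all time, so $\mathbb{P}(W_n\ge 0:n\le N)$ is bounded away from zero and your upper bound on $p_N$ is vacuous. Consequently the recursion you describe does not ``land in another instance of the present lemma or in the stable regime'': it can land in the constant regime of Theorem \ref{thm-1}(a). On top of this, the derived process $W$ is driven by a dependent moving average rather than i.i.d.\ Gaussians, so neither the present lemma nor the stable-AR bounds apply to it as stated.

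The paper sidesteps this by never applying the operator to $X$ (or to any process whose AR factor might contain a positive root). Using Lemma \ref{lem-general-regression-coefficients} it writes $X_n=Y_n+Z_n$ with $Y_n=\sum_{\lambda\in\Lambda^\star}Y_{n,\lambda}$ collecting only the top-modulus modes and $\var(Z_n)\le C r^{2n}$ for some $r<r^\star$; since $\var(Y_n)\ge C_1(r^\star)^{2n}$ dominates, the events $\{|Y_n|\le\widetilde r^{\,n}\}$ or $\{Z_n\ge\widetilde r^{\,n}\}$ for some $n\in[N/2,N]$ are exponentially unlikely, so persistence of $X$ effectively forces persistence of $Y$ on $[N/2,N]$. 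The non-negative multiplier $P$ is then applied to the $Y$-components, whose generating factors are exactly the top-modulus, non-positive zeros, so a genuine non-negative $PQ_U$ exists \emph{and} the output $P(\mathbf{Y})_n$ is a finite-range moving average in the $\xi$'s with no residual AR part. The variance-dominance step that throws away the stable (possibly positively-rooted) component \emph{before} invoking P\'olya is the missing piece in your proposal.
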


\begin{proof}
For $\lambda \in \Lambda ^{\star }$, let $\mathbf{Y}_{\lambda
}=(Y_{n,\lambda })$ be an auto-regressive process satisfying%
\begin{equation*}
P_{\lambda ,m(\lambda )}(\mathbf{Y}_{\lambda
})_{n}=\sum\limits_{k=0}^{d(\lambda )}\gamma _{\lambda ,k}\xi _{n-k},
\end{equation*}%
where%
\begin{equation*}
\text{ for }\lambda \in \Lambda ^{\star }\backslash \{-r^{\star }\}\text{, }%
P_{\lambda ,m(\lambda )}:=(z-\lambda )^{m(\lambda )}(z-\bar{\lambda}%
)^{m(\lambda )},d(\lambda ):=2m(\lambda )-1\text{,}
\end{equation*}%
\begin{equation*}
\text{for }\lambda =-r^{\star }\text{ (if }-r^{\star }\in \Lambda \text{), }%
P_{-r^{\star },m(-r^{\star })}:=(z+r^{\star })^{m(-r^{\star })},d(\lambda
):=m(-r^{\star })-1,
\end{equation*}%
and $\gamma _{\lambda ,k}$'s are coefficients to be determined later. By
Lemma \ref{lem-general-regression-coefficients}, we can choose proper $%
\gamma _{\lambda ,k}$'s such that%
\begin{equation*}
Y_{n,\lambda }=\sum\limits_{i=0}^{n}\widetilde{b}_{\lambda ,n,i}\xi _{i},%
\text{ }\widetilde{b}_{\lambda ,n,i}=\sum\limits_{j=0}^{m(\lambda
)-1}a_{\lambda ,j}\left\vert \lambda \right\vert ^{n-i}(n-i)^{k}\cos
((n-i)\theta _{\lambda }+\theta _{\lambda ,j}).
\end{equation*}%
Compared with Lemma~\ref{lem-general-regression-coefficients}, we could
write $X_{n}=Y_{n}+Z_{n}$ where $Y_{n}=\sum_{\lambda \in \Lambda ^{\star
}}Y_{n,\lambda }$ and $\var(Z_{n})\leq Cr^{2n}$ for constants $C<\infty $
and $1<r<r^{\star }$. We first claim that%
\begin{equation}
\mathbb{P}(Y_{n}\geq 0\mbox{ for all }N/2\leq n\leq N)\leq \mathrm{e}%
^{-cN}\,,\mbox{ for a constant }c>0\,.  \label{eq-persistence-Y_n}
\end{equation}%
To this end, we apply Corollary~\ref{cor-sub-negative} and deduce that for
each $\lambda $, there exists a polynomial $P_{\lambda ,m(\lambda )}^{\prime
}$ such that $P_{\lambda ,m(\lambda )}P_{\lambda ,m(\lambda )}^{\prime }$ is
non-negative. We can further request that for all $\lambda $, $P_{\lambda
,m(\lambda )}P_{\lambda ,m(\lambda )}^{\prime }$'s equal to the same
non-negative polynomial $P$, because if this doesn't hold, we can consider
the following polynomials which achieve this request,%
\begin{equation*}
\widetilde{P}_{\lambda ,m(\lambda )}^{\prime }=P_{\lambda ,m(\lambda
)}^{\prime }\prod\limits_{\lambda ^{\prime }\in \Lambda ^{\star }\backslash
\{\lambda \}}P_{\lambda ^{\prime },m(\lambda ^{\prime })}P_{\lambda ^{\prime
},m(\lambda ^{\prime })}^{\prime }.
\end{equation*}%
Write $\mathbf{Y}=(Y_{n})$. On the event $\{Y_{n}>0:n\in \lbrack N]\}$, we
have%
\begin{equation}
0<P(\mathbf{Y})_{n}=\sum_{\lambda \in \Lambda ^{\star }}P_{\lambda
,m(\lambda )}^{\prime }(P_{\lambda ,m(\lambda )}(\mathbf{Y}_{\lambda }))_{n}.
\label{eq-bi-not-all-zero}
\end{equation}%
Notice that $\sum_{\lambda \in \Lambda ^{\star }}P_{\lambda ,m(\lambda
)}^{\prime }(P_{\lambda ,m(\lambda )}(\mathbf{Y}_{\lambda
}))_{n}=\sum_{i=0}^{K}c_{i}\xi _{n-i}$ for some $K$ and $c_{i}$'s, it
follows that there exist $\{b_{i}:i=1,\ldots ,K\}$ such that%
\begin{equation*}
\zeta _{j}:=\sum_{i=j(K+1)}^{j(K+1)+K}b_{i}\xi _{jK+i}>0\text{ on }%
\{Y_{n}>0:n\in \lbrack N]\}
\end{equation*}%
for $j=\lceil N/2K\rceil ,\lceil N/2K\rceil +1,\ldots ,\lfloor N/K\rfloor $.
If $b_{i}$'s are all $0$, obviously the probability that $\zeta _{j}>0$ is $%
0 $ for all $j$. If $b_{i}$'s are not all $0$, since $\zeta _{j}$ are
i.i.d.\ centered Gaussian variables, it is easy to see that %
\eqref{eq-persistence-Y_n} holds. Choosing $\widetilde{r}$ with $r<%
\widetilde{r}<r^{\star }$, then the fact that $\var(Y_{n})\geq
C_{1}(r^{\star })^{2n}$ for some $C_{1}>0$ and $\var(Z_{n})\leq Cr^{2n}$
implies that for some constants $c_{0}$ and $C_{0}$ $>0$%
\begin{equation}
\sum_{N/2\leq n\leq N}\left[ (\mathbb{P}(|Y_{n}|\leq \widetilde{r}^{n})+%
\mathbb{P}(Z_{n}\geq \widetilde{r}^{n})\right] \leq C_{0}\mathrm{e}%
^{-c_{0}N}\,.  \label{bound-Yn-Zn}
\end{equation}%
Combining now (\ref{eq-persistence-Y_n}), (\ref{bound-Yn-Zn}) and {\small 
\begin{equation*}
\Omega _{N}^{+}\subset \left\{ \bigcap\limits_{n=N/2}^{N}\{Y_{n}\geq
0\}\right\} \bigcup \left\{ \bigcup_{n=N/2}^{N}\{|Y_{n}|\leq (\widetilde{r}%
^{n})\}\right\} \bigcup \left\{ \bigcup_{n=N/2}^{N}\{Z_{n}\geq (\widetilde{r}%
^{n})\}\right\}
\end{equation*}%
}completes the proof of the lemma.
\end{proof}

\subsection{Nearly independence: $r^{\star }<1$ or $r^{\star }=1$, $%
m(r^{\star })=0$\label{subsection-nearly-independence}}

\begin{lemma}\label{lem-almost-independence-exponential} 
Let $\{Z_{n}:n=1,\ldots ,N\}$ be
a mean zero Gaussian process with non-negative correlation coefficients 
$\{\rho _{i,j}\}_{1\leq i,j\leq N}$ such that $\sum_{i,j}^{N}\rho _{i,j}\leq
\Delta N$. Then there exists a constant $C=C(\Delta )<\infty $ such that
\begin{equation*}
\mathbb{P}(Z_{n}\geq 0\mbox{ for all }1\leq n\leq N)\leq C\mathrm{e}
^{-N/200\Delta }\,.
\end{equation*}
\end{lemma}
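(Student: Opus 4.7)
My plan is to combine Pitt's Gaussian \abbr{FKG} inequality with a moment bound on the linear statistic $S := \sum_{n=1}^N Z_n$. First I normalize: replacing each $Z_n$ by $Z_n/\sqrt{\var(Z_n)}$ preserves $\Omega_N^+$, the correlations $\rho_{i,j}$, and the hypothesis $\sum_{i,j}\rho_{i,j} \leq \Delta N$, so \abbr{WLOG} $\var(Z_n) = 1$ for all $n$. Then $S \sim N(0,\sigma^2)$ with $\sigma^2 \leq \Delta N$, and the standard centered Gaussian moment formula gives $\E[S^{2k}] = (2k-1)!!\, \sigma^{2k} \leq (2k \Delta N)^k$ for every $k \in \N$.

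The crucial input is a lower bound $\E[S \mid \Omega_N^+] \gtrsim N$, which I would derive from Pitt's inequality (Gaussian \abbr{FKG} for non-negatively-correlated covariance). Applied with $f(z) = z_n^+ := \max(z_n, 0)$ and $g(z) = \mathbf{1}_{\Omega_N^+}$, both of which are coordinatewise non-decreasing, it gives $\E[Z_n^+ \mathbf{1}_{\Omega_N^+}] \ge \E[Z_n^+]\,\P(\Omega_N^+) = \P(\Omega_N^+)/\sqrt{2\pi}$. Since $Z_n^+ = Z_n$ on $\Omega_N^+$, summing over $n$ yields $\E[S \mid \Omega_N^+] \ge N/\sqrt{2\pi}$. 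As $S \ge 0$ on $\Omega_N^+$, conditional Jensen on the convex map $x \mapsto x^{2k}$ gives $\E[S^{2k} \mid \Omega_N^+] \ge (N/\sqrt{2\pi})^{2k}$. Multiplying by $\P(\Omega_N^+)$ and invoking $\E[S^{2k}\mathbf{1}_{\Omega_N^+}] \le \E[S^{2k}]$,
\[
\P(\Omega_N^+) \le \frac{(2k-1)!!\, \sigma^{2k}}{(N/\sqrt{2\pi})^{2k}} \le \left(\frac{4\pi k \Delta}{N}\right)^{\!k} \,.
\]
Choosing the integer $k \approx N/(4\pi e \Delta)$ then gives $\P(\Omega_N^+) \le \exp(-N/(4\pi e\,\Delta))$, which is well within the claimed bound since $4\pi e < 200$; the small-$N$ cases where this $k$ degenerates are absorbed into $C(\Delta)$.

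The hard step is the Pitt inequality: it is what converts the hypothesis $\sum \rho_{i,j} \le \Delta N$ into a quantitative positive lower bound on $\E[S \mid \Omega_N^+]$ of the right order $N$. Without such an \abbr{FKG}-type device the centered variable $S$ could in principle be concentrated near zero on $\Omega_N^+$, and the moment argument would collapse. The remaining steps -- normalization, the Gaussian moment formula, conditional Jensen, and the one-parameter optimization in $k$ -- are routine.
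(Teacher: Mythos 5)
Your proof is correct, and its second half is a genuinely different argument from the paper's. Both proofs open the same way: use non-negative correlations to lower bound each $\mathbb{E}[Z_n\mid\Omega_N^+]$ by a universal constant, hence $\mathbb{E}[S\mid\Omega_N^+]\gtrsim N$. You invoke Pitt's Gaussian FKG inequality with $f=z_n^+$ and $g=\mathbf{1}_{\Omega_N^+}$ to get $\mathbb{E}[Z_n\mid\Omega_N^+]\ge 1/\sqrt{2\pi}$; the paper reaches the slightly better $\sqrt{2/\pi}$ by the more elementary observation that $z\mapsto\mathbb{P}(\Omega_N^+\mid Z_i=z)$ is non-decreasing on $[0,\infty)$ (the conditional means of the other coordinates move up as $z$ grows when $\rho_{ij}\ge 0$) and by comparing with $\mathbb{E}[Z_i\mid Z_i\ge 0]$ --- the lost factor of $2$ is immaterial against the stated constant $200$. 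After that the two routes diverge. The paper writes $\mathbb{P}(\Omega_N^+)\le\mathbb{P}(S_N\ge N/10)/\mathbb{P}(S_N\ge N/10\mid\Omega_N^+)$, bounds the numerator by a single Gaussian tail estimate, and lower-bounds the denominator by a truncation argument that splits the range of $S_N$ at $N/10$ and $10\sqrt{\Delta}N$ and further invokes the crude FKG lower bound $\mathbb{P}(\Omega_N^+)\ge 2^{-N}$ to control the far tail. You instead combine conditional Jensen for $x\mapsto x^{2k}$ with the Gaussian moment bound $\mathbb{E}[S^{2k}]\le(2k\Delta N)^k$ and optimize $k\approx N/(4\pi e\Delta)$, arriving at the same $\mathrm{e}^{-cN/\Delta}$ rate in one stroke, with no truncation and no secondary FKG appeal. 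The normalization step, the integer-$k$ rounding, and the small-$N$ edge cases are handled correctly; overall your finish is shorter and easier to check than the paper's.
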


\begin{proof}
\abbr{WLOG} we assume that $\var(Z_{i})=1$. For convenience
of notation, denote by $\mathcal{A}_{N}:=\{Z_{n}\geq 0\mbox{ for all }1\leq
n\leq N\}$. Clearly $\Gamma _{i}(z):=\mathbb{P}(\mathcal{A}_{N}\mid Z_{i}=z)-%
\mathbb{P}(\mathcal{A}_{N})/\mathbb{P}(Z_{i}\geq 0)$ is an increasing
function on $z$ in $[0,\infty )$, and $\int_{0}^{\infty }\Gamma
_{i}(z)p_{Z_{i}}(z)dz=0$ (since $\mathcal{A}_{N}\mathcal{\subset }\left\{
Z_{i}\geq 0\right\} $). Hence%
\begin{equation}
0\leq \int_{0}^{\infty }z\Gamma _{i}(z)p_{Z_{i}}(z)dz=\mathbb{P}(\mathcal{A}%
_{N})\left( {\mathbb{E}}\left[ Z_{i}\mid \mathcal{A}_{N}\right] -{\mathbb{E}}%
\left[ Z_{i}\mid Z_{i}\geq 0\right] \right) .
\label{conditional_increasing_1}
\end{equation}%
Noting that ${\mathbb{E}}\left[ Z_{i}\mid Z_{i}\geq 0\right] =\sqrt{2/\pi }$%
, we deduce from (\ref{conditional_increasing_1}) that%
\begin{equation}
{\mathbb{E}}\left[ Z_{i}\mid \mathcal{A}_{N}\right] \geq \sqrt{2/\pi }\,.
\label{eq-conditional-exp-A}
\end{equation}%
By assumption $S_{N}=\sum_{i=1}^{N}Z_{i}$ is a centered Gaussian with Var$%
(S_{N})=\sum_{i,j}\rho _{i,j}\leq \Delta N$. Hence, recalling that for $G$
standard Gaussian $\mathbb{P}(G\geq x)\leq \exp (-x^{2}/2)$, we have%
\begin{equation*}
\mathbb{P}(S_{N}\geq 10^{-1}N)\leq e^{-N/(200\Delta )}.
\end{equation*}%
Noting that%
\begin{equation*}
\mathbb{P(\mathcal{A}}_{N}\mathbb{)}\leq \frac{\mathbb{P}(S_{N}\geq 10^{-1}N)%
}{\mathbb{P}(S_{N}\geq 10^{-1}N\mid \mathcal{A}_{N})},
\end{equation*}%
it thus suffices to show that%
\begin{equation}
\mathbb{P}(S_{N}\geq 10^{-1}N\mid \mathcal{A}_{N})\geq 1/(100\sqrt{\Delta }).
\label{eq-SN-condition-A}
\end{equation}%
To this end, by FKG we know that $\mathbb{P(\mathcal{A}}_{N})\mathbb{\geq }%
2^{-N}$. Hence, recalling that $\upsilon _{N}:=$Var$(S_{N})\leq \Delta N$,
we get by simple Gaussian calculation that%
\begin{equation}
{\mathbb{E}}\left[ S_{N}1_{S_{N}\geq 10\sqrt{\Delta }N}\mid \mathcal{A}_{N}%
\right] \leq 2^{N}{\mathbb{E}}\left[ S_{N}1_{S_{N}\geq 10\sqrt{\Delta }N}%
\right] \leq 2^{N}\frac{\sqrt{\upsilon _{N}}}{\sqrt{2\pi }}e^{-\frac{(10%
\sqrt{\Delta }N)^{2}}{2\upsilon _{N}}}\leq e^{-N}.
\label{eq-expectation-SN-condition-A}
\end{equation}%
Further, by (\ref{eq-conditional-exp-A}),%
\begin{equation}
\sqrt{\frac{2}{\pi }}N\leq {\mathbb{E}}\left[ S_{N}\mid \mathcal{A}_{N}%
\right] .  \label{eq-recall-conditional-exp-A}
\end{equation}%
Breaking the range of $S_{N}$ into $[0,10^{-1}N)$, $[10^{-1}N,10\sqrt{\Delta 
}N)$ and $[10\sqrt{\Delta }N,\infty )$, we deduce that%
\begin{equation*}
{\mathbb{E}}\left[ S_{N}\mid \mathcal{A}_{N}\right] \leq 10^{-1}N+10\sqrt{%
\Delta }N\mathbb{P}(S_{N}\geq 10^{-1}N\mid \mathcal{A}_{N})+{\mathbb{E}}%
\left[ S_{N}1_{S_{N}\geq 10\sqrt{\Delta }N}\mid \mathcal{A}_{N}\right] .
\end{equation*}%
In view of (\ref{eq-expectation-SN-condition-A}) and (\ref%
{eq-recall-conditional-exp-A}) yields (\ref{eq-SN-condition-A}).
\end{proof}

We record here the following comparison result of Slepian \cite{slepian62},
which will be used repeatedly.

\noindent \textbf{Slepian's Lemma} \label{thm-slepian} Let $\{U_{i}:1\leq
i\leq n\}$ and $\{V_{i}:1\leq i\leq n\}$ be centered zero Gaussian variables
with%
\begin{equation}
\var(U_{i})=\var(V_{i}),\mbox{ and }\mathrm{\Cov}(U_{i},U_{j})\leq \mathrm{%
\Cov}(V_{i},V_{j})\mbox{ for all }1\leq i,j\leq n\,.  \label{eq-assumption}
\end{equation}%
Then for all real numbers $\lambda _{1},\ldots ,\lambda _{n}$,%
\begin{equation*}
\mathbb{P}(U_{i}\leq \lambda _{i}\mbox{ for all }1\leq i\leq n)\leq \mathbb{P%
}(V_{i}\leq \lambda _{i}\mbox{ for all }1\leq i\leq n)\,.
\end{equation*}

\begin{cor}
\label{cor-almost-independence-exponential} If $r^{\star }<1$, or $r^{\star
}=1$ with $m(r^{\star })=0$, then there exist constants $c,C>0$ such that $%
p_{N}\leq C\mathrm{e}^{-cN}$ for all $N\in \mathbb{N}$.
\end{cor}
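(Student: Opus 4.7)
The plan is to handle the two sub-cases separately, reducing each to Lemma~\ref{lem-almost-independence-exponential}. In the stable case $r^{\star}<1$ I use Slepian's lemma to pass to a Gaussian process with non-negative, exponentially decaying correlations; in the borderline case $r^{\star}=1$, $m(r^{\star})=0$ I adapt the argument of Lemma~\ref{lem-sub-negative}.

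\textit{Stable case.} By Lemma~\ref{lem-general-regression-coefficients}(b), for some $\rho_{0}\in(r^{\star},1)$ one has $|h_{\ell}|\leq C\rho_{0}^{\ell}$. From $\cov(X_n,X_m)=\sum_{j=0}^{\min(n,m)}h_{n-j}h_{m-j}$ one then gets $|\cov(X_{n},X_{m})|\leq C'\rho_{0}^{|n-m|}$, while $\sigma_{n}^{2}:=\var(X_{n})\in[1,\gamma_{0}]$ with $\gamma_{0}:=\sum_{\ell\geq 0}h_{\ell}^{2}<\infty$ (using $h_0=1$). Fix $K$ so large that $\tilde\rho:=\max(C',1)\,\rho_{0}^{K}<1$ and set $S:=\{jK:1\leq j\leq\lfloor N/K\rfloor\}$. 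The normalized variables $\tilde X_{n}:=X_{n}/\sigma_{n}$ have unit variance and satisfy $\cov(\tilde X_{iK},\tilde X_{jK})\leq\tilde\rho^{|i-j|}$ for all $i,j$. Slepian's Lemma, applied to $-\tilde X$ and $-V$ where $V$ is the stationary unit-variance AR(1) process with correlation parameter $\tilde\rho$, gives
\[
p_{N}\leq\mathbb{P}\bigl(\tilde X_{jK}\geq 0\text{ for all }1\leq j\leq|S|\bigr)\leq\mathbb{P}\bigl(V_{j}\geq 0\text{ for all }1\leq j\leq|S|\bigr).
\]
The covariances of $V$ are non-negative and sum to at most $2|S|/(1-\tilde\rho)=:\Delta|S|$, so Lemma~\ref{lem-almost-independence-exponential} yields $\mathbb{P}(V_{j}\geq 0\text{ for all }j)\leq C\exp(-|S|/(200\Delta))\leq C\exp(-cN/K)$.

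\textit{Borderline case.} The proof of Lemma~\ref{lem-sub-negative} carries over. Decompose $X_{n}=Y_{n}+Z_{n}$ with $Y_{n}=\sum_{\lambda\in\Lambda^{\star}}Y_{n,\lambda}$ capturing the unit-modulus oscillatory modes (all with $\lambda\neq 1$, since $m(1)=0$) and $Z_{n}$ the strictly stable modes. Then $\var(Z_{n})\leq Cr^{2n}$ for some $r<1$, while $\var(Y_{n})\geq c_{1}n^{2m^{\star}-1}$. The same non-negative-polynomial construction produces $\Omega(N)$ independent constant-probability obstruction events for $\{Y_{n}\geq 0:N/2\leq n\leq N\}$, yielding $\mathbb{P}(Y_{n}\geq 0\text{ for all }N/2\leq n\leq N)\leq e^{-cN}$. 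The error events $\{|Y_{n}|\leq\tilde r^{n}\}\cup\{|Z_{n}|\geq\tilde r^{n}\}$ with $\tilde r\in(r,1)$ have exponentially small probabilities summable over $n\in[N/2,N]$, and the containment argument of Lemma~\ref{lem-sub-negative} completes the proof.

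The main obstacle is in the stable case, where $\var(X_n)$ is not constant and the raw correlations of $X$ need not be non-negative: normalizing to unit variance and subsampling at a large constant spacing $K$ simultaneously produces, via Slepian's lemma, a comparison process satisfying both hypotheses of Lemma~\ref{lem-almost-independence-exponential}, at the price of a factor $1/K$ in the exponential rate.
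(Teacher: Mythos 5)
Your proof is correct, but it takes a genuinely different route from the paper's. The paper handles both sub-cases uniformly: it factors $Q=Q_1Q_2$ with $Q_2$ collecting the positive real zeros (all of modulus $<1$ under either hypothesis) and $Q_1$ the remaining zeros, views $(X_n)$ as an \textsc{\lowercase{AR}}$(Q_1)$ process driven by the exponentially-mixing noise $\zeta_n$ generated by $Q_2$, and then applies the non-negative-polynomial cancellation argument of Lemma~\ref{lem-sub-negative} to $Q_1$ to produce blocks $\chi_j$ of $\zeta$'s with exponentially decaying correlation, followed by positive-definitization, Slepian, and Lemma~\ref{lem-almost-independence-exponential}. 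You instead split by sub-case. In the stable case $r^\star<1$ you bypass the $Q_1Q_2$ factorization and the cancellation machinery entirely: the impulse response already satisfies $|h_\ell|\leq C\rho_0^\ell$, so $\cov(X_n,X_m)$ decays exponentially on its own; normalizing and subsampling at spacing $K$ then gives a direct Slepian comparison with a stationary AR$(1)$ process that meets the hypotheses of Lemma~\ref{lem-almost-independence-exponential}. This is cleaner and more elementary for that sub-case, though it sacrifices the constant in the exponent by a factor $1/K$. In the borderline case you essentially rerun the proof of Lemma~\ref{lem-sub-negative} verbatim, noting that its argument never used $r^\star>1$ beyond making $\var(Y_n)\geq c_1$ and $\var(Z_n)\leq Cr^{2n}$ with $r<1$; both survive at $r^\star=1$ with $\Lambda^\star$ consisting only of non-positive unit-modulus zeros, so the containment $\Omega_N^+\subseteq\{\cap_n Y_n\geq0\}\cup\{\cup_n|Y_n|\leq\tilde r^n\}\cup\{\cup_n Z_n\geq\tilde r^n\}$ and the subsequent tail bounds hold, and Corollary~\ref{cor-sub-negative} still applies since no $\lambda\in\Lambda^\star$ is a positive real. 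Both proofs are valid; the paper's buys uniformity across the two hypotheses at the cost of the extra $Q_1Q_2$ bookkeeping, while yours is more transparent in the stable regime and makes explicit that Lemma~\ref{lem-sub-negative} already covers the critical circle once $1\notin\Lambda$.
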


\begin{proof}
First write $Q=Q_{1}Q_{2}$ where $Q_{1}$ has no positive zeroes and $Q_{2}$
has only positive zeroes. By Corollary~\ref{cor-sub-negative}, there exists
a non-negative polynomial $P_{1}$ such that $P_{1}Q_{1}$ is non-negative.
Now, let $(\zeta _{n})$ be an auto-regressive process generated by $Q_{2}$
and $(\xi _{n})$. Then $(X_{n})$ can be viewed as an auto-regressive process
generated by $Q_{1}$ and $(\zeta _{n})$, since $Q_{2}(Q_{1}(X_{n})_{n})_{n}=%
\xi _{n}$ and $Q_{2}(\zeta _{n})_{n}=\xi _{n}$ imply that(easy to verify the
boundary) $Q_{1}(X_{n})_{n}=\zeta _{n}$. Analogues to the proof of Lemma~\ref%
{lem-sub-negative}, there exist $k>0$ and $\{b_{i}:1\leq i\leq k\}$ such that%
\begin{equation}
\{X_{n}>0:1\leq n\leq N\}\,\subseteq \{\chi _{j}>0:1\leq j\leq \lbrack
N/k]\}\,,  \label{eq-Omega-chi}
\end{equation}%
where $\chi _{j}=\sum_{i=j(k+1)}^{j(k+1)+k}b_{i}\zeta _{jk+i}$. If $b_{i}$'s
are all $0$, then this corollary is automatically true by (\ref{eq-Omega-chi}%
). If $b_{i}$'s are not all $0$, since $Q_{2}$ has only positive zeroes
strictly less than 1, it is easy to verify that there exists constants $%
c<\infty $ and $0<r<1$ such that the correlation coefficients of $\{\chi
_{i}\}$ satisfy%
\begin{equation}
\left\vert \rho (\chi _{i},\chi _{j})\right\vert \leq cr^{\left\vert
i-j\right\vert }\text{ for all }i,j\text{.}
\label{correlation-bound-almost-independence-case}
\end{equation}%
Note that we can find a positive integer $k_{0}$ such that the matrix $%
\mathsf{A}%\mathbf{A}
_{n}:=(a_{ij})_{n\times n}$ with $a_{ij}=cr^{\left\vert i-j\right\vert
k_{0}} $ is positive definite for each $n$, thus we can construct a Gaussian
vector $(Z_{1}^{\prime },...,Z_{n}^{\prime })$ whose covariance matrix is $%
\mathsf{A}_{n}$. Since $0<r<1$, we can apply Lemma \ref%
{lem-almost-independence-exponential} to $(Z_{1}^{\prime },...,Z_{n}^{\prime
})$, and it completes the proof with (\ref%
{correlation-bound-almost-independence-case}) and Slepian's Lemma, by noting
that $\Omega _{N}^{+}\subseteq \{\chi _{jk_{0}}\geq 0:1\leq j\leq \lbrack
N/(k\times k_{0})]\}$.
\end{proof}

\section{Stretched exponential decay: Theorem \protect\ref{thm-1} (c)\label%
{section-stretched-exponential}}

Assume $\lambda _{0}=1$ with multiplicity $m:=m(1)$. Let $\Lambda =\Lambda
^{\star }\cup \{\widetilde{\lambda }_{1}|_{m(\widetilde{\lambda }_{1})},...,%
\widetilde{\lambda }_{q}|_{m(\widetilde{\lambda }_{q})}\}$ where $|%
\widetilde{\lambda }_{i}|<1$, and $\Lambda ^{\star }=\{\lambda
_{0}|_{m},\lambda _{1}|_{m_{1}},\ldots ,\lambda _{\ell }|_{m_{\ell }}\}$
where $\lambda _{j}=\mathrm{e}^{\sqrt{-1}\theta _{j}}$, such that $m^{\prime
}=\max_{j\geq 1}\{m_{j}\}>m$. For a Gaussian random variable $Z$ and $%
n_{1}\leq n_{2}$, we denote the contribution of $\{\xi _{n}:n_{1}\leq n\leq
n_{2}\}$ in $Z$ as%
\begin{equation}
Z(n_{1},n_{2}):={\mathbb{E}}(Z\mid \{\xi _{n}:n_{1}\leq n\leq n_{2}\}).
\label{def-Z-0,n}
\end{equation}%
For $M\in \mathbb{Z}^{+}$, denote by $(Z_{n,M})$ the \textsc{\lowercase{AR}}$%
((z-1)^{M})$, \textsc{\lowercase{IRW}} of order $M$, that is,%
\begin{equation}
Z_{n,M}:=\sum_{i=1}^{n}b_{n,i,M-1}\xi _{i}\text{,}  \label{eq-def-b-n-i-m}
\end{equation}%
where%
\begin{equation}
b_{n+1,i,M}:=b_{n,i,M}+b_{n+1,i,M-1}\text{ and }b_{n,i,0}:=1\text{ for all }%
i\leq n.  \label{eq-b-iterative}
\end{equation}%
For $j\in \{0,...,\ell \}$ and $k\in \lbrack m_{j}]-1$, define $\mathbf{%
\mathbb{\overrightarrow{\mathbb{T}}}}_{j,n,k}=[\mathbb{T}_{j,n,k},\mathbb{T}%
_{j,n,k}^{^{\prime }}]^{T}$, where%
\begin{equation}
\mathbb{T}_{j,n,k}:=\sum_{i=1}^{n}b_{n,i,k}\cos ((n-i)\theta _{j}+\theta
_{\lambda _{j},k})\xi _{i}\,\text{, }\mathbb{T}_{j,n,k}^{\prime
}:=\sum_{i=1}^{n}b_{n,i,k}\sin ((n-i)\theta _{j}+\theta _{\lambda
_{j},k})\xi _{i}.  \label{def-new-T}
\end{equation}%
Hereafter, $\theta _{j,k}=\theta _{\lambda _{j},k}$, and for $p\in \lbrack q]
$, $k\in \lbrack m(\widetilde{\lambda }_{p})]-1$, let%
\begin{equation}
Z_{p,n,k}:=\sum_{i=1}^{n}|\widetilde{\lambda }_{p}|^{n-i}b_{n,i,k}\cos
((n-i)\theta _{\widetilde{\lambda }_{p}}+\theta _{\widetilde{\lambda }%
_{p},k})\xi _{i}.  \label{def-Z-p-n-k}
\end{equation}%
The upper and lower bounds are established in Sections \ref%
{subsection-stretched-lower-bound} and \ref{subsection-stretched-upper-bound}%
, respectively.

\subsection{Upper bound\label{subsection-stretched-lower-bound}}

Comparing (\ref{def-new-T}), (\ref{def-Z-p-n-k}) with (\ref%
{lem-general-regression-coefficients}), we have the decomposition%
\begin{equation}
X_{n}=X_{n}^{(0)}+X_{n}^{(1)}+X_{n}^{(2)}:=\sum_{k=0}^{m-1}c_{0,k}\mathbb{T}%
_{0,n,k}+\sum_{j=1}^{\ell }\sum_{k=0}^{m_{j}-1}c_{j,k}\mathbb{T}%
_{j,n,k}+\sum_{p=1}^{q}\sum_{k=0}^{m(\widetilde{\lambda }_{q})-1}\widetilde{c%
}_{p,k}Z_{p,n,k},  \label{eq-Xn-expression-stretch}
\end{equation}%
for some real $c_{j,k}$, $\widetilde{c}_{p,k}\in \mathbb{R}$ where $%
c_{j_{1},k}=c_{j_{2},k}$ if $\theta _{j_{1}}=-\theta _{j_{2}}$ (and $%
\widetilde{c}_{p_{1},k}=\widetilde{c}_{p_{2},k}$ if $\widetilde{\theta }%
_{p_{1}}=-\widetilde{\theta }_{p_{2}}$). Set%
\begin{equation}
\gamma _{1}=m-1/2+\alpha \text{, }\alpha =\frac{m^{\prime }-m}{2m^{\prime }}.
\label{def-gamma-1}
\end{equation}%
Define the event ${\tiny \Xi }_{N}{\tiny :=\Xi }_{N}^{(0)}\cap {\tiny \Xi }%
_{N}^{(1)}$ where%
\begin{eqnarray*}
{\tiny \Xi }_{N}^{(0)} &:&=\bigcap_{n=N/2}^{N}{\tiny \{|X}_{n}^{(0)}{\tiny %
|\leq N}^{\gamma _{1}}{\tiny (}\log {\tiny N)}^{-2}{\tiny \}}\bigcap
\bigcap_{n=1}^{N}{\tiny \{|\xi }_{n}{\tiny |\leq }\sqrt{N}{\tiny \},} \\
{\tiny \Xi }_{N}^{(1)} &:&=\bigcup_{n=N/2}^{3N/4}{\tiny \{X_{n}^{(1)}\leq -}%
N^{\gamma _{1}}{\tiny \}\,.}
\end{eqnarray*}%
Note that under $\Xi _{N}$, ${\tiny |\xi }_{n}{\tiny |\leq }\sqrt{N}$, by (%
\ref{def-Z-p-n-k}), (\ref{eq-Xn-expression-stretch}), (\ref{def-gamma-1})
and the fact that $|\widetilde{\lambda }_{i}|<1$, we have that $%
\max_{n=N/2}^{N}\{X_{n}^{(2)}\}=o(N^{\gamma _{1}})$, hence $\Omega
_{N}^{+}\subseteq (\Xi _{N})^{c}$ for all $N$ large enough, so it suffices
to prove%
\begin{equation}
\mathbb{P}({\tiny \Xi }_{N}^{(0)})\leq \mathrm{e}^{-N^{2\alpha +o(1)}}\,
\label{bound-stretch-prob-t-0-n}
\end{equation}%
and%
\begin{equation}
\mathbb{P}({\tiny \Xi }_{N}^{(1)})\leq \mathrm{e}^{-N^{2\alpha +o(1)}}.
\label{bound-stretch-event-1}
\end{equation}%
Note that $\var(X_{n}^{(0)})=O(N^{2m-1})$ and $\var(\xi _{n})=1$, having $%
2\alpha <1$, (\ref{bound-stretch-prob-t-0-n}) follows by a union bound. Next
we show (\ref{bound-stretch-event-1}). Define the rotation matrix%
\begin{equation}
\mathsf{R}_{\theta }:=\left[ 
\begin{array}{cc}
\cos \theta  & -\sin \theta  \\ 
\sin \theta  & \cos \theta 
\end{array}%
\right] .  \label{def-R-theta}
\end{equation}%
Notice that by (\ref{def-new-T}) and our assumption on $\theta _{j,k}$'s in
Lemma \ref{lem-general-regression-coefficients}, we have%
\begin{equation}
\mathbb{T}_{j_{1},n,k}=\mathbb{T}_{j_{2},n,k}\text{ if }\theta
_{j_{1}}=-\theta _{j_{2}},  \label{eq-T-j1-equal-T-j2}
\end{equation}%
and for $n^{\prime }\geq n$, iterating over $i=1,...,k$ we get from (\ref%
{eq-b-iterative}) that{\small 
\begin{eqnarray}
\mathbf{\mathbb{\overrightarrow{\mathbb{T}}}}_{j,n^{\prime }+1,k}(0,n) &=&%
\mathsf{R}_{\theta _{j}}\mathbf{\overrightarrow{\mathbb{T}}}_{j,n^{\prime
},k}(0,n)+\mathsf{R}_{\theta _{j,k}-\theta _{j,k-1}}\mathbf{\overrightarrow{%
\mathbb{T}}}_{j,n^{\prime }+1,k-1}\,(0,n)  \label{eq-T-j-n+1-k} \\
&=&\mathsf{R}_{\theta _{j}}(\sum_{i=0}^{k}\mathsf{R}_{\theta _{j,k}-\theta
_{j,i}}\mathbf{\overrightarrow{\mathbb{T}}}_{j,n^{\prime },i})(0,n).  \notag
\end{eqnarray}%
}Further iterating over $n^{\prime }=n,n+1,...,n+s-1$, $s\in \mathbb{Z}^{+}$
yields the identity%
\begin{equation}
\mathbf{\overrightarrow{\mathbb{T}}}_{j,n+s,k}(0,n)=\left( \mathsf{R}%
_{\theta _{j}}\right) ^{s}(\sum_{i=0}^{k}P_{s}(k-i)\mathsf{R}_{\theta
_{j,k}-\theta _{j,i}}\mathbf{\overrightarrow{\mathbb{T}}}_{j,n,i})(0,n),
\label{eq-T-j-n+s-k}
\end{equation}%
as in (\ref{eq-s-iter}), for $P_{s}(\cdot )$ as in (\ref{eq-def-P-s}). Thus,
letting%
\begin{equation*}
\mathcal{D}_{n,s}:=\{|\sum_{j=1}^{\ell }\sum_{k=0}^{m_{j}-1}c_{j,k}\mathbb{T}%
_{j,n+s,k}(n,n+s)|\leq N^{\gamma _{1}}\},
\end{equation*}%
by Lemma \ref{Lemma-roration-general-case}{\small 
\begin{equation}
{\tiny \Xi }_{N}^{(1)}\supseteq \bigcup_{n=N/2}^{3N/4-M}\{|X_{n}^{(1)}|\geq 
\frac{2}{C^{\prime }}N^{\gamma _{1}}\}\cap
\bigcap_{n=N/2}^{3N/4-M}\bigcap_{s=1}^{M}\mathcal{D}_{n,s}.
\label{relation-stretch-sets-1}
\end{equation}%
} Since $M$ is a constant independent of $N$, by a union bound we have%
\begin{equation}
\mathbb{P}(\bigcup_{n=N/2}^{3N/4-M}\bigcup\limits_{s=1}^{M}\mathcal{D}%
_{n,s}^{C})\leq \mathrm{e}^{-N^{2\gamma _{1}+o(1)}}\,.
\label{bound-stretch-n-n+s}
\end{equation}%
Set $\beta =\frac{\gamma _{1}}{m^{\prime }-1/2}$. Conditioning on the $%
\sigma $-field $\sigma (\xi _{1},\ldots ,\xi _{\lbrack N/2+(i-1)N^{\beta }]})
$, it is easy to see that $\sum_{j=1}^{\ell }\sum_{k=0}^{m_{j}-1}c_{j,k}%
\mathbb{T}_{j,[N/2+iN^{\beta }],k}\thicksim N(\mu _{N},\sigma _{N}^{2})$
where $\sigma _{N}^{2}=$ $O(N^{2\gamma _{1}})$. Note that $\exists \delta >0$
such that%
\begin{equation*}
\sup_{\mu }\{\mathbb{P}(|N(\mu ,\sigma _{N}^{2})|\leq 2\sqrt{\sigma _{N}^{2}}%
/C^{\prime })\}\leq 1-\delta \text{.}
\end{equation*}%
Thus we obtain that there exists $0<\epsilon <1$ such that{\small 
\begin{equation*}
\mathbb{P}(|\sum_{j=1}^{\ell }\sum_{k=0}^{m_{j}-1}c_{j,k}\mathbb{T}%
_{j,[N/2+iN^{\beta }],k}|\leq \frac{2}{C^{\prime }}N^{\gamma _{1}}\mid
\bigcap\limits_{s=1}^{i-1}|\sum_{j=1}^{\ell }\sum_{k=0}^{m_{j}-1}c_{j,k}%
\mathbb{T}_{j,[N/2+sN^{\beta }],k}|\leq \frac{2}{C^{\prime }}N^{\gamma
_{1}})\leq \epsilon ,
\end{equation*}%
}for all $1\leq i\leq \lbrack N^{1-\beta }/4]$. Recalling that $\gamma
_{1}=m-1/2+\alpha $, we get (\ref{bound-stretch-event-1}), completing the
proof on the upper bound.

\subsection{Lower bound\label{subsection-stretched-upper-bound}}

The intuition for the lower bound comes from the fact that the event of
persistence will present if the \textsc{\lowercase{IRW}} sits above a
certain curve while the rotated (integrated) random walk sits below it.
Furthermore, the probability for the intersection of the two events is close
to the product of each of the probabilities. The latter claim requires a
careful justification, to which end we use the following lemma.

\begin{lemma}
\label{lem-conditional-gaussian}Assume that $(U_{1},U_{2},V_{1},...,V_{d})$
is a multivariate Gaussian random vector, with mean $(\mu _{U_{1}},...,\mu
_{V_{d}})$ and correlation matrix (not the covariance matrix)%
\begin{equation*}
\left( 
\begin{array}{cc}
\mathsf{\Sigma }_{U,U} & \mathsf{\mathsf{\Sigma }}_{U,V} \\ 
\mathsf{\Sigma }_{U,V}^{T} & \mathsf{\Sigma }_{V,V}%
\end{array}%
\right) .
\end{equation*}%
Write $\mathsf{\Sigma }_{U_{i},V}$ as the $i$th row of $\mathsf{\Sigma }%
_{U,V}$ for $i=1,2$. Then for the conditional expectation we have%
\begin{equation*}
\left\vert \mathbb{E}[U_{1}\mid V_{1},...,V_{d}]-\mu _{U_{1}}\right\vert
\leq d\sqrt{\var(U_{1})}||\mathsf{\Sigma }_{V,V}^{-1}||_{op}||\mathsf{\Sigma 
}_{U_{1},V}||_{\infty }||(V_{i}/\sqrt{\var(V_{i})})_{i\in \lbrack
d]}||_{\infty }.
\end{equation*}%
What's more, for the conditional covariance we have%
\begin{equation*}
|\Cov\lbrack U_{1},U_{2}\mid V_{1},...,V_{d}]-\Cov\lbrack U_{1},U_{2}]|\leq d%
\sqrt{\var(U_{1})\var(U_{2})}||\mathsf{\Sigma }_{U_{1},V}||_{\infty }||%
\mathsf{\Sigma }_{U_{2},V}||_{\infty }||\mathsf{\Sigma }_{V,V}^{-1}||_{op}.
\end{equation*}
\end{lemma}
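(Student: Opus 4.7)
The lemma is a clean bookkeeping exercise starting from the standard conditional Gaussian identities. I would first recall that for a jointly Gaussian vector $(U, V)$ with covariance blocks $K_{U,V}$, $K_{V,V}$, $K_{U,U}$, one has
\begin{equation*}
\mathbb{E}[U_1 \mid V] = \mu_{U_1} + K_{U_1,V} K_{V,V}^{-1}(V - \mu_V), \qquad \Cov(U_1,U_2 \mid V) = \Cov(U_1,U_2) - K_{U_1,V} K_{V,V}^{-1} K_{V,U_2}.
\end{equation*}
These reduce both quantities to be estimated to bilinear forms in $K_{V,V}^{-1}$ against the cross-covariance vectors.

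The next step is to translate from covariance to correlation matrices, since the lemma is stated in correlation form. Let $D_U=\mathrm{diag}(\sqrt{\var(U_1)},\sqrt{\var(U_2)})$ and $D_V=\mathrm{diag}(\sqrt{\var(V_i)})_{i\in[d]}$. Then $K_{U,V}=D_U\mathsf{\Sigma}_{U,V}D_V$ and $K_{V,V}=D_V\mathsf{\Sigma}_{V,V}D_V$, so $K_{V,V}^{-1}=D_V^{-1}\mathsf{\Sigma}_{V,V}^{-1}D_V^{-1}$. Substituting gives
\begin{equation*}
\mathbb{E}[U_1 \mid V] - \mu_{U_1} = \sqrt{\var(U_1)}\,\mathsf{\Sigma}_{U_1,V}\,\mathsf{\Sigma}_{V,V}^{-1}\,D_V^{-1}(V-\mu_V),
\end{equation*}
and analogously
\begin{equation*}
\Cov(U_1,U_2 \mid V) - \Cov(U_1,U_2) = -\sqrt{\var(U_1)\var(U_2)}\,\mathsf{\Sigma}_{U_1,V}\,\mathsf{\Sigma}_{V,V}^{-1}\,\mathsf{\Sigma}_{U_2,V}^T,
\end{equation*}
where crucially the diagonal factors $D_V$ cancel neatly and the entire expression is in terms of correlation data.

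The final step is a sequence of elementary norm inequalities. For any vector $y\in\R^d$ I apply H\"older $|\mathsf{\Sigma}_{U_1,V}\cdot y|\le \|\mathsf{\Sigma}_{U_1,V}\|_\infty\|y\|_1$, then $\|y\|_1\le\sqrt{d}\|y\|_2$, then the operator-norm bound $\|\mathsf{\Sigma}_{V,V}^{-1} x\|_2\le\|\mathsf{\Sigma}_{V,V}^{-1}\|_{\mathrm{op}}\|x\|_2$, and finally $\|x\|_2\le\sqrt{d}\|x\|_\infty$. Applying this chain with $x=D_V^{-1}(V-\mu_V)$ (treating the centered case that actually arises in applications, where $\mu_V=0$, so that $D_V^{-1}V$ has entries $V_i/\sqrt{\var(V_i)}$) yields the first estimate; applying it with $x=D_V^{-1}\cdot D_V\mathsf{\Sigma}_{U_2,V}^T=\mathsf{\Sigma}_{U_2,V}^T$ yields the second. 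The factor of $d$ in the stated bound arises precisely as $\sqrt{d}\cdot\sqrt{d}$ from the two $\ell_2/\ell_\infty$ and $\ell_1/\ell_2$ comparisons.

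There is essentially no obstacle here: the only point requiring any care is keeping the correlation-versus-covariance scaling straight so that only $\mathsf{\Sigma}$-quantities (not $K$-quantities) appear after cancellation, and tracking the two applications of the $\sqrt{d}$ inequality to get the linear factor $d$ rather than $d^{3/2}$ or $\sqrt{d}$. Since the lemma is purely algebraic and finite-dimensional, no probabilistic ingredients beyond the conditional Gaussian formula are needed.
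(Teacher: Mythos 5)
Your proof is correct and follows essentially the same route as the paper: the paper simply records the conditional Gaussian formulas in normalized (correlation) form and declares the lemma a "direct result," while you derive those formulas from the covariance-form identities via the diagonal scaling $D_V$ and then spell out the norm chain $\|\cdot\|_\infty\to\|\cdot\|_1\to\|\cdot\|_2\to\|\cdot\|_{op}\to\|\cdot\|_2\to\|\cdot\|_\infty$ that supplies the factor $d=\sqrt d\cdot\sqrt d$. Your aside about the implicit centering of $V$ is a fair observation about the lemma's statement, which the paper glosses over as well.
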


\begin{proof}
Recall that from the conditional multivariate Gaussian formula we have%
\begin{equation}
\mathbb{E}[U_{1}\mid V_{1},...,V_{d}]=\mu _{U_{1}}+\sqrt{\var(U_{1})}\mathsf{%
\Sigma }_{U_{1},V}\mathsf{\Sigma }_{V,V}^{-1}(V_{i}/\sqrt{\var(V_{i})}%
)_{i\in \lbrack d]}^{T}\text{,}  \label{eq-conditional-gaussian-mean}
\end{equation}%
and%
\begin{equation}
\Cov\lbrack U_{1},U_{2}\mid V_{1},...,V_{d}]=\Cov\lbrack U_{1},U_{2}]-\sqrt{%
\var(U_{1})\var(U_{2})}\mathsf{\Sigma }_{U_{1},V}\mathsf{\Sigma }_{V,V}^{-1}%
\mathsf{\Sigma }_{U_{2},V}\text{.}  \label{eq-conditional-gaussian-var}
\end{equation}%
Lemma \ref{lem-conditional-gaussian} is a direct result from (\ref%
{eq-conditional-gaussian-mean}) and (\ref{eq-conditional-gaussian-var}).
\end{proof}

It is easy to check that for $s_{1},s_{2}\in \lbrack m]-1$,%
\begin{equation*}
\Corr(\mathbb{T}_{0,n_{k+1},s_{1}}(n_{k},n_{k+1}),\mathbb{T}%
_{0,n_{k+1},s_{2}}(n_{k},n_{k+1}))=\frac{2\sqrt{\left( s_{1}+1/2\right)
\left( s_{2}+1/2\right) }}{s_{1}+s_{2}+1}+o(1)\text{.}
\end{equation*}%
Write $\mathsf{\Sigma }_{m}$ as a $m\times m$ matrix with $\mathsf{\Sigma }%
_{m}(i,j)=2\sqrt{\left( i-1/2\right) \left( j-1/2\right) }/(i+j-1)$. Then we
can find $\varepsilon >0$ and $a_{0},...,a_{m-1}>\varepsilon $, such that if 
$b_{i}\in \lbrack a_{i}-\varepsilon ,a_{i}+\varepsilon ]$ $\forall i\in
\lbrack m]-1$, then%
\begin{equation}
\mathsf{\Sigma }_{m}^{-1}(b_{0},...,b_{m-1})^{T}>0\text{ entrywisely.}
\label{eq-sig-inv-b-positive}
\end{equation}%
To see this, we can choose $a_{i}=\mathsf{\Sigma }_{m}(i+1,1)+...+\mathsf{%
\Sigma }_{m}(i+1,m)$, in which case $\mathsf{\Sigma }%
_{m}^{-1}(a_{0},...,a_{m-1})^{T}=(1,1,...,1)^{T}$ since $\mathsf{\Sigma }%
_{m}^{-1}\mathsf{\Sigma }_{m}=\mathsf{I}$. Letting $\varepsilon $ be
sufficiently small gives (\ref{eq-sig-inv-b-positive}). Consider $%
n_{k}=2^{k} $ for $k=1,\ldots ,[\log _{2}N]$. Let%
\begin{equation*}
\alpha =\frac{m^{\prime }-m}{2m^{\prime }}\text{, }\beta =\frac{\gamma _{1}}{%
m^{\prime }-1/2}\text{, }n_{k,\alpha }=n_{k}^{\gamma _{1}}\text{, and }%
\gamma _{2}(\ell ):=-2\ell -2m^{\prime }-10.
\end{equation*}%
For convenience of notation, we write $\widetilde{\mathbb{T}}_{j,r,s}=%
\mathbb{T}_{j,n_{k}+rn_{k}^{\beta },s}(n_{k}+(r-1)n_{k}^{\beta
},n_{k}+rn_{k}^{\beta })$, and $\widetilde{Z}_{p,r,s}=Z_{p,n_{k}+rn_{k}^{%
\beta },s}(n_{k}+(r-1)n_{k}^{\beta },n_{k}+rn_{k}^{\beta })$. Define the
events{\small 
\begin{equation*}
\Omega _{k,0}:=\{\mathbb{T}_{0,n_{k+1},s}(n_{k},n_{k+1})\in n_{k,\alpha
}(\log N)^{10}n_{k}^{s-m}(a_{s}-\varepsilon ,a_{s}+\varepsilon )%
\mbox{ for
all }s\in \lbrack m]-1\},
\end{equation*}%
} 
\begin{equation*}
\Omega _{k,\mathbb{T}}:=\bigcap_{r=1}^{n_{k}^{1-\beta }}\{\left\vert 
\widetilde{\mathbb{T}}_{j,r,s}\right\vert \leq N^{\gamma _{2}(m_{j})}:j\in
\lbrack \ell ],s\in \lbrack m_{j}]-1\}\,,
\end{equation*}%
\begin{equation*}
\Omega _{k,Z}:=\bigcap_{r=1}^{n_{k}^{1-\beta }}\{\left\vert \widetilde{Z}%
_{p,r,s}\right\vert \leq N^{\gamma _{2}(m(\widetilde{\lambda }_{j}))}:p\in
\lbrack q],s\in \lbrack m(\widetilde{\lambda }_{j})]-1\}\,,
\end{equation*}%
and%
\begin{equation*}
\Omega _{k}:=\Omega _{k,0}\bigcap \Omega _{k,\mathbb{T}}\bigcap \Omega
_{k,Z}.
\end{equation*}

\begin{lemma}
\label{lem-Omega-k-stretched} Using the above definitions, there exists a
constant $C>0$ such that for all $1\leq k\leq \lbrack \log _{2}N]$,%
\begin{equation*}
\mathbb{P}(\Omega _{k})\geq \mathrm{e}^{-C(\log N\cdot n_{k}^{1-\beta
}+(\log N)^{20}n_{k}^{2\alpha })}\,.
\end{equation*}
\end{lemma}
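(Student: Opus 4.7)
The plan is to prove the factorization
\begin{equation*}
\mathbb{P}(\Omega_k) \;\geq\; \tfrac{1}{2}\, \mathbb{P}(U \in B_U)\, \mathbb{P}(V \in B_V),
\end{equation*}
where I write $U := (\mathbb{T}_{0, n_{k+1}, s}(n_k, n_{k+1}))_{s \in [m]-1}$ for the Gaussian vector whose target box $B_U$ defines $\Omega_{k, 0}$, and $V := (V_r)_{r=1}^{n_k^{1-\beta}}$ with each $V_r$ collecting the variables $\widetilde{\mathbb{T}}_{j, r, s}$ and $\widetilde{Z}_{p, r, s}$, so that the joint box $B_V = \prod_r B_r$ defines $\Omega_{k, \mathbb{T}} \cap \Omega_{k, Z}$. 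All of $U, V$ are measurable with respect to $\mathcal{G}_k := \sigma(\xi_i : n_k < i \leq n_{k+1})$; moreover, since each $V_r$ is a linear functional of the $\xi_i$'s in the \emph{disjoint} $r$-th sub-block of length $n_k^\beta$, the vectors $V_1, \ldots, V_{n_k^{1-\beta}}$ are mutually independent, and so the second factor above decomposes as $\prod_r \mathbb{P}(V_r \in B_r)$.

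The two marginal estimates are direct. First, $U$ is centered Gaussian with $\var(U_s) \asymp n_k^{2s+1}$ and its rescaled correlation matrix converges to the positive definite $\mathsf{\Sigma}_m$ from the setup. In normalized coordinates, $B_U$ is centered at a point of size $(\log N)^{10} n_k^{\alpha - 1}$ with widths of the same order. A direct evaluation of the Gaussian density on this box yields
\begin{equation*}
\mathbb{P}(U \in B_U) \;\geq\; \exp\!\bigl(-C_1\bigl(\log n_k + (\log N)^{20} n_k^{2(\alpha - 1)}\bigr)\bigr),
\end{equation*}
which is absorbed in $\exp(-C(\log N)^{20} n_k^{2\alpha})$ since $n_k \geq 1$. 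Second, each $V_r$ is a centered Gaussian vector whose covariance has eigenvalues polynomial in $n_k^\beta$, while the target half-widths are $N^{\gamma_2}$ with $\gamma_2 \leq -2m' - 10$ very negative. Standard Gaussian anti-concentration gives $\mathbb{P}(V_r \in B_r) \geq N^{-C_2}$, whence
\begin{equation*}
\mathbb{P}(V \in B_V) \;=\; \prod_{r=1}^{n_k^{1-\beta}} \mathbb{P}(V_r \in B_r) \;\geq\; \exp\!\bigl(-C_2 (\log N)\, n_k^{1 - \beta}\bigr).
\end{equation*}

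The heart of the argument is the near-independence step, for which I invoke Lemma \ref{lem-conditional-gaussian} with $V$ of total dimension $d = O(n_k^{1 - \beta})$ as the conditioning block and each $U_s$ as target. Three inputs must be verified: \textbf{(a)} $\|\mathsf{\Sigma}_{V, V}^{-1}\|_{op} = O(1)$, thanks to the block-diagonal structure of $\Cov(V)$ with well-conditioned $O(1)$-size blocks; \textbf{(b)} the cross-correlations obey $\|\mathsf{\Sigma}_{U_s, V}\|_\infty = O(n_k^{-c(1 - \beta)})$ for some $c > 0$, because $U_s$ integrates a polynomial kernel against all of $\mathcal{G}_k$ while each coordinate of $V_r$ is supported on a sub-block of length only $n_k^\beta$; \textbf{(c)} on $\{V \in B_V\}$, each normalized coordinate obeys $|V_i|/\sqrt{\var(V_i)} \leq N^{-C m'}$ for some $C>0$, a direct consequence of the very negative $\gamma_2$. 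Substituting these into Lemma \ref{lem-conditional-gaussian} bounds the conditional mean shift of $U_s$ given $V$ by $\sqrt{\var(U_s)} \cdot d \cdot O(1) \cdot n_k^{-c(1-\beta)} \cdot N^{-Cm'}$, which, for $|\gamma_2|$ large enough, is far smaller than the $s$-coordinate width of $B_U$ (of order $\sqrt{\var(U_s)}\cdot n_k^{\alpha - 1}(\log N)^{10}$); the conditional covariance is similarly shown to be a negligible perturbation of $\Cov(U)$. Consequently, uniformly on $\{V \in B_V\}$,
\begin{equation*}
\mathbb{P}(U \in B_U \mid V) \;\geq\; \tfrac{1}{2}\, \mathbb{P}(U \in B_U),
\end{equation*}
which combined with the marginal bounds completes the proof.

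The main obstacle is step \textbf{(b)}: establishing the cross-correlation decay $\|\mathsf{\Sigma}_{U_s, V}\|_\infty = O(n_k^{-c(1-\beta)})$ with an explicit $c$, and then verifying that the product $d \cdot \|\mathsf{\Sigma}_{U_s, V}\|_\infty \cdot \|V_i/\sqrt{\var V_i}\|_\infty$ coming from Lemma \ref{lem-conditional-gaussian} is indeed negligible compared to the very narrow widths of $B_U$. This reduces to careful bookkeeping on the polynomial IRW coefficients $b_{n, i, s}$ restricted to a sub-block of length $n_k^\beta$ versus the whole block of length $n_k$, and the large exponent $|\gamma_2| = 2 m_j + 2m' + 10$ built into $\Omega_{k,\mathbb{T}}, \Omega_{k,Z}$ is chosen precisely to swamp any polynomial slack arising in this estimate.
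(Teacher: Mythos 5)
Your proposal is correct and follows essentially the same strategy as the paper's proof: factorize via independence across sub-blocks to get $\mathbb{P}(\Omega_{k,\mathbb{T}}\cap\Omega_{k,Z})\geq \mathrm{e}^{-C\log N\cdot n_k^{1-\beta}}$, then use Lemma~\ref{lem-conditional-gaussian} together with the small cross-correlations (Lemma~\ref{lemma-convolution}) and the astronomically small $N^{\gamma_2}$ thresholds defining $\Omega_{k,\mathbb{T}},\Omega_{k,Z}$ to show that conditioning barely shifts the mean or covariance of the $\mathbb{T}_{0,n_{k+1},s}(n_k,n_{k+1})$ vector, and finish with Gaussian anti-concentration on $B_U$. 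The only difference is cosmetic: the paper applies the conditional-Gaussian lemma blockwise and tracks conditional mean/variance of $U_s$ explicitly, whereas you apply it monolithically with $d=O(n_k^{1-\beta})$ and aim directly for $\mathbb{P}(U\in B_U\mid V\in B_V)\geq\tfrac12\mathbb{P}(U\in B_U)$; your stated rates in steps (a) and (b) are somewhat looser than the paper's $O(n_k^{-(\beta+1)/2})$ estimates, but as you correctly note, the slack is absorbed by the $N^{\gamma_2}$ factor, so the argument goes through.
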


\begin{proof}
It suffices to consider $K_{0}\leq k\leq \log _{2}N$ for a large number $%
K_{0}$ independent of $N$. Fix such a $k$. From the definition of $%
\widetilde{\mathbb{T}}_{j,r,s}$ we can verify that $\var\widetilde{\mathbb{T}%
}_{j,r,s}\leq O(N^{2m_{j}-2})$ $\forall s\in \lbrack m_{j}]-1$, thus we can
verify that there exists $C>0$ such that for any $j\in \lbrack \ell ],s\in
\lbrack m_{j}]-1$ and $r\in \lbrack n_{k}^{1-\beta }]$, 
\begin{equation}
\mathbb{P}(\left\vert \widetilde{\mathbb{T}}_{j,r,s}\right\vert \leq \frac{1%
}{\sqrt{c_{1}}}N^{\gamma _{2}(m_{j})})\geq N^{-C}.
\label{lower-bound-P-T-j-r-s}
\end{equation}%
Similarly for some $C_{0}>0$ and any $p\in \lbrack q],s\in \lbrack m(%
\widetilde{\lambda }_{j})]-1,r\in \lbrack n_{k}^{1-\beta }]$, we have%
\begin{equation}
\mathbb{P}(\left\vert \widetilde{Z}_{p,r,s}\right\vert \leq \frac{1}{\sqrt{%
c_{1}}}N^{\gamma _{2}(m(\widetilde{\lambda }_{j}))})\geq N^{-C_{0}}.
\label{lower-bound-Z-1}
\end{equation}%
Note the fact that for multivariate Gaussian $(W_{1},...,W_{d})$ with $\var%
(W_{i})=1$ $\forall i\in \lbrack d]$, we can write each $W_{i}$ as a linear
combination with i.i.d. standard Gaussian random variables $U_{1},...,U_{d}$
with each coefficient having absolute value smaller than 1. Thus for any $%
w_{1},...,w_{d}>0$,%
\begin{equation*}
\mathbb{P}(\left\vert W_{i}\right\vert \leq w_{i}\text{ }\forall i\in
\lbrack d])\geq \mathbb{P}(\left\vert U_{i}\right\vert \leq \min (w_{i})/d%
\text{ }\forall i\in \lbrack d])=\mathbb{P}(\left\vert U_{1}\right\vert \leq
\min (w_{i})/d)^{d}\text{.}
\end{equation*}%
By normalizing $\widetilde{\mathbb{T}}_{j,r,s}$ and $\widetilde{Z}_{p,r,s}$
to have variance 1 and using the above argument, we have that{\small 
\begin{equation*}
\mathbb{P}(\left\vert \widetilde{\mathbb{T}}_{j,r,s_{1}}\right\vert
,\left\vert \widetilde{Z}_{p,r,s_{2}}\right\vert \leq N^{\gamma _{2}}\text{, 
}\forall \text{ }j\in \lbrack \ell ],s_{1}\in \lbrack m(\lambda
_{j})]-1,p\in \lbrack q],s_{2}\in \lbrack m(\widetilde{\lambda }%
_{p})]-1)\geq N^{-C_{1}},\,
\end{equation*}%
}for some $C_{1}>0$. By independence we further see that for some $C>0$,%
\begin{equation}
\mathbb{P}(\Omega _{k,\mathbb{T}},\Omega _{k,\mathbb{Z}})\geq \mathrm{e}%
^{-C\log N\cdot n_{k}^{1-\beta }}\,.  \label{bound-P-omega-k-t}
\end{equation}%
With Lemma~\ref{lemma-convolution}, for all $j\in \lbrack \ell ],r\in
\lbrack n_{k}^{1-\beta }]$, $s\in \lbrack m]-1$ and $s_{1}\in \lbrack
m(\lambda _{j})]-1$, $s_{2}\in \lbrack m(\widetilde{\lambda }_{j})]-1$,%
\begin{eqnarray}
\rho (\mathbb{T}_{0,n_{k+1},s}(n_{k}+(r-1)n_{k}^{\beta },n_{k}+rn_{k}^{\beta
}),\widetilde{\mathbb{T}}_{j,r,s_{1}}) &=&O(n_{k}^{-(\beta +1)/2})\,  \notag
\\
\rho (\mathbb{T}_{0,n_{k+1},s}(n_{k}+(r-1)n_{k}^{\beta },n_{k}+rn_{k}^{\beta
}),\widetilde{Z}_{p,r,s_{2}}) &=&O(n_{k}^{-1/2})\,.  \label{eq-stretched-cor}
\end{eqnarray}%
Define 
\begin{equation*}
\mathcal{F}_{\mathbf{T}}:=\sigma (\widetilde{\mathbb{T}}_{j,r,s_{1}},%
\widetilde{Z}_{p,r,s_{2}}:j\in \lbrack \ell ],s_{1}\in \lbrack m(\lambda
_{j})]-1,p\in \lbrack q],s_{2}\in \lbrack m(\widetilde{\lambda }%
_{p})]-1,r\in \lbrack n_{k}^{1-\beta }]).
\end{equation*}%
By \eqref{eq-stretched-cor}, considering the conditional distribution
blockwisely for $(n_{k}+(r-1)n_{k}^{\beta },n_{k}+rn_{k}^{\beta })$ where $%
r\in \lbrack n_{k}^{1-\beta }]$, with Lemma \ref{lem-conditional-gaussian},
we can verify that there exist constants $c,C>0$ (independent of $N$) such
that for all $s\in \lbrack m]-1$,%
\begin{equation*}
cn_{k}^{2s+1}\leq \var(\mathbb{T}_{0,n_{k+1},s}(n_{k},n_{k+1})\mid \mathcal{F%
}_{\mathbf{T}})\leq Cn_{k}^{2s+1}\,,
\end{equation*}%
and%
\begin{equation*}
{\mathbb{E}}(\mathbb{T}_{0,n_{k+1},s}(n_{k},n_{k+1})\mid \Omega _{k,\mathbb{T%
}},\Omega _{k,\mathbb{Z}})=O(1)\text{.}
\end{equation*}%
It follows that for a constant $C^{\prime }>0$,%
\begin{equation*}
\mathbb{P}(\Omega _{k,0}\mid \Omega _{k,\mathbb{T}},\Omega _{k,\mathbb{Z}%
})\geq \mathrm{e}^{-C^{\prime }(\log N)^{20}n_{k}^{2\alpha }}\,.
\end{equation*}%
Combined with (\ref{bound-P-omega-k-t}), this completes the proof of the
lemma.
\end{proof}

Now, we further define $\Omega _{k}^{\star }$ to be the intersection of $%
\Omega _{k}$ and $\Omega _{k,R}$, where{\small 
\begin{eqnarray*}
\Omega _{k,R}:= &&\bigcap_{t=n_{k}}^{n_{k+1}}\{\mathbb{T}_{0,t,s}(n_{k},t)%
\geq -\kappa (\log N)^{10}n_{k,\alpha }\text{, for any }s\in \lbrack m]-1\}
\\
\bigcap &&\bigcap_{t=n_{k}}^{n_{k+1}}\{\left\vert \mathbb{T}%
_{j,t,s}(n_{k},t)\right\vert \leq \kappa (\log N)^{10}n_{k,\alpha },%
\mbox{ for all
}j\in \lbrack \ell ],s\in \lbrack m(\lambda _{j})]-1\} \\
\bigcap &&\bigcap_{t=n_{k}}^{n_{k+1}}\{\left\vert
Z_{p,t,s}(n_{k},t)\right\vert \leq \kappa (\log N)^{10}n_{k,\alpha },%
\mbox{ for all
}p\in \lbrack q],s\in \lbrack m(\widetilde{\lambda }_{j})]-1\}\,,
\end{eqnarray*}%
}and $\kappa $ is a small positive constant independent of $N$. By
calculating blockwisely the correlation and using Lemma \ref%
{lem-conditional-gaussian}, it is easy to verify that: (1) Conditioned on $%
\mathcal{F}_{\mathbf{T}}$, the conditional correlation matrix of $\{\mathbb{T%
}_{0,n_{k+1},0},...,\mathbb{T}_{0,n_{k+1},m-1}\}$ converges to $\mathsf{%
\Sigma }_{m}$ as $k\rightarrow \infty $. (2) Conditioned on any $\omega \in 
\mathcal{F}_{\mathbf{T}}$ such that $\omega \in \Omega _{k,\mathbb{T}}\cap
\Omega _{k,\mathbb{Z}}$, there exists $\,C<\infty $ independent of $N$ such
that $\mathbb{E}[\mathbb{T}_{0,r,s}\mid \omega ]\geq -CN^{-10-m^{\prime }}$
and $\Corr(\mathbb{T}_{0,r,s_{1}},\mathbb{T}_{0,n_{k+1},s_{2}})>0$ for any $%
r\in \lbrack n_{k}^{1-\beta }]$ and $s_{1},s_{2}\in \lbrack m]-1$.

Now instead of the condition of $\omega $, we further condition on $\Omega
_{k,\mathbf{0}}$ and thus get $\Omega _{k}$. First we consider $\mathbb{E}[%
\mathbb{T}_{0,r,s}\mid \Omega _{k}]$. Recall our choice of $a_{i}$ and $%
\varepsilon $. With (1) and (2) above and (\ref{eq-conditional-gaussian-mean}%
), regarding $U_{1}=\mathbb{T}_{0,r,s}\mid \omega $ and $Y_{i}=\mathbb{T}%
_{0,n_{k+1},i-1}\mid \omega $ for $i\in \lbrack m]$, we can notice that the
second term in the right hand side of (\ref{eq-conditional-gaussian-mean})
is positive, thus for some $C<\infty $ independent of $N$%
\begin{equation}
\mathbb{E}[\mathbb{T}_{0,r,s}\mid \Omega _{k}]\geq -CN^{-10-m^{\prime }}.
\label{eq-new-bound-E-T-condition}
\end{equation}%
Writing $r_{t}:=\max \{\ell :n_{k}+rn_{k}^{\beta }\leq t\}$. With similar
method to above analysis, for any $n_{k}+r_{t}n_{k}^{\beta }\leq t_{0}$ $%
<n_{k}+(r_{t}+1)n_{k}^{\beta }$, by first calculating the joint distribution
of $\{\xi _{t_{0}},\mathbb{T}_{0,n_{k+1},0},...,\mathbb{T}_{0,n_{k+1},m-1}\}$
conditioned on $\omega \in \Omega _{k,\mathbb{T}}\cap \Omega _{k,\mathbb{Z}}$%
, and then condition on $\Omega _{k,0}$, we can verify that there exists $%
C<\infty $ independent of $N$ such that for any $n_{k}\leq t_{0}<n_{k+1}$,%
\begin{equation}
\left\vert \mathbb{E}[\xi _{t_{0}}\mid \Omega _{k,0}]\right\vert \leq
Cn_{k}^{\alpha -1/2}(\log N)^{10}.  \label{eq-new-bound-ks-t0}
\end{equation}%
From (\ref{eq-new-bound-E-T-condition}) and (\ref{eq-new-bound-ks-t0}), with
(\ref{eq-T-j-n+s-k}) it is easy to check that for some $C_{1}<\infty $
independent of $N$,%
\begin{equation*}
{\mathbb{E}}[\mathbb{T}_{0,t,s}(n_{k},t)\mid \Omega _{k}]\geq
-C_{1}n_{k}^{\beta (s+1)+\alpha -1/2}(\log N)^{10}.
\end{equation*}%
Combined with the fact that $\beta <1$ and $\var(\mathbb{T}%
_{0,t,s}(n_{k},t)\mid \Omega _{k})=O(n_{k}^{2s+1})$ (since conditional
variance is always smaller than variance), we have%
\begin{equation}
\sum_{t=n_{k}}^{n_{k+1}}\mathbb{P}(\mathbb{T}_{0,t,s}(n_{k},t)\leq -\kappa
(\log N)^{10}n_{k,\alpha }\mid \Omega _{k})=O(1/N)\,.  \label{eq-S-Omega-k}
\end{equation}%
For $s\in \lbrack m_{j}]-1$, By the definition of $\Omega _{k}$ we see that
on $\Omega _{k}$%
\begin{equation*}
\left\vert \mathbb{T}_{j,t,s}(n_{k},t)\right\vert \leq 1+\left\vert \mathbb{T%
}_{j,t,s}(n_{k}+r_{t}n_{k}^{\beta },t)\right\vert \,.
\end{equation*}%
It is easy to check that $\Corr(\mathbb{T}_{j,t,s}(n_{k}+r_{t}n_{k}^{\beta
},t),\mathbb{T}_{0,n_{k+1},s_{1}}(n_{k},n_{k+1}))=O(n_{k}^{-1/2})$ $\forall $
$s_{1}\in \lbrack m]-1$. With Lemma \ref{lem-conditional-gaussian}, we can
further notice that $\forall \omega \in \mathcal{F}_{\mathbf{T}}$ such that $%
\omega \in $ $\Omega _{k,\mathbb{T}}\cap \Omega _{k,\mathbb{Z}}$, $\Corr(%
\mathbb{T}_{j,t,s}(n_{k}+r_{t}n_{k}^{\beta },t),\mathbb{T}%
_{0,n_{k+1},s_{1}}(n_{k},n_{k+1})\mid \omega )=O(n_{k}^{-1/2})$. Therefore
applying Lemma \ref{lem-conditional-gaussian} again with $U_{1}=$ $\mathbb{T}%
_{j,t,s}(n_{k}+r_{t}n_{k}^{\beta },t)\mid \omega $, $V_{i}=\mathbb{T}%
_{0,n_{k+1},i-1}(n_{k},n_{k+1})\mid \omega $ for $i\in \lbrack m]$, we see
that for some $C_{3}<\infty $ independent of $N$%
\begin{equation}
|{\mathbb{E}}[\mathbb{T}_{j,t,s}(n_{k}+r_{t}n_{k}^{\beta },t)\mid \Omega
_{k}]|\leq C_{3}(\log N)^{10}n_{k}^{\beta (s+1/2)+\alpha -1/2}=o((\log
N)^{10}n_{k,\alpha }),  \label{eq-e-T-j-t-s-rt}
\end{equation}%
where the last step is due to $\beta (m^{\prime }-1/2)<m$. Note that $\var(%
\mathbb{T}_{j,t,s}(n_{k}+r_{t}n_{k}^{\beta },t)\mid \Omega
_{k})=O(n_{k}^{\beta (2m^{\prime }-1)})=O(n_{k,\alpha })$ for all $s\in
\lbrack m^{\prime }]-1$, since $\beta (2s+1)\leq m+\alpha -1/2$. Thus for
any $\kappa >0$ there exists a constant $K_{0}=K_{0}(\kappa )$ such that for
all $k\geq K_{0}$,%
\begin{equation*}
\sum_{t=n_{k}}^{n_{k+1}}\mathbb{P}(|\mathbb{T}_{j,t,s}(n_{k},t)|\geq \kappa
(\log N)^{10}n_{k,\alpha }\mid \Omega _{k})=O(1/N).
\end{equation*}%
Similarly we can show that%
\begin{equation*}
\sum_{t=n_{k}}^{n_{k+1}}\mathbb{P}(|Z_{p,t,s}(n_{k},t)|\geq \kappa (\log
N)^{10}n_{k,\alpha }\mid \Omega _{k})=O(1/N).
\end{equation*}%
Combined with Lemma~\ref{lem-Omega-k-stretched} and \eqref{eq-S-Omega-k}, it
follows that for a constant $C<\infty $ independent of $N$ and all $k\geq
K_{0}$,%
\begin{equation}
\mathbb{P}(\Omega _{k}^{\star })\geq \mathbb{P}(\Omega _{k}^{\star }\mid
\Omega _{k})\mathbb{P}(\Omega _{k})\geq \mathrm{e}^{-C(\log N\cdot
n_{k}^{1-\beta }+(\log N)^{20}n_{k}^{2\alpha })}\,.  \label{eq-Omega-k-star}
\end{equation}%
Now let $\Omega ^{\star }=\cap _{k=1}^{\log _{2}N}\Omega _{k}^{\star }$.
Under $\Omega ^{\star }$, $\forall t\leq N,$ we can find $k,r$ such that $%
n_{k}+rn_{k}^{\beta }\leq t<n_{k}+(r+1)n_{k}^{\beta }$ where $r\leq
n_{k}^{1-\beta }-1$. Breaking $X_{t}$ into $X_{t}(0,n_{k}+rn_{k}^{\beta })$
and $X_{t}(n_{k}+rn_{k}^{\beta },t)$, then by the definition of $\Omega
^{\star }$, with (\ref{lower-bound-P-T-j-r-s}) it is direct to check $\Omega
^{\star }\subseteq \Omega _{N}^{+}$ when $\kappa $ is a sufficiently small
constant. Using \eqref{eq-Omega-k-star} and independence, we deduce that $%
\mathbb{P}(\Omega ^{\star })\geq \mathrm{e}^{-N^{1-\beta +o(1)}+N^{2\alpha
+o(1)}}$, and it completes the proof of the lower bound.

\section{Dominating unstable oscillatory mode: Theorem \protect\ref{thm-1}
(d)}

We first establish an important lemma below, and then give the proofs for
the lower bound and the upper bound in Subsection \ref%
{subsection-d-upper-bound} and \ref{subsection-d-lower-bound}\ respectively.

Let $\widehat{\Lambda }=\{\lambda /r^{\star },\lambda \in \Lambda \}$, $%
\Lambda _{1}=\widehat{\Lambda }\cap \partial \mathbb{D}$ and $\Lambda _{2}=%
\widehat{\Lambda }\backslash \partial \mathbb{D}$. Denote by $\widehat{Q}%
(z),Q_{1}(z),Q_{2}(z)$ monic polynomials with zero set $\widehat{\Lambda }$, 
$\Lambda _{1}$, $\Lambda _{2}$ respectively. In addition, write $\widehat{X}%
_{n}=X_{n}(r^{\star })^{-n}$, $\widehat{\xi }_{n}=\xi _{n}(r^{\star })^{-n}$
and let $(\zeta _{n})$ be an auto-regressive process generated by $Q_{2}$
and $(\widehat{\xi }_{n})$. Since all the zeroes of $Q_{2}$ has norm less
than 1, there exist $C^{\prime }<\infty $ and $r>1$ such that%
\begin{equation}
\var\zeta _{n}\leq C^{\prime }(r)^{-n}\mbox{ for all }n\in \mathbb{N}\,.
\label{eq-variance-zeta}
\end{equation}%
Due to $\widehat{Q}(z)=Q_{1}(z)Q_{2}(z)$, we see that $(\widehat{X}_{n})$ is
an auto-regressive process generated by $Q_{1}(z)$ and $(\zeta _{n})$.
Obviously the persistence of the process $(X_{n})$ is equivalent to the
persistence of $(\widehat{X}_{n})$, thus in what follows we will only
consider the process $(\widehat{X}_{n})$, and for convenience we drop the
\textquotedblleft hat\textquotedblright\ in the notation. That is to say, in
the rest of this subsection we assume $(X_{n})$ is an auto-regressive
process generated by $Q_{1}(z)$ and Gaussian sequence $(\zeta _{n})$, where $%
\Lambda \subseteq \partial \mathbb{D}$ and $\var\zeta _{n}\leq C^{\prime
}r^{-n}$ for a certain $r>1$.

Assume that the degree for $Q_{1}(z),Q_{2}(z)$ and $Q(z)$ are $L_{1},L_{2},L$
respectively. Denote by $\mathbf{Y}_{n}=[X_{n-L_{1}+1},\ldots ,X_{n}]^{T}$
for $n\geq L_{1}$, and denote by $\mathsf{\Sigma }_{n}$ the covariance
matrix of $\mathbf{Y}_{n}$. For a degree-$\ell $ polynomial $%
P(z)=\sum_{i=1}^{\ell }a_{i}z^{\ell -i}$, define an $\ell \times \ell $
matrix $\mathsf{A}=\mathsf{A}(P)$ by%
\begin{equation}
\mathsf{A}_{1,j}=a_{j}\mbox{ for }1\leq j\leq \ell ;\quad \mathsf{A}%
_{i,i-1}=1\mbox{
for }2\leq i\leq \ell ;\quad \mathsf{A}_{i,j}=0\mbox{ otherwise}.
\label{eq-def-A}
\end{equation}%
The following lemma provides estimates on $\lambda _{\mathrm{min}}(\mathsf{%
\Sigma }_{n}) $ and $\lambda _{\mathrm{max}}(\mathsf{\Sigma }_{n})$, the
minimum and maximum eigenvalues for $\mathsf{\Sigma }_{n}$.

\begin{lemma}
\label{lem-eigenvalues} Write $m^{\star }=$ $\max \{m(\lambda ):\lambda \in
\Lambda _{1}\}$. There exist constants $C,c>0$ such that%
\begin{equation*}
cn^{2m^{\star }-2}\leq \lambda _{\mathrm{min}}(\mathsf{\Sigma }_{n})\leq
\lambda _{\mathrm{max}}(\mathsf{\Sigma }_{n})\leq Cn^{2L}\mbox{ for all }%
n\in \mathbb{N}\,,
\end{equation*}
\end{lemma}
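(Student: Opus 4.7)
My plan handles the upper and lower bounds separately.

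For the upper bound $\lambda_{\max}(\mathsf{\Sigma}_n) \leq Cn^{2L}$, I would apply $\lambda_{\max}(\mathsf{\Sigma}_n) \leq \tr(\mathsf{\Sigma}_n) = \sum_{i=0}^{L_1-1} \var(X_{n-i})$ and bound each $\var(X_j) = \sum_{k=1}^j (h^{(1)}_{j-k})^2 \var(\zeta_k)$ via Lemma~\ref{lem-general-regression-coefficients}(b), which gives $|h^{(1)}_\ell| \leq C(1+\ell)^{m^\star-1}$ since $\Lambda \subseteq \partial \mathbb{D}$. Combined with $\var(\zeta_k) \leq C' r^{-k}$ from \eqref{eq-variance-zeta}, this yields $\var(X_j) \leq C j^{2m^\star-2}$, hence $\lambda_{\max}(\mathsf{\Sigma}_n) \leq Cn^{2m^\star-1} \leq Cn^{2L}$, where the last step is wasteful but convenient for later use.

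For the lower bound $\lambda_{\min}(\mathsf{\Sigma}_n) \geq cn^{2m^\star-2}$, I would exploit the state-space recursion $\mathbf{Y}_{n+1} = \mathsf{M}\mathbf{Y}_n + \zeta_{n+1}\mathbf{e}$, where $\mathsf{M}$ is the companion matrix $\mathsf{A}(Q_1)$ from \eqref{eq-def-A} (up to a coordinate reversal matching $\mathbf{Y}_n = [X_{n-L_1+1},\ldots,X_n]^T$) and $\mathbf{e}$ is the standard basis vector in the coordinate where the innovation enters. Iterating the recursion and re-expressing everything in terms of the independent variables $\hat\xi_k = \xi_k/(r^\star)^k$ gives
\begin{equation*}
\mathsf{\Sigma}_n = \sum_{i=1}^n (r^\star)^{-2i}\, \tilde{\mathbf{u}}_i \tilde{\mathbf{u}}_i^T, \qquad \tilde{\mathbf{u}}_i := \sum_{k \geq i} h^{(2)}_{k-i}\, \mathsf{M}^{n-k}\mathbf{e},
\end{equation*}
so that for any unit $\mathbf{v}\in\R^{L_1}$, $\mathbf{v}^T\mathsf{\Sigma}_n\mathbf{v} = \sum_i (r^\star)^{-2i}\, g_i(\mathbf{v})^2$ with $g_i(\mathbf{v}) := \mathbf{v}^T\tilde{\mathbf{u}}_i$. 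By Cayley--Hamilton, $\ell \mapsto \mathbf{v}^T \mathsf{M}^\ell \mathbf{e}$ satisfies the homogeneous recurrence governed by $Q_1$, and then so does $\ell \mapsto g_{n-\ell}(\mathbf{v})$ after transients. Lemma~\ref{lem-general-regression-coefficients}(a) gives the representation $g_{n-\ell}(\mathbf{v}) = \sum_{\lambda\in\Lambda_1}\sum_{j=0}^{m(\lambda)-1} \beta_{\lambda,j}(\mathbf{v})\,\lambda^\ell \ell^j$, and, because $\mathbf{e}$ is a cyclic vector for the companion matrix $\mathsf{M}$, the linear map $\mathbf{v}\mapsto\{\beta_{\lambda,j}(\mathbf{v})\}$ is a bijection of $\R^{L_1}$, yielding the uniform lower bound $\max_{\lambda,j}|\beta_{\lambda,j}(\mathbf{v})|\geq c_0$ on the unit sphere.

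To upgrade this to the claimed rate, I would restrict the sum over $i$ to a window $i\in[1,K]$ of constant length on which the weight $(r^\star)^{-2i}$ remains $\Theta(1)$, and use the polynomial-times-almost-periodic structure of $g_i(\mathbf{v})$ to argue that $\sum_{i=1}^K g_i(\mathbf{v})^2 \gtrsim n^{2m^\star-2}$ via an averaging argument over the oscillating factors $\lambda^{n-i}$. The main obstacle lies in handling unit $\mathbf{v}$ for which the top-degree coefficients $\beta_{\lambda,m^\star-1}(\mathbf{v})$ happen to be small, so that the naive $\ell^{m^\star-1}$ leading term is not available; one then descends to the largest $j^\star(\mathbf{v})\leq m^\star-1$ with $\max_\lambda|\beta_{\lambda,j^\star}(\mathbf{v})|\gtrsim 1$ and invokes a quantitative non-cancellation argument (of Weyl/Dirichlet type) controlled by the angles $\theta_\lambda$; Lemma~\ref{lem-conditional-gaussian} should be useful for decoupling contributions of modes at distinct angles, so that the full $n^{2m^\star-2}$ rate survives the reconstitution of $\mathbf{v}$ from its components in each mode.
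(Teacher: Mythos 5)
Your upper bound is essentially the paper's (the paper bounds each entry of $\mathsf{\Sigma}_n$ using the state-space iteration $\mathbf{Y}_n = \mathsf{A}_{(1)}^{n-L_1}\mathbf{Y}_{L_1}+\sum_j\mathsf{A}_{(1)}^{n-j}\zeta_j\mathbf{e}_{L_1}$ and $\|\mathsf{A}_{(1)}^k\|_\infty=O(k^{L_1})$, which gives the same $n^{2L}$ estimate), but your displayed formula $\var(X_j)=\sum_k (h^{(1)}_{j-k})^2\var(\zeta_k)$ is incorrect: the $\zeta_k$ form an \textsc{ar}$(Q_2)$ process and are correlated, so you cannot sum variances termwise. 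Replace $h^{(1)}$ and $\zeta$ by the full impulse response of $Q$ and the independent innovations $\hat\xi_k$, and the estimate goes through.

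The lower bound is where your proposal has a genuine gap. You correctly reduce to showing that, uniformly over unit $\mathbf{v}$,
\begin{equation*}
\mathbf{v}^T\mathsf{\Sigma}_n\mathbf{v}\;=\;\sum_i (r^\star)^{-2i}\,g_i(\mathbf{v})^2\;\gtrsim\;n^{2m^\star-2},
\end{equation*}
with $g_{n-\ell}(\mathbf{v})$ of the polynomial-exponential form supplied by Lemma~\ref{lem-general-regression-coefficients}(a), and restrict the $i$-sum to a constant-length window. But the step you then describe --- ``argue that $\sum_{i\in\text{window}} g_i(\mathbf{v})^2\gtrsim n^{2m^\star-2}$ via an averaging argument over the oscillating factors,'' descending to lower degree when top coefficients vanish and ``invoking a quantitative non-cancellation argument (of Weyl/Dirichlet type)'' --- is the entire nontrivial content of the lower bound, and you have not given it. You have correctly identified the obstacle (the amplitudes multiplying the leading $\ell^{m^\star-1}\lambda^\ell$ modes may be small for certain unit $\mathbf{v}$) without overcoming it. Moreover, Lemma~\ref{lem-conditional-gaussian} is not a tool that can help here: it is a formula for conditional Gaussian means and covariances and says nothing about oscillatory sums $\sum_j r_j\cos(i\theta_j+\gamma_j)$. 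The non-cancellation ingredient the paper actually uses at this exact point is the deterministic Lemma~\ref{lem-rotation-stretched-exponential}, which produces, in a window of bounded length $K$, an index $i$ at which the oscillatory sum is at least $\tfrac14$ of the maximal amplitude; combined with $a_{\lambda,m^\star-1}\neq 0$ from Lemma~\ref{lem-general-regression-coefficients}(b) and $\max_j|\nu_j|\geq L_1^{-1/2}$, that is what yields the uniform bound. Without an explicit analogue of that lemma (and a careful treatment of how $j$-indexed terms sharing the same angle $\theta_\lambda$ are combined before invoking it), your lower bound remains a plan, not a proof.
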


\begin{proof}
Let $\mathsf{A}_{(1)}:=\mathsf{A}(Q_{1})$ as in \eqref{eq-def-A} and note
that%
\begin{equation}
\mathbf{Y}_{n}=\mathsf{A}_{(1)}^{n-L_{1}}\mathbf{Y}_{L_{1}}+%
\sum_{j=L_{1}+1}^{n}\mathsf{A}_{(1)}^{n-j}\zeta _{j}\mathbf{e}_{L_{1}},
\label{eq-Yn-iteration}
\end{equation}%
where $\mathbf{e}_{L_{1}}\in \mathbb{R}^{L_{1}}$ is a vector with a unique
nonzero entry in the first coordinate (whose value is 1). Note that the set
of eigenvalues of $\mathsf{A}_{(1)}$ is exactly $\Lambda _{1}$. Thus by %
\eqref{eq-variance-zeta} and (\ref{eq-Yn-iteration}), we get that for any $%
0\leq i,j\leq L_{1},$ there is a constant $C^{\prime }<\infty $ such that%
\begin{equation*}
\left\vert \left( \mathsf{\Sigma }_{n}\right) _{(i,j)}\right\vert
=\left\vert \text{Cov}(X_{n-i},X_{n-j})\right\vert \leq C^{\prime }n^{2L},
\end{equation*}%
where in the last inequality we use the fact that $\Vert \mathsf{A}%
_{(1)}^{k}\Vert _{\infty }=O(1)k^{L_{1}}$ due to Lemma~\ref%
{lem-Jordan-matrix}. This gives the upper bound $\lambda _{\mathrm{max}%
}\left( \mathsf{\Sigma }_{n}\right) \leq Cn^{2L}$.

For the lower bound, we just need to show that there exists $c>0$ such that
for any $\nu =(\nu _{1},...,\nu _{L_{1}})$ with $\left\Vert \nu \right\Vert
_{2}=1$,%
\begin{equation*}
\var(\sum_{j=1}^{L_{1}}\nu _{j}X_{n+1-j})\geq cn^{2m^{\star }-2}.
\end{equation*}%
Notice that it is enough to prove it for $n$ large enough. Since $%
X_{n}=\sum_{j=1}^{n}b_{n,j}\xi _{j}$, we have%
\begin{equation*}
\sum_{j=1}^{L_{1}}\nu _{j}X_{n+1-j}=\sum_{j=1}^{L_{1}}\nu
_{j}\sum_{i=1}^{n+1-j}b_{n+1-j,i}\xi _{i}\text{,}
\end{equation*}%
which implies that for any fixed $K$, when $n$ is large enough we have%
\begin{equation}
\var(\sum_{j=1}^{L_{1}}\nu _{j}X_{n+1-j})\geq \var(\sum_{i=1}^{K}(%
\sum_{j=1}^{L_{1}}\nu _{j}b_{n+1-j,i})\xi _{i})\geq
r^{-K}\sum_{i=1}^{K}(\sum_{j=1}^{L_{1}}\nu _{j}b_{n+1-j,i})^{2}.  \notag
\end{equation}%
If we denote $\Lambda _{3}:=\{\lambda \in \Lambda _{1},m(\lambda )=m^{\star
}\}$, then%
\begin{eqnarray*}
b_{n+1-j,i} &=&\sum_{\lambda \in \Lambda _{3}}a_{\lambda ,m^{\star
}-1}\left( n+1-i-j\right) ^{m^{\star }-1}\cos (\left( n+1-i-j\right) \theta
_{\lambda }+\theta _{\lambda ,i})+o(n^{m^{\star }-1}) \\
&=&n^{m^{\star }-1}\sum_{\lambda \in \Lambda _{3}}a_{\lambda ,m^{\star
}-1}\cos (\left( n+1-i-j\right) \theta _{\lambda }+\theta _{\lambda
,i})+o(n^{m^{\star }-1}),
\end{eqnarray*}%
which implies that%
\begin{equation*}
\sum_{j=1}^{L_{1}}\nu _{j}b_{n+1-j,i}=n^{m^{\star
}-1}\sum_{j=1}^{L_{1}}\sum_{\lambda \in \Lambda _{3}}\nu _{j}a_{\lambda
,m^{\star }-1}\cos (\left( n+1-i-j\right) \theta _{\lambda }+\theta
_{\lambda ,i})+o(n^{m^{\star }-1}).
\end{equation*}%
Now, applying Lemma \ref{lem-rotation-stretched-exponential}, we can see
that there exists $K_{1}>0$ such that for any $n$, there exists $1\leq i\leq
K_{1}$ such that%
\begin{equation}
|\sum_{j=1}^{L_{1}}\sum_{\lambda \in \Lambda _{3}}\nu _{j}a_{\lambda
,m^{\star }-1}\cos (\left( n+1-i-j\right) \theta _{\lambda }+\theta
_{\lambda ,i})|\geq \frac{1}{4}\max_{1\leq j\leq L_{1},\lambda \in \Lambda
_{3}}|\nu _{j}a_{\lambda ,m^{\star }-1}|.  \label{eigen_lower_1}
\end{equation}%
Since $\left\vert \left\vert \nu \right\vert \right\vert _{2}=1$ implies $%
\max_{j=1}^{L}\left\{ \left\vert \nu _{j}\right\vert \right\} \geq 1/(4\sqrt{%
L_{1}})$, setting $K=K_{1}$ we conclude that%
\begin{eqnarray*}
\var(\sum_{j=1}^{L_{1}}\nu _{j}X_{n+1-j}) &\geq
&r^{-K_{1}}(\sum_{j=1}^{L_{1}}\nu _{j}b_{n+1-j,i})^{2} \\
&=&r^{-K_{1}}n^{2m^{\star }-2}\frac{1}{16L_{1}}\min_{\lambda \in \Lambda
_{3}}|a_{\lambda ,m^{\star }-1}|^{2}+o(n^{2m^{\star }-2}),
\end{eqnarray*}%
which gives the desired result.%
\end{proof}

\subsection{Upper bound\label{subsection-d-upper-bound}}

We now provide the proof of the upper bound on the persistence probability.

\begin{proof}[Proof of the upper bound: case (d)]
Continue to write $\mathsf{A}_{(1)}=\mathsf{A}(Q_{1})$. We note that since $%
Q_{1}(1)=0$, it follows that $\lambda =1$ is an eigenvalue of $\mathsf{A}%
_{(1)}$ with eigenvector $\mathbf{1}$. In light of \eqref{eq-variance-zeta},
we can choose $n^{\prime }:=\kappa \log N$ with $\kappa >0$ a large enough
constant such that $\var\zeta _{n}\leq N^{-13L}$ for all $n\geq n^{\prime }$%
, combined with (\ref{eq-Yn-iteration}) $\Corr\Vert \mathsf{A}%
_{(1)}^{k}\Vert _{\infty }=O(1)k^{L_{1}}\leq O(1)N^{L_{1}}$, we get%
\begin{equation*}
\var\left( \Vert \mathbf{Y}_{n}-\mathsf{A}_{(1)}^{n-n^{\prime }}\mathbf{Y}%
_{n^{\prime }}\Vert _{\infty }\right) \leq C^{\prime }N^{-10L},
\end{equation*}%
for some $C^{\prime }<\infty $. Therefore, we deduce that for a suitably
large constant $C<\infty $%
\begin{equation}
\mathbb{P}(\exists n^{\prime }\leq n\leq N:\Vert \mathbf{Y}_{n}-\mathsf{A}%
_{(1)}^{n-n^{\prime }}\mathbf{Y}_{n^{\prime }}\Vert _{\infty }\geq
N^{-4L})\leq C\mathrm{e}^{-N}\,,  \label{eq-remove-noise}
\end{equation}%
which suggests that the persistence of the process until time $N$ is mainly
determined by $\mathbf{Y}_{n^{\prime }}$. Let $\{\mathbf{v}_{\lambda
,j}:\lambda \in \Lambda _{1},j\in \lbrack m(\lambda )]\}$ be a basis (of
unit norm) for $\mathsf{A}_{(1)}$ as in the statement of Lemma~\ref%
{lem-Jordan-matrix}, then for $\mathbf{y}\in \mathbb{R}^{L_{1}}$, there
exists a unique $\{c_{\lambda ,r}(y):\lambda \in \Lambda _{1},r\in \lbrack
m(\lambda )]\}$ such that%
\begin{equation}
\mathbf{y}=\sum_{\lambda \in \Lambda _{1}}\sum_{r=1}^{m(\lambda )}c_{\lambda
,r}(\mathbf{y})\mathbf{v}_{\lambda ,r}\,.  \label{eq-expansion-y}
\end{equation}%
Define $\Pi \subseteq \mathbb{R}^{L_{1}}$ as%
\begin{equation}
\Pi :=\{\mathbf{y}:|c_{\lambda ,r}(\mathbf{y})|\leq (N^{m-r}(\log N)^{20L})%
\text{, }\mbox{ for
all }\lambda \in \Lambda _{1}\setminus \{1\},r\in \lbrack m(\lambda )]\,\}.
\label{eq-volume-small}
\end{equation}%
For $\mathbf{y}\in B_{(\log N)^{4L}}\setminus \Pi $ where $B_{(\log
N)^{4L}}\subseteq \mathbb{R}^{L_{1}}$ is a $L_{\infty }$ ball of radius $%
(\log N)^{4L}$, by the definition (\ref{eq-volume-small}) there exists $%
(\lambda ^{\star },r^{\star })$ such that $|c_{\lambda ^{\star },r^{\star }}(%
\mathbf{y})|\geq (N^{m-r^{\star }}(\log N)^{20L})$. Then by Lemma \ref%
{lem-polynomial-approximation}, there exists a constant $c>0$ and $N^{\star
}\in \{N/L,2N/L,\ldots ,N\}$ such that%
\begin{equation}
|\sum_{j=0}^{m(\lambda ^{\star })-1}\frac{(N^{\star })^{j}}{j!}(\lambda
^{\star })^{-j}c_{\lambda ^{\star },j+1}(\mathbf{y})|\geq cN^{m-1}(\log
N)^{20L}\,.  \label{eq-coefficients-large}
\end{equation}%
Denoting by $\mathbf{1}\in \mathbb{R}^{L_{1}}$ where each coordinate takes
value 1. Noting that $\left\Vert \mathbf{v}_{\lambda ,r}\right\Vert _{\infty
}\leq \left\Vert \mathbf{v}_{\lambda ,r}\right\Vert _{2}=1$ and $c_{\lambda
,r}\leq (\log N)^{4L}$, by Lemma \ref{lem-Jordan-matrix} we have that for
all $N^{\star }-\log N\leq N^{\prime }\leq N^{\star }$,%
\begin{equation}
\mathsf{A}_{(1)}^{N^{\prime }}\mathbf{y}\leq (\log N)^{5L}N^{m-1}\mathbf{1}%
+(1+O(\log N/N))\sum_{\lambda \in \Lambda _{1}\setminus
\{1\}}\sum_{r=1}^{m(\lambda )}\sum_{j=0}^{m(\lambda )-r}\frac{N^{\star j}}{j!%
}\lambda ^{N^{\prime }-j}c_{\lambda ,r+j}(\mathbf{y})\mathbf{v}_{\lambda
,r}\,,  \label{bound-A-1-N-prime-y}
\end{equation}%
where the "$\leq $" means entry-wise less than or equal to. For each $%
\lambda \in \Lambda _{1}\setminus \{1\}$ we can write $\lambda ^{N^{\prime
}-j}=\cos ((N^{\prime }-j)\theta _{\lambda })+i\sin ((N^{\prime }-j)\theta
_{\lambda })$, recalling that $\bar{c}_{\lambda ,j}=c_{\bar{\lambda},j}$,
applying Lemma~\ref{lem-rotation-stretched-exponential} and using %
\eqref{eq-coefficients-large}, with (\ref{bound-A-1-N-prime-y}) we deduce
that for large enough $N$ there exists a $N^{\star }\in (N^{\star }-\log
N,N^{\star })$ such that%
\begin{equation}
\min_{j\in \lbrack L_{1}]}(\mathsf{A}_{(1)}^{N^{\star }}\mathbf{y})_{j}\leq -%
\frac{c}{8}N^{m-1}(\log N)^{20L}\,.  \label{bound-min-A-1-N-y}
\end{equation}%
From the fact that $\var X_{n}=O(n^{2L})$ and Gaussian estimate, we have $%
\mathbb{P}(\mathbf{Y}_{n^{\prime }}\not\in B_{(\log N)^{4L}})\leq C^{\prime }%
\mathrm{e}^{-(\log N)^{2}/2}$ for some $C^{\prime }<\infty $, thus combined
with (\ref{bound-min-A-1-N-y}) and \eqref{eq-remove-noise}, it follows that
there exists $C<\infty $ such that%
\begin{align}
p_{N}& \leq C\mathrm{e}^{-N}+\mathbb{P}(\mathbf{Y}_{n^{\prime }}\not\in
B_{(\log N)^{4L}})+\mathbb{P}(\mathbf{Y}_{n^{\prime }}\in B_{(\log
N)^{4L}}\bigcap \Pi )  \notag  \label{eq-upper-bound-prelim-poly-2} \\
& \leq C\mathrm{e}^{-(\log N)^{2}}+\mathbb{P}(\mathbf{Y}_{n^{\prime }}\in
B_{(\log N)^{4L}}\bigcap \Pi )\,.
\end{align}%
It remains to bound the last term on the right hand side. Using %
\eqref{eq-volume-small}, we deduce that%
\begin{equation*}
\mathrm{vol}(B_{(\log N)^{4L}}\bigcap \Pi )=O((\log N)^{4L^{2}})N^{-\alpha
}\,,
\end{equation*}%
where $\alpha =\sum_{\lambda \in \Lambda ^{\star }}(m(\lambda
)-m)_{+}(m(\lambda )-m+1)_{+}/2$. Combined with Lemma~\ref{lem-eigenvalues},
by writing the probability as an integral against Gaussian density, it
follows that%
\begin{equation*}
\mathbb{P}(\mathbf{Y}_{n^{\prime }}\in B_{(\log N)^{4L}}\bigcap \Pi )\leq
(\log N)^{8L^{2}}N^{-\alpha }\,.
\end{equation*}%
Plugging the preceding inequality into \eqref{eq-upper-bound-prelim-poly-2}
completes the proof on the upper bound.
\end{proof}

\subsection{Lower bound\label{subsection-d-lower-bound}}

We next turn to the proof of the lower bound. For $C_{0}\geq 1$ and $%
0<\delta <1/r^{\star }$ to be selected, define $N_{1}:=C_{0}\log \log N$ and 
$N_{2}:=C_{0}\log N$, and the event%
\begin{equation*}
\Omega _{N_{1},\delta }:=\bigcap_{n=1}^{N_{1}}\{|X_{n}-1|\leq \delta
^{n}\}\,.
\end{equation*}%
Recall that $1$ is an eigenvalue of $\mathsf{A}(Q)$ of eigenvector $\mathbf{1%
}$. With the fact that $\var\xi _{n}=(r^{\star })^{-n}$ it is easy to verify
that for some $C_{1},C_{2}>0$ we have%
\begin{equation*}
\mathbb{P}(|X_{n}-1|\leq \delta ^{n}\mid
\bigcap\limits_{k=n-L}^{n-1}|X_{k}-1|\leq \delta ^{k})\geq C_{1}(r^{\star
}\delta )^{\frac{n}{2}}e^{-C_{2}(r^{\star }\delta )^{n}},
\end{equation*}%
which implies that for $N$ large enough we have%
\begin{equation}
\mathbb{P}(\Omega _{N_{1},\delta })\geq (\delta /4)^{N_{1}^{2}}\,.
\label{eq-N-1-delta}
\end{equation}%
In addition, we denote by $\mathsf{\Sigma }_{N_{2}}^{\star }$ the
conditional covariance matrix of $\mathbf{Y}_{N_{2}}$ given the $\sigma $%
-field $\mathcal{F}_{N_{1}}$ generated by $\{\xi _{1},\ldots ,\xi _{N_{1}}\}$%
. Denote by $\widetilde{X}_{n}:=X_{n+N_{1}}-E[X_{n+N_{1}}\mid \mathcal{F}%
_{N_{1}}]$ and $\widetilde{\xi }_{n}=\xi _{n+N_{1}}$. Then $\widetilde{X}_{n}
$ is a regressive process generated by $Q$ and $\{\widetilde{\xi }_{n}\}$.
In addition by (\ref{eq-variance-zeta}) we have Var$(\widetilde{\xi }%
_{n})=(r^{\star })^{-(n+N_{1})}$ with $r^{\star }>1$. Revoking the proof of
Lemma~\ref{lem-eigenvalues} and using the aforementioned information, it is
easy to see that we can keep the same upper bound of $\lambda _{\mathrm{max}%
}(\mathsf{\Sigma }_{N_{2}}^{\star })$; and the only change for the lower
bound of $\lambda _{\mathrm{min}}(\mathsf{\Sigma }_{N_{2}}^{\star })$ is
caused by the different order of Var$(\widetilde{\xi }_{n})$. Thus by the
last equation in the proof of Lemma~\ref{lem-eigenvalues} we get $\lambda _{%
\mathrm{min}}(\mathsf{\Sigma }_{N_{2}}^{\star })\geq c(r^{\star })^{-\frac{%
N_{1}}{2}}(N_{2}-N_{1})^{2m^{\star }-2}\geq N_{2}^{2m^{\star }-2-c^{\prime }}
$ for some $c^{\prime }<\infty $, and consequently for some $C<\infty $ we
have%
\begin{equation}
N_{2}^{2m^{\star }-2-c^{\prime }}\leq \lambda _{\mathrm{min}}(\mathsf{\Sigma 
}_{N_{2}}^{\star })\leq \lambda _{\mathrm{max}}(\mathsf{\Sigma }%
_{N_{2}}^{\star })\leq CN_{2}^{2L}\,.  \label{eq-conditional-eigenvalues}
\end{equation}%
\abbr{WLOG} we assume $\gamma _{0}:=m^{\star }-1-c^{\prime
}/2\leq -10L$. We define $\Pi _{N_{2}}\subseteq \mathbb{R}^{L_{1}}$ as%
{\small 
\begin{align}
\Pi _{N_{2}}& :=\{\mathbf{y}:|c_{\lambda ,r}(\mathbf{y})|\leq N_{2}^{\gamma
_{0}-1}(N^{m-r}\wedge 1),\mbox{ for
all }\lambda \in \Lambda _{1}\setminus \{1\},r\in \lbrack m(\lambda )]\,\} 
\notag \\
\cap & \{\mathbf{y}:c_{1,1}(\mathbf{y})\in (1-N_{2}^{\gamma
_{0}-1}/3,1+N_{2}^{\gamma _{0}-1}/3)\text{, }c_{1,j}(\mathbf{y})\in
N_{2}^{\gamma _{0}-1}(2L,3L)\mbox{ for }2\leq j\leq m\,\},
\label{eq-volum-small-lbd}
\end{align}%
}where we use the expansion of \textbf{$y$} as in \eqref{eq-expansion-y} and 
$c_{1,1}$ is the coefficient of $\mathbf{1}$ for \textbf{$y$} in the
expansion. Let $\alpha =\sum_{\lambda \in \Lambda ^{\star }}(m(\lambda
)-m)_{+}(m(\lambda )-m+1)_{+}/2$. It is clear that%
\begin{equation}
\mathrm{vol}(\Pi _{N_{2}})\geq N^{-\alpha +o(1)}\,.  \label{eq-volume-Pi-2}
\end{equation}%
We are now ready to provide

\begin{proof}[Proof of the lower bound: Case (d)]
Using the same method of \eqref{eq-remove-noise}, with the definition of $%
N_{1},N_{2}$ we see that there exists $C<\infty $ such that%
\begin{equation}
\begin{aligned} \mathbb{P}(\exists n \in [N_1, N_2]: \|\mathbf{Y}_n -
A_{(1)}^{n - N_1} \mathbf{Y}_{N_1}\|_\infty \geq N_2^{-4L}) &\leq C
\mathrm{e}^{-(\log N)^2}\,,\\ \mathbb{P}(\exists n \in [N_2 , N]:
\|\mathbf{Y}_n - A_{(1)}^{n - N_2} \mathbf{Y}_{N_2}\|_\infty \geq
N^{-4L})&\leq C\mathrm{e}^{-N} \,.\end{aligned}  \label{two-Y-bounds}
\end{equation}%
Expanding $Y_{N_{1}}-\mathbf{1}$ in the basis of eigenvectors $\left\{
v_{\lambda ,r}\right\} $ of $\mathsf{A}_{(1)}$ as%
\begin{equation*}
\mathbf{Y}_{N_{1}}-\mathbf{1}=\sum_{\lambda \in \Lambda
_{1}}\sum_{r=1}^{m(\lambda )}\widetilde{c}_{\lambda ,r}\mathbf{v}_{\lambda
,r},
\end{equation*}%
we have that under $\Omega _{N_{1},\delta }$,%
\begin{equation*}
\sum_{\lambda \in \Lambda _{1}}\sum_{r=1}^{m(\lambda )}\widetilde{c}%
_{\lambda ,r}^{2}=\left\Vert \mathbf{Y}_{N_{1}}-\mathbf{1}\right\Vert
_{2}\leq L_{1}\delta ^{2N_{1}-2L_{1}},
\end{equation*}%
consequently for any $\lambda \in \Lambda _{1}$ and $r$%
\begin{equation}
\left\vert \widetilde{c}_{\lambda ,r}\right\vert \leq \sqrt{L_{1}}\delta
^{N_{1}-L_{1}}.  \label{bound-c-lambda-r}
\end{equation}%
For any $N_{1}<n\leq N_{2}$, in light of Lemma \ref{lem-Jordan-matrix} we
have that%
\begin{equation}
\mathsf{A}_{(1)}^{n-N_{1}}\mathbf{Y}_{N_{1}}-\mathbf{1}=\sum_{\lambda \in
\Lambda _{1}}\sum_{r=1}^{m(\lambda )}\sum_{j=0}^{m(\lambda )-r}\binom{n-N_{1}%
}{j}\lambda ^{n-N_{1}-j}\widetilde{c}_{\lambda ,r+j}\mathbf{v}_{\lambda ,r}.
\label{eq-expansion-A-1-Y-N-1-1}
\end{equation}%
Hence, with the fact that $\left\vert \lambda \right\vert \leq 1$ and (\ref%
{bound-c-lambda-r}) we see that%
\begin{equation}
\left\Vert \mathsf{A}_{(1)}^{n-N_{1}}\mathbf{Y}_{N_{1}}-\mathbf{1}%
\right\Vert _{\infty }\leq L_{1}\max_{j\leq L_{1}}\binom{n-N_{1}}{j}%
\left\vert \widetilde{c}_{\lambda ,r+j}\right\vert \left\Vert \mathbf{v}%
_{\lambda ,r}\right\Vert _{\infty }\leq L_{1}^{\frac{3}{2}%
}N_{2}^{L_{1}}\delta ^{N_{1}-L_{1}}.  \label{bound-A-1-Y-N-1-1}
\end{equation}%
At this point, we choose $\delta =e^{m^{\star }-L_{1}-2-c^{\prime }/2}\wedge
e^{-2}$ and consequently%
\begin{equation}
L_{1}^{\frac{3}{2}}N_{2}^{L_{1}}\delta ^{N_{1}-L_{1}}<(N_{2}^{\gamma
_{0}}\wedge N_{2}^{-1})/3\text{ as }N\rightarrow \infty .\,
\label{delta_bound}
\end{equation}%
Hence when $N$ is large enough, under $\Omega _{N_{1},\delta }$, for any $%
N_{1}<n\leq N_{2}$ we have $||\mathsf{A}_{(1)}^{n-N_{1}}\mathbf{Y}_{N_{1}}-%
\mathbf{1||}_{\infty }<1/2$, and consequently $\mathsf{A}_{(1)}^{n-N_{1}}%
\mathbf{Y}_{N_{1}}\geq 1/2\times \mathbf{1}$. Similarly, under $\Pi _{N_{2}}$
of (\ref{eq-volum-small-lbd}),\ for any $N_{2}<n\leq N$ we have\ $\mathsf{A}%
_{(1)}^{n-N_{2}}\mathbf{Y}_{N_{2}}\geq 1/2\times \mathbf{1}$. Therefore, we
deduce from (\ref{two-Y-bounds}) that for $N$ large enough%
\begin{equation}
p_{N}\geq \mathbb{P}(\Omega _{N_{1},\delta }\bigcap \{\mathbf{Y}_{N_{2}}\in
\Pi _{N_{2}}\})-O(1)\mathrm{e}^{-(\log N)^{2}}\,.  \label{eq-lbd-case4b}
\end{equation}%
From (\ref{eq-conditional-eigenvalues}), (\ref{eq-volum-small-lbd}), (\ref%
{bound-A-1-Y-N-1-1}) and (\ref{delta_bound}), we observe that for any $\Vert 
$\textbf{$y$}$_{N_{1}}-\mathbf{1}\Vert _{\infty }\leq \delta ^{N_{1}}$ and 
\textbf{$y$}$_{N_{2}}\in \Pi _{N_{2}}$, as $N$ is large enough%
\begin{equation*}
\Vert \mathbf{y}_{N_{2}}-\mathsf{A}_{(1)}^{N_{2}-N_{1}}\mathbf{y}%
_{N_{1}}\Vert _{\infty }\leq \Vert \mathbf{y}_{N_{2}}-\mathbf{1}\Vert
_{\infty }+\Vert \mathsf{A}_{(1)}^{N_{2}-N_{1}}\mathbf{y}_{N_{1}}-\mathbf{1}%
\Vert _{\infty }<N_{2}^{\gamma _{0}}\leq \sqrt{\lambda _{\mathrm{min}}(%
\mathsf{\Sigma }_{N_{2}}^{\star })}.
\end{equation*}%
Hence, there exists $C>0$ such that the conditional density of $\mathbf{Y}%
_{N_{2}}$ restricted to $\Pi _{N_{2}}$ given \textbf{$y$}$_{N_{1}}$with in $%
\Omega _{N_{1},\delta }$ satisfies%
\begin{eqnarray*}
&&p_{\mathbf{Y}_{N_{2}}\mid \mathbf{Y}_{N_{1}}}(\mathbf{y}_{N_{2}}\mid 
\mathbf{y}_{N_{1}}) \\
&=&\left( 2\pi \right) ^{-\frac{L_{1}}{2}}\left( \det (\mathsf{\Sigma }%
_{N_{2}}^{\star })\right) ^{-\frac{1}{2}}\text{exp}\left( -\frac{1}{2}\left( 
\mathbf{y}_{N_{2}}-\mathsf{A}_{(1)}^{N_{2}-N_{1}}\mathbf{y}_{N_{1}}\right)
^{T}(\mathsf{\Sigma }_{N_{2}}^{\star })^{-1}\left( \mathbf{y}_{N_{2}}-%
\mathsf{A}_{(1)}^{N_{2}-N_{1}}\mathbf{y}_{N_{1}}\right) \right)  \\
&\geq &C(\lambda _{\mathrm{max}}(\mathsf{\Sigma }_{N_{2}}^{\star }))^{-\frac{%
L_{1}}{2}},
\end{eqnarray*}%
where the last inequality is due to%
\begin{equation*}
v^{\prime }\mathsf{\Sigma }^{-1}v\leq \frac{\left\vert \left\vert
v\right\vert \right\vert _{2}^{2}}{\lambda _{\min }(\mathsf{\Sigma })}\leq
L_{1}\frac{\left\vert \left\vert v\right\vert \right\vert _{\infty }^{2}}{%
\lambda _{\min }(\mathsf{\Sigma })}\leq L_{1}\text{.}
\end{equation*}%
Therefore, from \eqref{eq-N-1-delta}, \eqref{eq-conditional-eigenvalues} and %
\eqref{eq-volume-Pi-2} we see that for a constant $C>0$%
\begin{equation*}
\mathbb{P}(\Omega _{N_{1},\delta }\bigcap \{Y_{\mathbf{N}_{2}}\in \Pi
_{N_{2}}\})\geq \mathbb{P}(\Omega _{N_{1},\delta })\cdot C(\lambda _{\mathrm{%
max}}(\mathsf{\Sigma }_{N_{2}}^{\star }))^{-\frac{L_{1}}{2}}\mathrm{vol}(\Pi
_{N_{2}})\geq N^{-\alpha +o(1)}\,.
\end{equation*}%
Thus \eqref{eq-lbd-case4b} completes the proof.
\end{proof}

\section{Approximately \textsc{\lowercase{IRW}}: Theorem \protect\ref{thm-1}
(e)\label{section-app-int-rw}}

We first show the upper bound in Subsection \ref{subsection-e-upper-bound},
by comparing the process with an order $m$ \textsc{\lowercase{IRW}}. For the
lower bound, we prove it for two different cases: the case with dominant
zero-angle component, and the case with competing oscillatory components, in
Subsection \ref{subsection-e-lower-bound-1} and \ref%
{subsection-e-lower-bound-2} respectively.

\subsection{Upper bound\label{subsection-e-upper-bound}}

Next lemma states that the persistence probability for an \textsc{%
\lowercase{IRW}} of any order has polynomial decay, which we need later.

\begin{lemma}
\label{lem-integrated-positive} Let $\{Y_{n}:n=1,\ldots ,N\}$ be \textsc{%
\lowercase{AR}}$((z-1)^{m+1})$, and assume $K$ is a fixed positive integer.
Then there exist constants $c$,$C>0$ such that for any $N\geq K$,%
\begin{equation}
\mathbb{P}(Y_{n}\geq 0\mbox{ for all }K\leq n\leq N)\leq cN^{-C}.
\label{Jun_add_1}
\end{equation}
\end{lemma}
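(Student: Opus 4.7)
Writing $M := m+1$, the process $(Y_n)$ is the $M$-fold integrated Gaussian random walk, with explicit representation $Y_n = \sum_{i=1}^{n}\binom{n-i+m}{m}\xi_i$ under zero initial conditions. The plan is a dyadic blocking argument combined with a Donsker-type scaling limit.

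First I would establish the scaling $\widetilde Y^{(n)}_t := n^{-(m+1/2)}Y_{\lfloor nt\rfloor}$ converges in distribution (in $C[0,T]$ for any fixed $T\geq 1$) to the non-degenerate centered Gaussian process $Y^*_t := \tfrac{1}{m!}\int_0^t (t-s)^m\, dB_s$, the $M$-fold iterated integral of Brownian motion; tightness follows from Kolmogorov's criterion using the polynomial covariance structure. Next, a self-similarity / anti-concentration argument shows that $q_T := \P(Y^*_t \geq 0 \ \forall t \in [1,T])$ satisfies $q_T \to 0$ as $T\to\infty$: since $Y^*_T\sim N(0,cT^{2m+1})$, running the persistence across $\log_2 T$ dyadic sub-intervals shows $q_T \leq q_2^{\lfloor \log_2 T\rfloor}$ for some $q_2 < 1$.

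The main step is then to fix $T$ large enough that $q_T \leq 1/4$, set $n_k := T^k$, and introduce the block events $\mathcal{E}_k := \{Y_\ell \geq 0 \ \forall \ell \in [n_k, n_{k+1}]\}$. By the invariance principle applied to the Gaussian vector $\mathbf{Y}_n := (Y_{n-m},\ldots,Y_n)$ rescaled by $n^{m+1/2}$, together with the weak-convergence bound $q_T \leq 1/4$, for any fixed constant $A$ there exists $n_0=n_0(A,T)$ such that for all $k$ with $n_k\geq n_0$,
\[
\P\bigl(\mathcal{E}_k \,\big|\, \mathbf{Y}_{n_k}=\mathbf{y}\bigr) \leq 1/2
\quad\text{whenever } \|\mathbf{y}\|_\infty \leq A\, n_k^{m+1/2}.
\]
Gaussian concentration of $\mathbf{Y}_{n_k}$ bounds the atypical event $\{\|\mathbf{Y}_{n_k}\|_\infty > A n_k^{m+1/2}\}$ by $e^{-cA^2}$. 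Combining the two estimates and iterating across $K_N := \lfloor \log_T(N/K)\rfloor \asymp \log N$ blocks, using the Markov property of $(\mathbf{Y}_n)$, yields
\[
\mathbb{P}\bigl(Y_n\geq 0\ \forall\, K\leq n\leq N\bigr) \leq \bigl(1/2 + e^{-cA^2}\bigr)^{K_N} \leq c\, N^{-C}
\]
for suitable $c,C>0$.

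The main obstacle is making the per-block conditional bound genuinely uniform over starting states: an atypically large positive state $\mathbf{Y}_{n_k}$ makes persistence on the next block close to $1$, so the naive invariance-principle bound only covers states bounded by $A n_k^{m+1/2}$. The fix is the two-tier decomposition sketched above — the scaled invariance principle handles typical states via $q_T$, while a Gaussian tail bound on $\mathbf{Y}_{n_k}$ itself disposes of atypical ones — which is exactly the structure exploited in reverse in the stretched-exponential lower-bound construction of the preceding section.
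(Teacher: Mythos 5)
Your high-level plan -- restrict to $\asymp\log N$ dyadic time points, obtain a per-block "loss" bounded away from $1$, and iterate -- is the right shape, and it matches the paper's overall strategy. But the specific mechanism you use to extract the per-block loss has genuine gaps, and the paper's mechanism is quite different and cleaner.

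The first gap is Step 2: you assert $q_T\leq q_2^{\lfloor\log_2 T\rfloor}$ by "running the persistence across $\log_2 T$ dyadic sub-intervals." For the $m$-fold integrated Brownian motion the dyadic blocks are not independent, and because the process has positive correlations an FKG-type inequality actually pushes in the \emph{opposite} direction: persistence over the union of blocks is $\geq$ the product of the unconditional block persistences. What you would need is a uniform bound on the \emph{conditional} persistence over each block given the Markov state at its left endpoint -- but that is exactly the "main obstacle" you identify later, and you are implicitly using the conclusion ($q_T\to0$) to patch it. The variance fact $Y^*_T\sim N(0,cT^{2m+1})$ has nothing to do with the claim. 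As written, Step 2 is circular or simply unjustified.

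The second gap is the quantifier order in Step 3. You fix $T$ so that $q_T\leq 1/4$, and only afterwards pick $A$ and claim $\P(\mathcal E_k\mid\mathbf Y_{n_k}=\mathbf y)\leq 1/2$ whenever $\|\mathbf y\|_\infty\leq A n_k^{m+1/2}$. For a starting state of magnitude $A n_k^{m+1/2}$ the deterministic extrapolation keeps the process at height $\approx A n_k^{m+1/2}$, while the conditional standard deviation after $\ell$ steps is $\asymp\ell^{m+1/2}$; to overcome the shift you need $\ell\gtrsim A^{2/(2m+1)}n_k$, i.e.\ $T$ must grow with $A$. So the quantifiers are in the wrong order and there is no single $T$ that makes your two error terms simultaneously small. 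There is also a smaller technical issue: rescaling all $m+1$ components of $\mathbf Y_n$ by $n^{m+1/2}$ degenerates the state (all components converge to the same scalar), so the invariance principle in the form you invoke does not retain the derivative information that the conditional law actually depends on -- you would need different powers of $n$ for the different lagged differences.

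The paper sidesteps all of this: it computes directly that for $n_k = 2^k$ one has $\rho(Y_{n_i},Y_{n_j})\leq 2^{1-|i-j|/2}(1+o(1))$, so the $\log_2 N$ dyadic values form a positively correlated Gaussian vector with bounded row sums of the correlation matrix, and then invokes Lemma~\ref{lem-almost-independence-exponential}, which gives exponential decay in the number of variables for exactly this situation (its proof uses FKG plus a conditional-expectation estimate, i.e.\ it turns the positive correlations into a genuine per-variable loss without ever conditioning on states or invoking an invariance principle). That lemma is precisely the substitute for your missing uniform per-block bound, and using it avoids both the circularity in Step~2 and the quantifier problem in Step~3.
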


\begin{proof}
It is obvious that we just need to show (\ref{Jun_add_1}) for $N$ large
enough. We denote $n_{k}=2^{k}$, and notice that $\{Y_{n}\geq 0%
\mbox{ for
all }K\leq n\leq N\}$ implies $\{Y_{n_{k}}\geq 0\mbox{ for all }\lbrack \log
K]+1\leq k\leq \lbrack \log N]\}$. We calculate the correlation between $%
Y_{n_{i}}$ and $Y_{n_{j}}$ $(i<j)$ as follows,%
\begin{equation*}
\rho (Y_{n_{i}},Y_{n_{j}})=\rho (\sum\limits_{\ell =1}^{n_{i}}b_{n_{i},\ell
}\xi _{\ell },\sum\limits_{\ell =1}^{n_{j}}b_{n_{j},\ell }\xi _{\ell })=%
\frac{\sum_{\ell =1}^{n_{i}}b_{n_{i},\ell }b_{n_{j},\ell }}{\sqrt{\sum_{\ell
=1}^{n_{i}}b_{n_{i},\ell }^{2}}\sqrt{\sum_{\ell =1}^{n_{j}}b_{n_{j},\ell
}^{2}}}.
\end{equation*}%
Since $b_{n,\ell }=a_{1,m}(n-\ell )^{m}(1+o(n-\ell ))$, we get for large $%
i,j $ that{\small 
\begin{equation}
\rho (Y_{n_{i}},Y_{n_{j}})=\frac{\sum_{\ell =1}^{n_{i}}(n_{i}-\ell
)^{m}(n_{j}-\ell )^{m}}{\sqrt{\sum_{\ell =1}^{n_{i}}(n_{i}-\ell )^{2m}}\sqrt{%
\sum_{\ell =1}^{n_{j}}(n_{j}-\ell )^{2m}}}(1+o(1))\leq 2\left( \frac{n_{i}}{%
n_{j}}\right) ^{\frac{1}{2}}(1+o(1))=2^{1-\left\vert i-j\right\vert
/2}(1+o(1)).  \label{lem-intpos_eq1}
\end{equation}%
} From (\ref{lem-intpos_eq1}) we see that there exists $K_{0}>0$ such that $%
\rho (Y_{n_{i}},Y_{n_{j}})>0$ for all $i,j>K_{0}$, and $\sum_{i,j>K_{0}}^{k}%
\rho (Y_{n_{i}},Y_{n_{j}})\leq \Delta (k-K_{0})$ for some $\Delta <\infty $
and all $k\geq K_{0}$. Combined with Lemma~\ref%
{lem-almost-independence-exponential}, we see that there exist $c,C>0$ such
that%
\begin{equation*}
\mathbb{P}(Y_{n}\geq 0\mbox{ for all }K_{0}\leq n\leq N)\leq \mathbb{P}%
(Y_{n_{k}}\geq 0\mbox{ for all }K_{0}<k\leq \lbrack \log N])\leq ce^{-C\log
N}=cN^{-C}\text{.}
\end{equation*}%
Since $K_{0}$ is independent of $N$, the proof is completed.
\end{proof}

Now we give an upper bound on the persistence probability.

\begin{lemma}
\label{lem-poly-upper-bound}Suppose that $\Lambda \subset \mathbb{D}$ and $%
m(1)\geq m(\lambda )$ for all $\lambda \in \Lambda ^{\star }$. Then there
exist constants $c,C>0$ such that $p_{N}\leq cN^{-C}$ for all $N\in \mathbb{N%
}$.
\end{lemma}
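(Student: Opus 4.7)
The plan is to show $p_N \le CN^{-c}$ by subsampling $(X_n)$ at the dyadic times $n_k = 2^k$ and reducing, via Slepian's Lemma, to an auxiliary Gaussian process whose persistence is controlled by Lemma~\ref{lem-almost-independence-exponential}. This is the analogue for a general \abbr{AR}$(Q)$ process of the argument used in Lemma~\ref{lem-integrated-positive} for a pure \abbr{IRW}; the additional work is to verify that every zero of $Q(\cdot)$ other than $\lambda=1$ (of multiplicity $m:=m(1)=m^\star$) contributes only subdominantly to the dyadic-scale covariances.

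First, use Lemma~\ref{lem-general-regression-coefficients}(b) to split
\[
h_n = a_{1,m-1}\, n^{m-1} + \sum_{\lambda \in \Lambda^\star \setminus \{1\}} \sum_{j=0}^{m(\lambda)-1} a_{\lambda,j}\, n^j \cos(n\theta_\lambda + \theta_{\lambda,j}) + O(\rho^n n^L),
\]
where $\rho \in (0,1)$ absorbs the zeros strictly inside the unit disk. Since $m(\lambda) \le m$ for every $\lambda \in \Lambda^\star$, the leading positive term dominates and one verifies $\var(X_n) \asymp n^{2m-1}$. For $i<j$, expand
\[
\Cov(X_{n_i},X_{n_j}) = \sum_{\ell<n_i} h_{n_i-\ell}\, h_{n_j-\ell}
\]
and apply the convolution asymptotic of Lemma~\ref{lemma-convolution}. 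The $\lambda=1$ self-term produces the \abbr{IRW}-type main contribution of order $n_i^m n_j^{m-1}$. Cross-terms involving $\lambda \ne 1$ split, via $\cos A\cos B = \tfrac12[\cos(A-B)+\cos(A+B)]$, into an $\ell$-oscillating remainder (suppressed by van~der~Corput-type cancellation) and an $\ell$-independent piece proportional to $\cos((n_j-n_i)\theta_\lambda)$ of order at most $n_i^{m(\lambda)} n_j^{m(\lambda)-1}$. Normalising by $\sqrt{\var(X_{n_i})\var(X_{n_j})}$ yields the uniform bound $|\rho(X_{n_i},X_{n_j})| \le C \cdot 2^{-(j-i)/2}$ for $K_0 \le i < j$.

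To upgrade this absolute bound to a persistence estimate, introduce a centered Gaussian process $(V_k)_{K_0 \le k \le \log_2 N}$ with $\var(V_k)=\var(X_{n_k})$ and $\Cov(V_i,V_j) := C'\, 2^{-|i-j|/2}\sqrt{\var(V_i)\var(V_j)}$, where $C'$ is chosen large enough that $\Cov(V_i,V_j) \ge \Cov(X_{n_i},X_{n_j})$ for every $i,j$ (positive-definiteness being a standard geometric-Toeplitz calculation). Then by Slepian's Lemma
\[
\mathbb{P}\bigl(X_{n_k}\ge 0,\ K_0 \le k \le \log_2 N\bigr) \le \mathbb{P}\bigl(V_k\ge 0,\ K_0 \le k \le \log_2 N\bigr).
\]
The correlations of $(V_k)$ are non-negative with $\sum_{i,j}\rho(V_i,V_j) \le \Delta (\log_2 N)$ for some $\Delta<\infty$, so Lemma~\ref{lem-almost-independence-exponential} bounds the right-hand side by $C'' e^{-(\log_2 N)/(200\Delta)} = C'' N^{-c}$. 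Since $\Omega_N^+ \subseteq \{X_{n_k}\ge 0,\ K_0 \le k \le \log_2 N\}$, this gives the claimed upper bound.

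The main obstacle is the convolution asymptotic in the second paragraph, which must track the coefficients of all oscillating contributions carefully enough to deliver the $C\cdot 2^{-(j-i)/2}$ bound on $|\rho(X_{n_i},X_{n_j})|$. The case where $m(\lambda)=m$ for some $\lambda\in\Lambda^\star\setminus\{1\}$ is the hardest, since the corresponding self-contribution is of the same polynomial order as the \abbr{IRW} main term while carrying the only-absolutely-bounded factor $\cos((n_j-n_i)\theta_\lambda)$; the Slepian comparison to $(V_k)$ is precisely the device that absorbs this oscillatory indefiniteness into a dominating non-negative correlation structure.
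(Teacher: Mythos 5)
Your overall strategy is sound and differs from the paper's route. The paper invokes Slepian's Lemma once, at \emph{all} times $n\ge K$, to dominate the covariance of $X$ by that of an \textsc{\lowercase{IRW}} of order $m+1$: since $(Y_n)=$\textsc{\lowercase{AR}}$((z-1)^{m+1})$ has strictly positive dyadic-scale correlations, one can then apply Lemma~\ref{lem-almost-independence-exponential} directly inside the \textsc{\lowercase{IRW}} persistence bound (Lemma~\ref{lem-integrated-positive}); no artificial comparison process is needed. You instead estimate the dyadic correlations of $X$ itself and try to dominate them by a synthetic geometric-Toeplitz Gaussian vector $(V_k)$. Both arguments correctly identify Slepian's Lemma as the device that absorbs the oscillatory indefiniteness coming from complex zeros with $m(\lambda)=m$, and your dyadic covariance bound $|\rho(X_{n_i},X_{n_j})|=O(2^{-(j-i)/2})$ is correct (from $|\Cov(X_{n_i},X_{n_j})|=O(n_i^m n_j^{m-1})$ and $\var(X_n)\asymp n^{2m-1}$).

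However, the construction of $(V_k)$ as stated has a genuine gap. You need $C'$ large enough that $C'\,2^{-|i-j|/2}\ge \rho(X_{n_i},X_{n_j})$ for every $i\ne j$, and the sharpest constant in your covariance estimate is strictly larger than $1$ (it comes from the \textsc{\lowercase{IRW}}-type term whose asymptotic ratio constant is $(2m-1)/m$). With any such $C'>1$ you get $\rho(V_i,V_{i+1})=C'2^{-1/2}$, which can exceed $1$, so $(V_k)$ cannot exist as a Gaussian vector; the phrase ``positive-definiteness being a standard geometric-Toeplitz calculation'' assumes $C'\le 1$. The bound $C\,2^{-(j-i)/2}$ on $|\rho(X_{n_i},X_{n_j})|$ is only a \emph{tail} estimate and must be used together with the trivial cap $|\rho|\le 1$; dominating $\min(1,C\,2^{-(j-i)/2})$ by a positive-definite correlation sequence requires an extra step. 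The cleanest fix, and one the paper itself uses in a parallel place (Corollary~\ref{cor-almost-independence-exponential}), is to coarsen: choose $k_0$ with $C\,2^{-k_0/2}<1$ and sample only at $\tilde n_\ell:= n_{k_0\ell}$, so that $\rho(X_{\tilde n_\ell},X_{\tilde n_{\ell'}})\le (C\,2^{-k_0/2})^{|\ell-\ell'|}=:r^{|\ell-\ell'|}$ with $r<1$, and compare to the \textsc{\lowercase{AR}}$(1)$-type vector with correlation $r^{|\ell-\ell'|}$, which is positive definite. The number of coarsened dyadic times is still of order $\log N$, so Lemma~\ref{lem-almost-independence-exponential} still yields a power-law upper bound. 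With this repair your argument goes through; as written, it does not.
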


\begin{proof}
By our assumption, the zero set $\Lambda $ for the generating polynomial $%
Q(z)$ satisfies that $\Lambda \subseteq \mathbb{D}$. Let $m=\max_{\lambda
\in \Lambda }m(\lambda )$, and let $(Y_{n})$ be \textsc{\lowercase{AR}}($%
(z-1)^{m+1}$). It is then straightforward to apply Lemma~\ref%
{lem-general-regression-coefficients} and verify that for a number $K>0$
(independent of $N$),%
\begin{equation*}
\rho (X_{n},X_{m})\leq \rho (Y_{n},Y_{m})\mbox{ for all }n,m\geq K\,.
\end{equation*}%
Combined with Slepian's Lemma, it follows that%
\begin{equation*}
p_{N}\leq \mathbb{P}(Y_{n}\geq 0\mbox{ for all }K\leq n\leq N)\,,
\end{equation*}%
which completes the proof together with Lemma~\ref{lem-integrated-positive}.%
\label{lem-reduce-to-BM copy(1)}
\end{proof}

\subsection{Lower bound with competing oscillatory components\label%
{subsection-e-lower-bound-2}}

Here we show the lower bound of first case in part (e), where $r^{\star }=1$
and $\exists \lambda \in \Lambda ^{\star }\setminus \{1\}\mbox{ such that }%
m(\lambda )=m(1)$. In the proof the key is Lemma \ref{lem-rotation}. Before
stating the lemma let's introduce some notations. Suppose that $\Lambda
^{\star }=\{\lambda _{0}|_{m(1)},\lambda _{1}|_{m_{1}},\ldots ,\lambda
_{\ell }|_{m_{\ell }}\}$ where $\lambda _{0}=1$. Write $\lambda _{j}=\mathrm{%
e}^{\sqrt{-1}\theta _{j}}$ for $j\in \lbrack \ell ]$. By Lemma~\ref%
{lem-general-regression-coefficients}, we could write%
\begin{equation}
X_{n}=\sum_{j=0}^{\ell }c_{j}T_{j,n}+R_{n}\,,  \label{eq-X-n-expansion}
\end{equation}%
where $c_{j}\neq 0$, $T_{j,n}:=\sum_{i=1}^{n}b_{n,i,m_{j}-1}\cos
((n-i)\theta _{j}+\theta _{\lambda _{j},m_{j}-1})\xi _{i}$ and $%
R_{n}=\sum_{i=1}^{n}O((n-i)^{m(1)-2})\xi _{i}$. Now, for $j\in \lbrack \ell
] $ we define%
\begin{equation}
T_{j,n}^{\prime }:=\sum_{i=1}^{n}b_{n,i,m_{j}-1}\sin ((n-i)\theta
_{j}+\theta _{\lambda _{j},m_{j}-1})\xi _{i}\mbox{ and }\mathbf{T}%
_{j,n}:=[T_{j,n},T_{j,n}^{\prime }]^{T}\,.  \label{eq-def-S}
\end{equation}%
In addition, for $k\geq 1$ we write $\mathbf{T}%
_{j,n,k}=[T_{j,n,k},T_{j,n,k}^{\prime }]^{T}$ where%
\begin{equation}
T_{j,n,k}=\sum_{i=1}^{n}b_{n,i,k}\cos ((n-i)\theta _{j}+\theta _{\lambda
_{j},m_{j}-1})\xi _{i}\,,\,T_{j,n,k}^{\prime }=\sum_{i=1}^{n}b_{n,i,k}\sin
((n-i)\theta _{j}+\theta _{\lambda _{j},m_{j}-1})\xi _{i}\,.
\label{eq-def-T-small-order}
\end{equation}%
The difference between (\ref{eq-def-T-small-order}) and (\ref{def-new-T}) is
that here we replace $\theta _{\lambda _{j},k}$ by $\theta _{\lambda
_{j},m_{j}-1}$. Due to (\ref{eq-b-iterative}) it is easy to see that for all 
$k\geq 2$%
\begin{equation}
\mathbf{T}_{j,n+1,k}=\mathsf{R}_{\theta _{j}}\mathbf{T}_{j,n,k}+\mathbf{T}%
_{j,n+1,k-1}\,,  \label{eq-R_theta-general}
\end{equation}%
by noting that $T_{j,n}=T_{j,n,m_{j}-1}$,$\,$especially%
\begin{equation}
\mathbf{T}_{j,n+1}=\mathsf{R}_{\theta _{j}}\mathbf{T}_{j,n}+\mathbf{T}%
_{j,n+1,m_{j}-2}\,\,.  \label{eq-R_theta}
\end{equation}%
Write $\mathbf{T}_{n}$ as a $2\ell $-dimensional vector such that $\mathbf{T}%
_{n}=[\mathbf{T}_{1,n},\ldots ,\mathbf{T}_{\ell ,n}]^{T}$, and in the
similar manner write $\mathbf{t}=[\mathbf{t}_{1},\ldots ,\mathbf{t}_{\ell
}]^{T}$ where $\mathbf{t}_{j}\in \mathbb{R}^{2}$ for each $j\in \lbrack \ell
]$. For any fixed $K\in \mathbb{N}$, define $\phi _{K}:\mathbb{R}^{2\ell
}\mapsto \mathbb{R}$ by%
\begin{equation}
\phi _{K}(\mathbf{t}):=\min_{0\leq i\leq K}[1,0]\sum_{j=1}^{\ell }c_{j}(%
\mathsf{R}_{\theta _{j}})^{i}\mathbf{t}_{j}\mbox{ for }\mathbf{t}\in \mathbb{%
R}^{2\ell }\,.  \label{eq-def-phi}
\end{equation}

\iffalse
\abbr{WLOG} we assume $\theta _{1,0}=0$ and $c_{0}>0$, then
we have that%
\begin{equation}
T_{0,n+k}(1,n)-T_{0,n}(1,n)=\sum_{i=1}^{n}O((n-i)^{m(1)-2})\xi _{i}%
\mbox{
for all }0\leq k\leq K.  \label{bound-T-0-ni}
\end{equation}%
For $j\in \lbrack \ell ]$, from (\ref{eq-def-T-small-order}) and the
condition $m_{j}\leq m(1)$, obviously%
\begin{equation}
\left\vert T_{j,n+k,m_{j}-2}(1,n)\right\vert +|T_{j,n+k,m_{j}-2}^{\prime
}(1,n)|=\sum_{i=1}^{n}O((n-i)^{m(1)-2})\xi _{i}\,\mbox{ for all }0\leq k\leq
K.  \label{bound-T-mj-1}
\end{equation}%
With (\ref{eq-R_theta}) it is implied that%
\begin{equation}
T_{j,n+k}(1,n)=[1,0](\mathsf{R}_{\theta _{j}})^{k}\mathbf{T}%
_{j,n}+\sum_{i=1}^{n}O((n-i)^{m(1)-2})\xi _{i}\mbox{ for all }0\leq k\leq K.
\label{bound-T-ni-condition}
\end{equation}%
Combining with (\ref{eq-X-n-expansion}), (\ref{eq-def-phi}), (\ref%
{bound-T-0-ni}) and (\ref{bound-T-ni-condition}) we can see that%
\begin{equation}
\min_{0\leq k\leq K}X_{n+k}(1,n)=c_{0}T_{0,n}+\phi _{K}(\mathbf{T}%
_{n})+\sum_{i=1}^{n}O((n-i)^{m(1)-2})\xi _{i}.  \label{eq-using-phi}
\end{equation}%
\fi

The following lemma is stronger than what we need in the current section,
and will be used later in establishing the power law.

\begin{lemma}
\label{lem-rotation} Assume $\exists \lambda \in \Lambda ^{\star }\setminus
\{1\}\mbox{ such that }m(\lambda )=m(1)$. $\,$Let $\{\tilde{T}_{0,\cdot }\}$
and $\{\tilde{\mathbf{T}}_{j,\cdot }\}$ (for $j=1,\ldots ,\ell $) be
independent while $\{\tilde{T}_{0,\cdot }\}$ has the same distribution as $%
\{T_{0,\cdot }\}$ and $\{\tilde{\mathbf{T}}_{j,\cdot }\}$ has the same
distribution as $\{\mathbf{T}_{j,\cdot }\}$. Then for any $\epsilon >0$ and $%
K\in \mathbb{N}$, there exists $C>0$ such that for any $N\in \mathbb{N}$,%
\begin{align*}
\mathbb{P}& (\Omega _{N}^{+})\leq \mathbb{P}(c_{0}(1+\epsilon )\tilde{T}%
_{0,n}+\phi _{K}(\tilde{\mathbf{T}}_{n})\geq -\epsilon n^{m(1)-1/2}\,,%
\mbox{
for }\log N\leq n\leq N-K)+\mathrm{e}^{-C(\log N)^{2}}\,, \\
\mathbb{P}& (\Omega _{N}^{+})\geq C\mathrm{e}^{-C(\log N)^{3/4}}(\mathbb{P}%
(c_{0}(1-\epsilon )\tilde{T}_{0,n}+\phi _{K}(\tilde{\mathbf{T}}_{n})\geq
\epsilon n^{m(1)-1/2},\mbox{ for }1\leq n\leq N)-\mathrm{e}^{-C(\log
N)^{4/3}})\,.
\end{align*}
\end{lemma}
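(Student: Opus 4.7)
My plan is to reduce the full persistence event $\Omega_N^+$ to a coarser event involving only the vectors $(T_{0,n}, \mathbf{T}_{1,n}, \ldots, \mathbf{T}_{\ell,n})$ at each time $n$, and then decouple the $\ell+1$ modes via Slepian-type covariance comparison. The key identity comes from iterating \eqref{eq-R_theta-general} over $k = 0, 1, \ldots, K$ and combining with \eqref{eq-X-n-expansion}: for each $n$,
\[
\min_{0 \leq k \leq K} X_{n+k} \;=\; c_0 T_{0,n} + \phi_K(\mathbf{T}_n) + E_n,
\]
where $E_n$ collects (i) the increments $T_{0,n+k}-T_{0,n}$ and the lower-multiplicity contributions $\mathbf{T}_{j,n+k,m_j-2}$ appearing in \eqref{eq-R_theta-general}, (ii) the remainder $R_n$ from \eqref{eq-X-n-expansion}, and (iii) the fresh-noise term $\sum_{i=n+1}^{n+k}h_{n+k-i}\xi_i$. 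Using \eqref{def-new-T} together with $b_{n,i,m_j-1}=O((n-i)^{m_j-1})$, one obtains $\mathrm{Var}(E_n) = O(n^{2m(1)-3})$, so a Gaussian union bound gives $\mathbb{P}(\max_{\log N \leq n \leq N} |E_n| \geq (\epsilon/2) n^{m(1)-1/2}) \leq \mathrm{e}^{-C(\log N)^2}$.

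\textbf{Upper and lower bounds.} Off the bad event for $E_n$, persistence implies $c_0 T_{0,n} + \phi_K(\mathbf{T}_n) \geq -(\epsilon/2) n^{m(1)-1/2}$ (respectively, is implied by the same inequality with $+\epsilon n^{m(1)-1/2}$ on the right). The decoupling rests on the estimate that, for $j_1 \neq j_2$ with $\theta_{j_1} \neq \pm \theta_{j_2}$, Abel summation in \eqref{def-new-T} yields $|\mathrm{Cov}(T_{j_1,n}, T_{j_2,m})| = O((n \wedge m)^{2m(1)-2})$, one order below the diagonal $\mathrm{Var}(T_{j,n}) \asymp n^{2m(1)-1}$. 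For the upper bound, multiplying $\tilde T_{0,\cdot}$ by $(1+\epsilon)$ makes the inflated independent covariance dominate $\mathrm{Cov}(X_n, X_m)$ for $n, m \geq \log N$, so Slepian's Lemma applied to the $(K+1)$-fold intersection of half-space events indexed by $0 \leq i \leq K$ in the definition of $\phi_K$ (together with the grid of times $n$) yields the stated upper bound. For the lower bound, I will first condition on $\{\xi_i\}_{i \leq n_0}$, with $n_0 \asymp (\log N)^{3/4}$, lying in a box chosen so that $X_1, \ldots, X_{n_0} \geq 0$ and $(T_{0,n_0}, \mathbf{T}_{j,n_0})$ are placed at benign values; this conditioning carries entropic cost $\mathrm{e}^{-C n_0}$, providing the prefactor $\mathrm{e}^{-C(\log N)^{3/4}}$. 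Then apply the parallel Slepian comparison with $(1-\epsilon)$-deflation of $\tilde T_{0,\cdot}$, at the cost of an error event whose conditional probability is $\leq \mathrm{e}^{-C(\log N)^{4/3}}$ (the refined tail arising from the sharper concentration of $E_n$ after the conditioning).

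\textbf{Main obstacle.} The chief subtlety is that $\phi_K$ is the minimum of $K+1$ linear functionals of $(\mathbf{T}_{j,n})_j$, so the reduced persistence event is a large intersection of half-space constraints. Slepian comparison applies to such intersections, but one must verify that the $(1\pm\epsilon)$-inflation/deflation of $\tilde T_{0,\cdot}$ combined with the $\epsilon n^{m(1)-1/2}$ slack uniformly dominates (respectively, is dominated by) the off-diagonal cross-covariances across every pair $(n, m)$ and every pair of rotated functionals appearing in $\phi_K$. The lower bound further requires choosing the conditioning window $n_0$ so that the conditional concentration bound $\mathrm{e}^{-C(\log N)^{4/3}}$ on $E_n$ balances against the entropy cost $\mathrm{e}^{-C n_0}$ of the initial box, which is what dictates the specific exponents $3/4$ and $4/3$.
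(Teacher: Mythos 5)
Your high-level plan mirrors the paper's: a Slepian comparison to decouple the zero-angle mode $T_{0,\cdot}$ from the oscillatory modes $\mathbf{T}_{j,\cdot}$, an approximation of the persistence event by the $\phi_K$-based event, and a conditioning argument that pays the entropic prefactor $\mathrm{e}^{-C(\log N)^{3/4}}$. However, there are three concrete gaps.

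First, you never address Slepian's equal-variance hypothesis. Inflating $\tilde T_{0,\cdot}$ by $(1+\epsilon)$ strictly increases the variance of the decoupled linear functionals, so they cannot be compared to $X_{n}$ (or to your $L_{n,i}\approx X_{n+i}$) without a renormalization. The paper works with $W_n=\bigl((1+\epsilon)c_0\tilde T_{0,n}+\sum_j c_j\tilde T_{j,n}\bigr)/(1+\epsilon_n)$, where $\epsilon_n<\kappa_\epsilon\epsilon$ is tuned so that $\mathrm{Var}(W_n)=\mathrm{Var}(X_n)$ exactly; this step is essential and is missing from your sketch. Related to this: the cross-covariance estimate you invoke (for $T_{j_1,n}$ vs.\ $T_{j_2,m}$) is needed, but the comparison also crucially requires the $T_{0,n}$–$T_{j,m}$ cross-covariances to be lower order, since \emph{all} of these are discarded when moving to the independent copies; you should state and use both.

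Second, the origin of the $\mathrm{e}^{-C(\log N)^{4/3}}$ term is misattributed. It does not come from ``sharper concentration of $E_n$ after the conditioning.'' In the paper it is the probability of the event $B^c=\bigl\{\exists n\in[n_\epsilon'-L+1,n_\epsilon']:|X_n|\geq(\log N)^{2/3}\bigr\}$ for the $O(1)$ indices $n$ near a fixed constant $n_\epsilon'$; since $\mathrm{Var}(X_n)=O(1)$ at those times, a union bound gives $\mathbb{P}(B^c)\leq\mathrm{e}^{-c(\log N)^{4/3}}$. The exponents $2/3$ and $4/3$ are linked by this Gaussian-tail calculation, not by $E_n$.

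Third, and most substantially, the lower-bound argument is much harder than ``condition on $\{\xi_i\}_{i\leq n_0}$ in a box with benign values and apply reverse Slepian.'' After the reverse-Slepian step the bound is $\mathbb{P}(X_n\geq 0,\ n_\epsilon'\leq n\leq N)\geq\mathbb{P}(U_n\geq 0,\ldots)$, but what one needs is a bound on $p_N=\mathbb{P}(X_n\geq 0,\ 1\leq n\leq N)$. The paper bridges this gap by comparing two \emph{conditional} drifts: under the constraint $B$ the conditional mean of $X_{n'}$ is at most $O((\log N)^{2/3})(n'-n_\epsilon')^{m(1)-1}$, whereas starting from the explicit initial box $\widehat E_{n^\star}$ with $n^\star=(\log N)^{3/4}$ the conditional mean is at least $C(\log N)^{17/24}(n'-n^\star)^{m(1)-1}$; since $17/24>2/3$, persistence from $\widehat E_{n^\star}$ is at least as likely as persistence conditioned on $B$, and $\mathbb{P}(\widehat E_{n^\star})\geq(C_\delta\delta)^{n^\star}=\mathrm{e}^{-C(\log N)^{3/4}}$. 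This monotone comparison of conditional means, together with the precise choice of the exponents $3/4$, $2/3$, $17/24$, is the real crux of the lower bound, and it is absent from your proposal.
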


\begin{proof}
\ We first prove the upper bound. Define $\Lambda _{0}:=\{\lambda \in
\Lambda ^{\star },m(\lambda )=m(1)\}$. With Lemma \ref%
{lem-general-regression-coefficients} and Lemma \ref{lemma-convolution} we
get%
\begin{equation}
\mathrm{\var}\left[ X_{n}\right] =\sum_{\ell =0}^{n}(\sum_{\lambda \in
\Lambda }\sum_{j=0}^{m(\lambda )-1}a_{\lambda ,j}|\lambda |^{\ell }\ell
^{j}\cos (\ell \theta _{\lambda }+\theta _{\lambda ,j}))^{2}=\sum_{\lambda
\in \Lambda _{0}}a_{\lambda ,m(1)-1}^{2}n^{2m(1)-1}+O(n^{2m(1)-2}),
\label{eq-var-Xn}
\end{equation}%
and similarly%
\begin{equation}
\mathrm{\var}[(1+\epsilon )c_{0}\tilde{T}_{0,n}+\sum_{j=1}^{\ell }c_{j}%
\tilde{T}_{j,n}]=((1+\epsilon )^{2}a_{1,m(1)-1}^{2}+\sum_{\lambda \in
\Lambda _{0}\backslash \{1\}}a_{\lambda
,m(1)-1}^{2})n^{2m(1)-1}+O(n^{2m(1)-2}).  \label{eq-Var-1+epi-Xn}
\end{equation}%
For any $\epsilon >0$, because%
\begin{equation*}
\frac{(1+\epsilon )^{2}a_{1,m(1)-1}^{2}+\sum_{\lambda \in \Lambda
_{0}\backslash \{1\}}a_{\lambda ,m(1)-1}^{2}}{\sum_{\lambda \in \Lambda
_{0}}a_{\lambda ,m(1)-1}^{2}}<(1+\epsilon )^{2},
\end{equation*}%
combining with (\ref{eq-var-Xn}) and (\ref{eq-Var-1+epi-Xn}) it follows that
there exists $n_{1}\in \mathbb{N}$ and $\kappa _{\epsilon }<1$ such that for
all $n\geq n_{1}$, we can find $\epsilon _{n}<\kappa _{\epsilon }\epsilon $
such that $W_{n}:=((1+\epsilon )c_{0}\tilde{T}_{0,n}+\sum_{j=1}^{\ell }c_{j}%
\tilde{T}_{j,n})/(1+\epsilon _{n})$ has the same variance as $X_{n}$.
Furthermore by Lemma \ref{lem-general-regression-coefficients} and Lemma \ref%
{lemma-convolution} it is not hard to verify that there exists $n_{\epsilon
}\geq n_{1}$ such that%
\begin{equation*}
\mathrm{\Cov}(X_{n},X_{n^{\prime }})\leq \mathrm{\Cov}(W_{n},W_{n^{\prime }})%
\mbox{ for all }n,n^{\prime }\geq n_{\epsilon }\,.
\end{equation*}%
Therefore, by Slepian's Lemma we obtain that%
\begin{equation}
\mathbb{P}(X_{n}\geq 0\mbox{ for }n_{\epsilon }\leq n\leq N)\leq \mathbb{P}%
(W_{n}\geq 0\mbox{ for }n_{\epsilon }\leq n\leq N)\,.  \label{eq-X-W-U-1}
\end{equation}%
Consider $N$ large enough such that $\log N\geq n_{\epsilon }$. Denote by%
\begin{align*}
E& :=\{c_{0}(1+\epsilon )\tilde{T}_{0,n}+\phi _{K}(\tilde{\mathbf{T}}%
_{n})\geq -\epsilon n^{m(1)-1/2}\,,\mbox{ for all }\log N\leq n\leq N-K\}\,,
\\
F& :=\bigcup\limits_{n=\log N}^{N}\{\mbox{$\sum_{j=1}^l$}\Vert \tilde{%
\mathbf{T}}_{j,n,m_{j}-2}\Vert _{2}\geq c\epsilon K^{-1}n^{m(1)-1/2}\}\,.
\end{align*}%
Note that under $F^{c}$, for any $\log N\leq n\leq N-K$ and $i\leq K$, by (%
\ref{eq-R_theta}) we have{\small 
\begin{equation}
|[1,0]\sum_{j=1}^{\ell }c_{j}(\mathsf{R}_{\theta _{j}})^{i}\tilde{\mathbf{T}}%
_{j,n}-\sum_{j=1}^{\ell }c_{j}\tilde{T}_{j,n+i}|\leq \max_{1\leq j\leq \ell
}\left\{ \left\vert c_{j}\right\vert \right\} \sum_{j=1}^{\ell
}\sum_{p=1}^{i}\Vert \tilde{\mathbf{T}}_{j,n+p,m_{j}-2}\Vert _{2}\leq
\epsilon \ell c\max_{1\leq j\leq \ell }\left\{ \left\vert c_{j}\right\vert
\right\} n^{m(1)-1/2}.  \label{bound-phi-T-til}
\end{equation}%
}Choose $c$ such that $c\ell \max_{1\leq j\leq \ell }\left\{ \left\vert
c_{j}\right\vert \right\} <1/2$. Combining with (\ref{bound-phi-T-til}), the
definition of $W_{n}$ and the definition of $\phi _{K}$, it follows that $%
E^{c}\cap F^{c}\subseteq \{W_{n}\geq 0\mbox{ for  }\log N\leq n\leq N\}^{c}$%
, which implies that $\{W_{n}\geq 0\mbox{ for  }\log N\leq n\leq
N\}\subseteq E\cup F$. A simple union bound gives that $\mathbb{P}(F)\leq 
\mathrm{e}^{-C(\log N)^{2}}$ for some $C>0$, yielding the desired upper
bound together with (\ref{eq-X-W-U-1}).

We next turn to the proof of the lower bound. With the similar method to the
proof of the upper bound, we can show that for any $\epsilon >0$, there
exist $n_{\epsilon }^{\prime }\in \mathbb{N}$ and $\kappa _{\epsilon
}^{\prime }<1$ such that for all $n\geq n_{\epsilon }^{\prime }$, we can
find $\epsilon _{n}<\kappa _{\epsilon }^{\prime }\epsilon $ such that $%
U_{n}=((1-\epsilon )c_{0}\tilde{T}_{0,n}+\sum_{j=1}^{\ell }c_{j}\tilde{T}%
_{j,n})/(1-\epsilon _{n}^{\prime })$ has the same variance as $X_{n}$, and%
\begin{equation*}
\mathrm{\Cov}(X_{n},X_{n^{\prime }})\geq \mathrm{\Cov}(U_{n},U_{n^{\prime }})%
\mbox{ for all }n,n^{\prime }\geq n_{\epsilon }^{\prime }.
\end{equation*}%
Therefore, by Slepian's Lemma we obtain that%
\begin{equation}
\mathbb{P}(X_{n}\geq 0\mbox{ for }n_{\epsilon }^{\prime }\leq n\leq N)\geq 
\mathbb{P}(U_{n}\geq 0\mbox{ for }n_{\epsilon }^{\prime }\leq n\leq N)\,.
\label{eq-X-W-U}
\end{equation}%
Denote by%
\begin{equation*}
\widehat{E}:=A\bigcap B\text{,}
\end{equation*}%
where $A:=\{X_{n}\geq 0\mbox{ for }n_{\epsilon }^{\prime }\leq n\leq N\}$, $%
B:=\{|X_{n}|\leq (\log N)^{2/3}\mbox{ for }n_{\epsilon }^{\prime }-L+1\leq
n\leq n_{\epsilon }^{\prime }\}$. By \eqref{eq-X-W-U} and a union bound on $%
\{|X_{n}|\geq (\log N)^{2/3}\}$, we get that%
\begin{equation}
\mathbb{P}(\widehat{E})\geq \mathbb{P}(U_{n}\geq 0\mbox{ for }n_{\epsilon
}^{\prime }\leq n\leq N)-\mathrm{e}^{-C_{2}(\log N)^{4/3}},
\label{eq-hat-E-bound}
\end{equation}%
for some $C_{2}>0$. For $i\in \left\{ 1,...,L\right\} $ denote $h_{n,i}$ as
the solution to (\ref{general-regression-relation}) with initial condition $%
h_{i,i}=1,h_{j,i}=0$ for $j\in \left\{ 1,...,L\right\} \backslash \{i\}$.
Then by Lemma \ref{lem-general-regression-coefficients} we have $%
h_{n,i}=O(n^{m(1)-1})$, and by the regressive relation we can see that
starting from $\{X_{n}:n_{\epsilon }^{\prime }-L+1\leq n\leq n_{\epsilon
}^{\prime }\}$, the coefficient of $X_{n_{\epsilon }^{\prime }-L+i}$ in $%
X_{n^{\prime }}$ is $h_{n-n_{\epsilon }^{\prime }+L,i}$. Thus under $B$, for
all $n^{\prime }>n_{\epsilon }^{\prime }$ we have%
\begin{equation}
{\mathbb{E}}(X_{n^{\prime }}\mid \{X_{n}:n_{\epsilon }^{\prime }-L\leq n\leq
n_{\epsilon }^{\prime }\})=\sum_{i=0}^{L}h_{n-n_{\epsilon }^{\prime
}+L,i}X_{n_{\epsilon }^{\prime }-L+i}\leq O(1)(\log N)^{2/3}(n^{\prime
}-n_{\epsilon }^{\prime })^{m(1)-1}\,.  \label{eq-hat-E}
\end{equation}%
Denote by $n^{\star }=(\log N)^{3/4}$. For some $M,\delta >0$ to be
determined later, define%
\begin{equation}
\widehat{E}_{p}=\bigcap\limits_{n=1}^{L}\{M\leq X_{n}\leq \left( 1+\delta
\right) M\}\bigcap \bigcap\limits_{n=L+1}^{p}\{1\leq \xi _{n}\leq 1+\delta
\}\,.  \label{def-E-hat-p}
\end{equation}%
From Lemma \ref{lem-general-regression-coefficients} and the fact that $1$
is a root of $Q$, for $n>L$ there exists $C_{1}>0$ such that $X_{n}(1,L)\geq
M-C_{1}\delta Mn^{m(1)-1}$. By Lemma \ref%
{lem-general-regression-coefficients} and Lemma \ref{lemma-convolution}, it
is easy to see that there exists $C_{2},C_{3}>0$ and $N_{1}\in \mathbb{Z}^{+}
$, such that $X_{n}(L+1,n)\geq C_{2}n^{m(1)}$ if $n\geq N_{1}$, and $%
X_{n}(L+1,n)\geq -C_{3}$ if $n\leq N_{1}$. Choosing $M=2C_{3}+1$, $\delta
=(2C_{1}N_{1}^{m(1)-1})^{-1}\wedge C_{2}(2C_{1}M)^{-1}$, one can then verify
that for any $p\in \mathbb{Z}^{+}$, $\widehat{E}_{p}\subset \Omega _{p}^{+}$%
. In the following we consider $\widehat{E}_{n^{\star }}$. Under $\widehat{E}%
_{n^{\star }}$, one can verify that there exists $L^{\prime }>L$ such that $%
X_{n}(1,L^{\prime })\geq 0$ for $n$ large enough, and there exists $C>0$
such that for all $n^{\prime }>n^{\star }$, $X_{n^{\prime }}(L^{\prime
}+1,n^{\star })\geq C(\log N)^{17/24}(n^{\prime }-n^{\star })^{m(1)-1}$ as $N
$ is large enough, where the exponent $17/24$ is an arbitrarily chosen
number between $2/3$ and $3/4$. Thus as $N$ is large enough%
\begin{equation}
{\mathbb{E}}(X_{n^{\prime }}\mid \{X_{n}:n^{\star }-L+1\leq n\leq n^{\star
}\})\geq C(\log N)^{17/24}(n^{\prime }-n^{\star })^{m(1)-1}\,.
\label{eq-expectation-X-n-prime}
\end{equation}%
Note that for all $n^{\prime }>n_{\epsilon }^{\prime }$, the distribution of 
$X_{n^{\prime }}-{\mathbb{E}}(X_{n^{\prime }}\mid \{X_{n}:n_{\epsilon
}^{\prime }-L+1\leq n\leq n_{\epsilon }^{\prime }\})$ is same as the
distribution of $X_{n^{\prime }-n_{\epsilon }^{\prime }+n^{\star }}-{\mathbb{%
E}}(X_{n^{\prime }-n_{\epsilon }^{\prime }+n^{\star }}\mid \{X_{n}:n^{\star
}-L+1\leq n\leq n^{\star }\}),$ combined with \eqref{eq-hat-E} and (\ref%
{eq-expectation-X-n-prime}) it yields that for $N$ large enough%
\begin{equation}
\mathbb{P}(X_{n}\geq 0\mbox{ for all }n^{\star }\leq n\leq N-n_{\epsilon
}^{\prime }+n^{\star }\mid \widehat{E}_{n^{\star }})\geq \mathbb{P}%
(X_{n}\geq 0\mbox{ for all }n_{\epsilon }^{\prime }\leq n\leq N\mid B).
\label{jun-add-0403-1}
\end{equation}%
Further we\ can choose $N$ large enough such that $n^{\star }\geq $ $%
n_{\epsilon }^{\prime }$, thus with (\ref{jun-add-0403-1}) the following
holds,%
\begin{equation}
\mathbb{P}(X_{n}\geq 0\mbox{ for all }n^{\star }\leq n\leq N\mid \widehat{E}%
_{n^{\star }})\geq \mathbb{P}(A\mid B)\geq \mathbb{P}(A,B)=\mathbb{P}(%
\widehat{E}).  \notag
\end{equation}%
Note that obviously $\mathbb{P}(\widehat{E}_{n^{\star }})\geq (C_{\delta
}\delta )^{n^{\star }}$ for some $C_{\delta }>0$. Recalling that $\Omega
_{n^{\star }}^{+}\subset \widehat{E}_{n^{\star }}$, we have%
\begin{equation*}
p_{N}\geq \mathbb{P}(\widehat{E}_{n^{\star }})\mathbb{P}(X_{n}\geq 0%
\mbox{
for all }n^{\star }\leq n\leq N\mid \widehat{E}_{n^{\star }})\geq \mathbb{P}(%
\widehat{E}_{n^{\star }})\mathbb{P}(\widehat{E})\geq (C_{\delta }\delta
)^{n^{\star }}\mathbb{P}(\widehat{E})\,.
\end{equation*}%
Combined with \eqref{eq-hat-E-bound} and the fact that $\sum_{j=1}^{\ell
}c_{j}\tilde{T}_{j,n}\geq \phi _{K}(\tilde{\mathbf{T}}_{n})$ by the
definition of $\phi _{K}$ at (\ref{eq-def-phi}), it completes the proof of
the lemma.
\end{proof}

Here we show the lower bound in case $r^{\star }=1,m(1)=m(\lambda )$\ for
some $\lambda \in \Lambda ^{\star }$.

\begin{lemma}
\label{lem-rotated-random-walk} For any constant $C>0$, there exist $%
c,C^{\prime }>0$ such that for all $j\leq \ell $%
\begin{equation}
\mathbb{P}(\Vert \mathbf{T}_{j,n}\Vert _{2}\leq Cn^{m(\lambda _{j})-\tfrac{1%
}{2}}\mbox{ for all }1\leq n\leq N)\geq cN^{-C^{\prime }}\mbox{ for all }%
N\in \mathbb{N}\,.
\end{equation}
\end{lemma}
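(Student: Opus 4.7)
\textbf{Proof plan for Lemma \ref{lem-rotated-random-walk}.} Fix $j \leq \ell$, write $m := m(\lambda_j)$, $\theta := \theta_j$, and set dyadic scales $n_k := 2^k$. The typical size of $\mathbf{T}_{j,n}$ is $\Theta(n^{m-1/2})$ (cf.\ the computation in \eqref{eq-var-Xn}), so the event in question asks the process to stay inside a tube of its natural order, for which one expects polynomial decay by analogy with Brownian motion in a time-dependent ball.

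My plan is a Markov-chain argument on the extended $2m$-dimensional state vector
$$
\mathbf{M}_n := \bigl(\mathbf{T}_{j,n,0},\mathbf{T}_{j,n,1},\ldots,\mathbf{T}_{j,n,m-1}\bigr),
$$
which evolves via \eqref{eq-R_theta-general}. Iterating this recurrence over $s$ steps, as was done to derive \eqref{eq-T-j-n+s-k}, yields
$$
\mathbf{M}_{n+s} \;=\; \mathsf{F}_{s} \,\mathbf{M}_{n} \;+\; \mathbf{W}_{n,s},
$$
where $\mathsf{F}_{s}$ is an explicit deterministic block matrix (with entries that are polynomials in $s$ times rotations $\mathsf{R}_\theta^p$) and $\mathbf{W}_{n,s}$ is a centered Gaussian independent of $\mathcal{F}_n = \sigma(\xi_1,\ldots,\xi_n)$. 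After rescaling each block by $n^{-(i+1/2)}$ (its typical magnitude), I will show that at the dyadic time $n_{k+1}$ the normalised noise $\widehat{\mathbf{W}}_{n_k,n_k}$ has covariance with eigenvalues uniformly bounded above and below in $k$, while the normalised deterministic part $\widehat{\mathsf{F}}_{n_k}$ is a bounded matrix.

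Granted this, define the events $A_k := \{\|\widehat{\mathbf{M}}_{n_k}\|_2 \leq \rho\}$ for a suitable $\rho$ (depending on $C$). The two-sided covariance bound lets me write the conditional Gaussian density of $\widehat{\mathbf{M}}_{n_{k+1}}$ given $\mathcal{F}_{n_k}$ explicitly, and shows that on $A_k$ this density is bounded below by $c > 0$ on the ball $\{\|\widehat{\mathbf{M}}_{n_{k+1}}\|_2 \leq \rho\}$ of uniformly positive volume. Consequently $\mathbb{P}(A_{k+1}\mid \mathcal{F}_{n_k})\mathbf{1}_{A_k} \geq c\mathbf{1}_{A_k}$, so iterating gives $\mathbb{P}(\bigcap_{k\leq \log_2 N} A_k) \geq c^{\log_2 N}= N^{-c'}$. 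A standard Gaussian maximal inequality applied to the block increments $\mathbf{W}_{n_k,s}$, $1\leq s\leq n_k$, together with a union bound over the $O(\log N)$ scales, shows that on $\bigcap A_k$ the full-trajectory event $\{\|\mathbf{T}_{j,n}\|_2 \leq C n^{m-1/2}\text{ for all }n\leq N\}$ holds with conditional probability at least $1/2$ (a logarithmic inflation of $\rho$ absorbs the union bound). Combining these two estimates completes the lower bound $\geq c N^{-C'}$.

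The main obstacle is the covariance non-degeneracy of $\widehat{\mathbf{W}}_{n_k,n_k}$. This is a Hankel/Toeplitz-type computation: $\mathrm{Cov}(\mathbf{W}_{n_k,n_k})$ is a sum over $p=1,\ldots,n_k$ of rank-one matrices of the form (polynomial in $n_k-p$)$\cdot \mathsf{R}_\theta^{p}\mathbf{e}\mathbf{e}^T\mathsf{R}_\theta^{-p}$. Although each summand is rank one, the fact that $\theta\neq 0\pmod{2\pi}$ ensures, via the same sort of rotation/equidistribution argument used in the proof of Lemma \ref{lem-eigenvalues} (specifically the step yielding \eqref{eigen_lower_1}), that the aggregated covariance is full rank with both eigenvalues $\asymp n_k^{2(i+1/2)}$ in the $i$-th block. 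After rescaling these become $\Theta(1)$, uniformly in $k$, which is precisely what the conditional density argument requires.
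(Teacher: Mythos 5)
Your plan is a genuinely different route from the paper's. The paper never grapples with the full $2m$-dimensional state: it exploits the cascading structure $\mathbf{T}_{j,n,k}=\sum_{i\le n}\mathsf{R}_{\theta_j}^{\,n-i}\mathbf{T}_{j,i,k-1}$ to get the monotone reduction
\[
\{\Vert \mathbf{T}_{j,n,0}\Vert_2\le C'\sqrt{n}\;\forall n\le N\}\;\subseteq\;\{\Vert\mathbf{T}_{j,n}\Vert_2\le Cn^{m-1/2}\;\forall n\le N\}
\]
(their \eqref{eq-T-j-n-T-j-n-1}), so only the bottom $2$D block needs to be controlled; it then splits that block into two scalar components by triangle inequality and combines the two scalar tube events with the Gaussian correlation inequality. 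Your plan keeps all $m$ blocks and argues non-degeneracy of the normalised transition noise, which replaces both the reduction and the correlation inequality, at the cost of a Hankel-type covariance computation.

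There is, however, a genuine gap in your handling of the trajectory between dyadic times. You claim a Gaussian maximal inequality applied to the increments $\mathbf{W}_{n_k,s}$, together with a union bound over the $O(\log N)$ scales, makes the conditional probability of staying in the tube at least $1/2$ given $\bigcap_k A_k$. This is false: the top block of the increment noise over $[n_k,n_{k+1}]$ has typical size $\asymp (n-n_k)^{m-1/2}$, which is of the \emph{same} order as the tube radius $Cn^{m-1/2}$ once $n$ is close to $n_{k+1}$. A maximal inequality therefore only gives a $\Theta(1)$ escape probability per scale, not an $o(1)$ one, and no logarithmic deflation of $\rho$ can repair this because the tube radius is fixed while the increment variance is not; when $C$ is small the per-scale escape probability is bounded away from $0$. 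The conclusion should instead be that conditioning also on ``no escape in $[n_k,n_{k+1}]$'' keeps each dyadic conditional probability bounded below by a constant $\delta\in(0,1)$ depending on $C$, so the between-dyadics constraint multiplies in another $N^{-c''}$ factor; that is still consistent with the lemma's claim, but you should fold it into the exponent rather than assert it costs nothing. Separately, your covariance non-degeneracy claim fails when $\theta_j=\pi$: there $\mathsf{R}_{\theta_j}=-\mathsf{I}$, the vectors $\mathsf{R}_{\theta_j}^{\,p}\mathbf{e}$ all lie on one line, and the $2m\times 2m$ noise covariance is rank $m$. This is not fatal (the process $\mathbf{T}_{j,\cdot}$ itself lives in the corresponding $m$-dimensional subspace, and the paper's scalar decomposition quietly handles the case since one of $S_{1,\cdot},S_{2,\cdot}$ is identically $0$), but your argument as stated would divide by a zero eigenvalue and needs a special case.
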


\begin{proof}
Applying \eqref{eq-R_theta} and the triangle inequality, we see that%
\begin{equation}
\{\Vert \mathbf{T}_{j,n}\Vert _{2}\leq Cn^{m(\lambda _{j})-\tfrac{1}{2}}%
\mbox{ for
all }1\leq n\leq N\}\supseteq \{\Vert \mathbf{T}_{j,n,0}\Vert _{2}\leq C%
\sqrt{n}\mbox{ for all }1\leq n\leq N\}\,.  \label{eq-T-j-n-T-j-n-1}
\end{equation}%
By (\ref{eq-R_theta}) we have that%
\begin{equation*}
\mathbf{T}_{j,n,0}=\sum_{i=1}^{n}(\mathsf{R}_{\theta _{j}})^{n-i}[\cos
\theta _{\lambda _{j},m_{j}},\theta _{\lambda _{j},m_{j}}]^{T}\xi _{i}\,,
\end{equation*}%
and therefore%
\begin{equation}
\Vert \mathbf{T}_{j,n,0}\Vert _{2}\leq \Vert \sum_{i=0}^{n}(\mathsf{R}%
_{\theta _{j}})^{-i}[1,0]^{T}\xi _{i}\Vert _{2}+\Vert \sum_{i=0}^{n}(\mathsf{%
R}_{\theta _{j}})^{-i}[0,1]^{T}\xi _{i}\Vert _{2}.  \label{ineq-T-j-n-1}
\end{equation}%
Write $S_{1,n}=[1,0]\sum_{i=0}^{n}(\mathsf{R}_{\theta
_{j}})^{-i}[1,0]^{T}\xi _{i}$ and $S_{2,n}=[0,1]\sum_{i=0}^{n}(\mathsf{R}%
_{\theta _{j}})^{-i}[1,0]^{T}\xi _{i}$. Define%
\begin{equation*}
\Gamma _{k}:=\{\left\vert S_{1,\gamma ^{k+1}}\right\vert <C\sqrt{\gamma
^{k+1}}/4\}\text{.}
\end{equation*}%
Noting that $\var(S_{1,n})=n+o(n)$, by the independence of $S_{1,\gamma
^{k+1}}-S_{1,\gamma ^{k}}$ and $S_{1,\gamma ^{k}}$, it is easy to see that
there exists a $\gamma >1$ just depending on $C$ and a constant $C_{0}>0$,
such that $\mathbb{P(}\Gamma _{k}\mid S_{1,\gamma ^{k}}=z)>C_{0}$ for each $k
$ and any $z$ with $|z|\leq C\sqrt{\gamma ^{k}}/4$. We can check that $%
\{|S_{1,n}|\leq C\sqrt{n}/4\mbox{ for all }1\leq n\leq N\}\subset \cap
_{k=1}^{[\log _{\gamma }N-1]}\Gamma _{k}$. Then by writing $\mathbb{P(}\cap
_{k=1}^{[\log _{\gamma }N-1]}\Gamma _{k})$ as the multiplication of a
sequence of conditional probability, with previous upper bound of the
conditional probability we see that there exist $c,C^{\prime }>0$ such that%
\begin{equation*}
\mathbb{P}(|S_{1,n}|\leq C\sqrt{n}/4\mbox{ for all }1\leq n\leq N)\geq
c^{1/4}N^{-C^{\prime }/4}.
\end{equation*}%
Same applies for $\mathbb{P}(|S_{2,n}|\leq C\sqrt{n}/4\mbox{ for all }1\leq
n\leq N)$. By Gaussian correlation inequality (see \cite{R14}, and weaker
versions in \cite{Li99, SSZ98} which would also work for our proof), we have%
\begin{equation}
\mu (A\bigcap B)\geq \mu (A)\mu (B)\mbox{ for all convex
symmetric sets }A\mbox{ and }B\,,  \label{ineq-gaussian-peri}
\end{equation}%
where $\mu $ is a Gaussian measure on Euclidean space. Applying (\ref%
{ineq-gaussian-peri}) with%
\begin{equation*}
A=\{|S_{1,n}|\leq C\sqrt{n}/4\mbox{ for all }1\leq n\leq N\}\text{, }%
B=\{|S_{1,n}|\leq C\sqrt{n}/4\mbox{ for all }1\leq n\leq N\},
\end{equation*}%
we have%
\begin{align}
\mathbb{P}& (\Vert \sum_{i=0}^{n}(\mathsf{R}_{\theta _{j}})^{-i}[1,0]^{T}\xi
_{i}\Vert _{2}\leq C\sqrt{n}/2\mbox{ for all }1\leq n\leq N)  \notag \\
\geq & \mathbb{P}(|S_{1,n}|\leq C\sqrt{n}/4\mbox{ for all }1\leq n\leq N)%
\mathbb{P}(|S_{2,n}|\leq C\sqrt{n}/4\mbox{ for all }1\leq n\leq N)\geq
c^{1/2}N^{-C^{\prime }/2}\,.  \notag
\end{align}%
Similar argument applies to show that $\mathbb{P(}\Vert \sum_{i=0}^{n}(%
\mathsf{R}_{\theta _{j}})^{-i}[0,1]^{T}\xi _{i}\Vert _{2}\leq C\sqrt{n}%
/2)\geq c^{1/2}N^{-C^{\prime }/2}$. Using (\ref{ineq-T-j-n-1}) and applying (%
\ref{ineq-gaussian-peri}) again gives a desired lower bound for the
probability of the event in the right hand side of (\ref{eq-T-j-n-T-j-n-1}).
Hence with (\ref{eq-T-j-n-T-j-n-1}), the desired estimates follows.
\end{proof}

Write $T_{0,n,0}=\sum_{i=1}^{n}\xi _{i}$. For a Brownian motion $B$ and a
constant $C>0$, we consider the following intervals $%
[2^{k},2^{k+1}],k=0,...,[\log N]$, which obviously cover $[1,N]$. Define the
following events for each $k$,%
\begin{equation*}
\Lambda _{k}:=\{B(t)-B(2^{k})>C\sqrt{t}-2C\sqrt{2^{k}}\text{ for any }t\in
\lbrack 2^{k},2^{k+1}]\text{ and }B(2^{k+1})-B(2^{k})>2C\sqrt{2^{k}}\}.
\end{equation*}%
By scaling, it is easy to see that there exists a $C_{0}>0$, such that for
any $k$ we have $\mathbb{P}(\Lambda _{k})>C_{0}$. Noting that by the
definition of $\Lambda _{k}$'s we can check that $\{B(t)>C\sqrt{t}$ for all $%
t\in \lbrack 0,N]\}\in \{B(1)>C\}\cap \cap _{k=1}^{[\log N]}\Lambda _{k}$.
With the independence of $\Lambda _{k}$'s and the fact that we can regard $%
T_{0,n,0}$ as a Brownian motion at integer times, we see that there exists $%
c>0$ such that%
\begin{equation}
\mathbb{P}(T_{0,n,0}\geq 2Cm(1)!\sqrt{n}\mbox{ for all }1\leq n\leq N)\geq
cN^{-1/c}.  \label{ineq-P-T-0-n-1}
\end{equation}%
Observing that%
\begin{equation*}
\{T_{0,n,0}\geq 2Cm(1)!\sqrt{n}\mbox{ for all }1\leq n\leq N\}\subset
\{T_{0,n}\geq Cn^{m(1)-1/2}\mbox{ for all }1\leq n\leq N\},
\end{equation*}%
combined with (\ref{ineq-P-T-0-n-1}) we get%
\begin{equation}
\mathbb{P}(T_{0,n}\geq Cn^{m(1)-1/2}\mbox{ for all }1\leq n\leq N)\geq
cN^{-1/c}\,.  \label{lower-bound-integrated-prob}
\end{equation}%
Noting that $\phi _{K}(\mathbf{t})\geq -\sum_{j=2}^{\ell -1}c_{j}\Vert 
\mathbf{t}_{j}\Vert $, with Lemmas~\ref{lem-rotation} and \ref%
{lem-rotated-random-walk}, we complete the proof on the lower bound.

\subsection{\textbf{Lower bound with dominant zero-angle component\label%
{subsection-e-lower-bound-1}}}

Here we complete the proof of lower bound of part (e) in Theorem \ref{thm-1}%
, for the case $r^{\star }=1$ and $m(1)>m(\lambda )$\ for all $\lambda \in
\Lambda ^{\star }\backslash \{1\}$.

\begin{lemma}
\label{lem-rotation-2} If $m(\lambda )<m(1)$ for all $\lambda \in \Lambda
^{\star }\setminus \{1\}$, then there exist constants $c,C>0$ such that for
any $N$,%
\begin{equation*}
p_{N}\geq cN^{-C}\,.
\end{equation*}
\end{lemma}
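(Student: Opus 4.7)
\bigskip

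\textbf{Proof sketch.}
My plan is to emulate the strategy of Subsection~\ref{subsection-e-lower-bound-2}: reduce via a Slepian-type comparison to a surrogate process with independent components, then lower bound each component polynomially using the dominant-\textsc{\lowercase{irw}} estimate \eqref{lower-bound-integrated-prob} together with Lemma~\ref{lem-rotated-random-walk}. Under the present hypothesis the zero-angle piece $c_{0}T_{0,n}$, of typical size $n^{m(1)-1/2}$, strictly dominates every other summand in the decomposition \eqref{eq-X-n-expansion}: each $T_{j,n}$ with $j\geq 1$ has $m_{j}\leq m(1)-1$ and so $\var(T_{j,n})=O(n^{2m(1)-3})$, and the same applies to $R_{n}$. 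This makes the competing-components balance of Lemma~\ref{lem-rotation} degenerate, but also replaces $\phi_{K}$ by an essentially trivial error term.

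First, introduce mutually independent copies $\tilde T_{0,\cdot}$ and $\tilde T_{j,\cdot}$ ($j\geq 1$) of $T_{0,\cdot}$ and $T_{j,\cdot}$. For a small $\epsilon>0$ and a sequence $\epsilon_{n}\to\epsilon$ chosen by variance matching, set
\[
U_{n}=\frac{1}{1-\epsilon_{n}}\Bigl((1-\epsilon)c_{0}\tilde T_{0,n}+\sum_{j=1}^{\ell}c_{j}\tilde T_{j,n}\Bigr).
\]
Using Lemma~\ref{lem-general-regression-coefficients} and Lemma~\ref{lemma-convolution} as in the proof of Lemma~\ref{lem-rotation}, the leading order variance of $X_{n}$ is $a_{1,m(1)-1}^{2}n^{2m(1)-1}$, matched by $(1-\epsilon)^{2}(1-\epsilon_{n})^{-2}a_{1,m(1)-1}^{2}n^{2m(1)-1}$; the lower-order contributions ($j\geq 1$ terms and $R_{n}$) give only an $O(n^{-2})$ perturbation, so variance matching succeeds asymptotically with $\epsilon_{n}<\epsilon$. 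A direct cross-covariance calculation then yields $\mathrm{Cov}(U_{n},U_{m})\leq \mathrm{Cov}(X_{n},X_{m})$ for $n,m\geq n_{\epsilon}$, so by Slepian's Lemma, $\mathbb{P}(X_{n}\geq 0\ \forall n_{\epsilon}\leq n\leq N)\geq \mathbb{P}(U_{n}\geq 0\ \forall n_{\epsilon}\leq n\leq N)$. The initial $n<n_{\epsilon}$ segment is absorbed, as in the proof of Lemma~\ref{lem-rotation}, by forcing the first $L$ values of the process and the next $n_{\epsilon}-L$ innovations into a positive box (event $\widehat{E}_{n_{\epsilon}}$), which costs only a constant factor since $n_{\epsilon}$ is independent of $N$.

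Next, fix constants $M,C_{1},\ldots,C_{\ell}>0$ with $(1-\epsilon)c_{0}M>2\sum_{j=1}^{\ell}|c_{j}|C_{j}$. By the independence of $\tilde T_{0,\cdot}$ and the $\tilde T_{j,\cdot}$'s,
\[
\mathbb{P}(U_{n}\geq 0\ \forall n)\geq \mathbb{P}\bigl(\tilde T_{0,n}\geq M n^{m(1)-1/2}\ \forall n\bigr)\cdot\prod_{j=1}^{\ell}\mathbb{P}\bigl(\|\tilde{\mathbf{T}}_{j,n}\|_{2}\leq C_{j} n^{m_{j}-1/2}\ \forall n\bigr),
\]
using that on this intersection $U_{n}\geq c'n^{m(1)-1/2}>0$ for all $n\geq n_{\epsilon}$. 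The first factor is $\geq cN^{-1/c}$ by the dyadic Brownian-block argument used to prove \eqref{lower-bound-integrated-prob}, and each of the remaining $\ell$ factors is $\geq cN^{-C'}$ by Lemma~\ref{lem-rotated-random-walk}. Multiplying these polynomial lower bounds, and combining with the constant-factor cost of $\widehat{E}_{n_{\epsilon}}$, yields $p_{N}\geq cN^{-C}$ as desired.

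The main obstacle is the Slepian comparison of the first step: the strictly-dominant regime removes the extra margin that Lemma~\ref{lem-rotation} exploits from the $\lambda\ne 1$ modes on $\Lambda^{\star}$ with $m(\lambda)=m(1)$, so the variance-matching becomes only an asymptotic $1+O(n^{-2})$ statement rather than an exact one. One must verify carefully that the lower-order cross-covariances of the $T_{j,n}$'s (which involve oscillatory sums $\sum_{i}b_{n,i,m(1)-1}b_{m,i,m_{j}-1}\cos(\cdot)$, all at strictly smaller order than the diagonal $c_{0}^{2}\mathrm{Cov}(T_{0,n},T_{0,m})$ thanks to $m_{j}<m(1)$) indeed yield $\mathrm{Cov}(U_{n},U_{m})\leq \mathrm{Cov}(X_{n},X_{m})$ for all $n,m$ beyond a fixed threshold $n_{\epsilon}$; once that is done, the decoupling and polynomial-factor bounds proceed by direct invocation of the cited results.
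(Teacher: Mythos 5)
Your plan is to adapt the Slepian/variance-matching strategy of Subsection~\ref{subsection-e-lower-bound-2} (Lemma~\ref{lem-rotation}) to the dominant zero-angle case, and you correctly identify the crux as the Slepian comparison $\Cov(U_n,U_m)\le\Cov(X_n,X_m)$ after matching variances. Unfortunately, this is precisely where the argument breaks, and I do not think it can be repaired within this framework. In Lemma~\ref{lem-rotation}, the hypothesis that some $\lambda\in\Lambda^\star\setminus\{1\}$ has $m(\lambda)=m(1)$ is what makes $\sum_{j\ge1}c_j^2\var(T_{j,n})$ of the \emph{same} leading order $n^{2m(1)-1}$ as $c_0^2\var(T_{0,n})$; that is how the proof obtains $\epsilon_n\le\kappa_\epsilon\epsilon$ for a fixed $\kappa_\epsilon<1$, giving a margin of order $\epsilon(1-\kappa_\epsilon)\Cov(T_{0,n},T_{0,m})\asymp n^{2m(1)-1}$ that dominates all the discarded cross-covariances. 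Under the present hypothesis every $m_j\le m(1)-1$, so $\var(T_{j,n})=O(n^{2m(1)-3})$ and the margin degenerates: variance-matching now yields only $\epsilon_n=\epsilon+O(n^{-1})$ with indefinite sign, not $\epsilon_n<\epsilon$. (It is $O(n^{-1})$ rather than the $O(n^{-2})$ you state, because $R_n$ contains the next zero-angle term $\sim(n-i)^{m(1)-2}$, whose \emph{non-oscillatory} cross-covariance with $T_{0,n}$ is of order $n^{2m(1)-2}$.) The cross-covariances that $\Cov(X_n,X_m)$ has and $\Cov(U_n,U_m)$ lacks are themselves of order $n^{2m(1)-2}$, the same order as the vanishing margin, so the required inequality is not forced by any soft argument; deferring it as ``one must verify carefully'' leaves the claim unproved, and I believe the inequality can in fact fail for some pairs $(n,m)$.

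The paper sidesteps Slepian entirely here, and this is the structural point your proposal misses. Because the hypothesis gives a \emph{two-order} variance gap — $\var(Y_n)=O(n^{2m(1)-3})$ versus $\var(S_n^{(m(1))})\asymp n^{2m(1)-1}$ — one can work with the true decomposition $X_n=c_0 S_n^{(m(1))}+Y_n$ (same $\xi_i$'s, no independent copies) and simply condition the \textsc{ir w} to stay above the intermediate curve $n^{m(1)-3/2}\log N$, which costs only a polynomial factor (events $E_0,\ldots,E_4$), while a plain union bound shows $\Pr(Y_n<-c\,n^{m(1)-3/2}\log N)$ is summably small (events $F_0,F_1$). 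No covariance comparison is needed, and the dyadic-scaling conditioning across scales $n'=(\log\log N)^2$, $n^\star=(\log N)^2$, $N$ replaces the independence you were trying to manufacture via $\tilde T_0,\tilde T_j$. Your final-step calculation — intersecting $\{\tilde T_{0,n}\ge Mn^{m(1)-1/2}\}$ with $\{\|\tilde{\mathbf T}_{j,n}\|\le C_j n^{m_j-1/2}\}$ and multiplying polynomial lower bounds — is sound \emph{given} the surrogate process, but in this regime it is actually wasteful: you do not need Lemma~\ref{lem-rotated-random-walk} at all, since the oscillatory pieces are an order smaller and can be dismissed by a union bound rather than by a delicate polynomial-cost confinement. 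Also note that you should handle $m(1)=1$ (where $\Lambda^\star=\{1\}$ and your decomposition into $T_0$ plus $T_j$'s collapses) as a separate, easier case, as the paper does.
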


\begin{proof}
First we assume $m(1)>1$. Denote by $m=m(1)$, $\gamma =m-3/2$. Let $%
S_{n}^{(m)}$ be an \textsc{\lowercase{IRW}} of order $m$, then by (\ref%
{lower-bound-integrated-prob}) there exist constants $c_{1},C_{1}>0$ such
that%
\begin{equation}
\mathbb{P}(S_{n}^{(m)}\geq 0\text{ for }n=1,...,N)\geq c_{1}N^{-C_{1}}.
\label{lower-bound-integrated-prob-2}
\end{equation}%
By Lemma \ref{lem-general-regression-coefficients} we can write%
\begin{equation*}
X_{n}=c_{0}S_{n}^{(m)}+Y_{n},\text{ where Var}(Y_{n})=O(n^{2\gamma })\text{.}
\end{equation*}%
Denote by 
\begin{equation*}
n^{\star }:=[\log N]^{2},n^{\prime }:=[\log \log N]^{2},
\end{equation*}%
\begin{equation*}
E_{1}:=\bigcap\limits_{j=1}^{m}\{S_{n^{\star }}^{(j)}\geq (\log N)^{2j-2}\}.
\end{equation*}%
Note that under $E_{1}$, there exists $C_{2}>0$ such that $%
S_{n}^{(m)}(0,n^{\star })\geq C_{2}n^{m-1}\geq C_{2}n^{\gamma }\log N$ for
all $n^{\star }\leq n\leq N$. Let $\widetilde{S}_{n}^{(m)}$ be an
auto-regressive process generated by $(z-1)^{m}$ and $\xi _{n}1_{n\geq
n^{\star }}$, then obviously $\widetilde{S}_{n^{\star }+k}^{(m)}$ has the
same distribution as $S_{k}^{(m)}$ for $k\in \mathbb{Z}$, and we have that
under $E_{1}$ 
\begin{equation}
S_{n^{\star }+k}^{(m)}=\widetilde{S}_{n^{\star }+k}^{(m)}+S_{n^{\star
}+k}^{(m)}(0,n^{\star })\geq \widetilde{S}_{n^{\star
}+k}^{(m)}+C_{2}(n^{\star }+k)^{\gamma }\log N.  \label{eq-S-n-star-k}
\end{equation}%
Let $E_{0}:=\{S_{n}^{(m)}\geq C_{2}n^{\gamma }\log N$ for all $n^{\star
}\leq n\leq N\},$ thus with (\ref{lower-bound-integrated-prob-2}) and (\ref%
{eq-S-n-star-k}) we get%
\begin{equation}
\mathbb{P}(E_{0}\mid E_{1})\geq \mathbb{P}(\widetilde{S}_{n}^{(m)}\geq 0%
\text{ for }n=1,...,N)\geq c_{1}N^{-C_{1}}.  \label{bound-P-E-0-E-1}
\end{equation}%
Similarly, let $E_{4}:=\cap _{j=1}^{m}\{S_{n^{\prime }}^{(j)}\geq C_{5}(\log
\log N)^{2j}\}$ with $C_{5}$ to be determined later. Under $E_{4}$ there
exists $C_{3}>0$ such that$\ S_{n}^{(j)}(0,n^{\prime })\geq
C_{3}C_{5}n^{(j-1)}(\log \log N)^{2}$ for all $n^{\prime }\leq n\leq N$ and $%
j\in \lbrack m]$. Let%
\begin{equation*}
E_{2}:=\{S_{n}^{(m)}\geq n^{m-1}(\log \log N)^{2}\text{ for all }n^{\prime
}\leq n\leq n^{\star }\text{, }S_{n^{\star }}^{(j)}\geq (\log N)^{2j-2}\text{
for all }j\in \lbrack m]\}.
\end{equation*}%
Choosing $C_{5}=1/C_{3}$, with the similar argument to (\ref{bound-P-E-0-E-1}%
) we see that%
\begin{equation}
\mathbb{P}(E_{2}\mid E_{4})\geq c_{3}(\log N)^{-C_{3}}\geq c_{3}N^{-C_{3}},
\label{bound-P-E-2-E-4}
\end{equation}%
for some $c_{3},C_{3}>0$. Let%
\begin{equation*}
E_{3}:=\{X_{n}\geq 0\text{ for all }1\leq n\leq n^{\prime }\text{, }%
S_{n^{\prime }}^{(j)}\geq C_{5}(\log \log N)^{2j}\text{ for all }j\in
\lbrack m]\}.
\end{equation*}%
Recalling the definition of $\widehat{E}_{p}$ (\ref{def-E-hat-p}), it is
easy to see that $\widehat{E}_{n^{\prime }}\subset E_{3}$ for $n^{\prime }$
large enough, thus%
\begin{equation}
\mathbb{P}(E_{3})\geq c_{4}e^{-C_{4}n^{\prime }}.  \label{bound-P-E-3}
\end{equation}%
Further define{\small 
\begin{eqnarray*}
F_{0} &:&=\{Y_{n}\geq -(C_{2}/c_{0})n^{\gamma }\log N\text{ for all }%
n^{\star }\leq n\leq N\},\text{ } \\
F_{1} &:&=\{Y_{n}\geq -(1/c_{0})n^{\gamma }(\log \log N)^{2}\text{ for all }%
n^{\prime }\leq n\leq n^{\star }\}.
\end{eqnarray*}%
}Then by simple union bound there exist $C_{6},C_{7}>0$ such that%
\begin{equation}
\mathbb{P}(F_{0}^{c})\leq e^{-C_{6}(\log N)^{2}}\text{, }\mathbb{P}%
(F_{1}^{c})\leq e^{-C_{7}(\log \log N)^{4}}.  \label{bound-P-F-1-2}
\end{equation}%
By definition it is straightforward to see that%
\begin{equation}
p_{N}\geq \mathbb{P}(E_{3},E_{2},E_{0},F_{1},F_{0})\geq (\mathbb{P}(E_{3})%
\mathbb{P}(E_{2}\mid E_{3})-\mathbb{P}(F_{1}^{c}))\mathbb{P}(E_{0}\mid
F_{1},E_{2},E_{3})-\mathbb{P}(F_{0}^{c}).  \label{eq-P-omega-E-F}
\end{equation}%
Note that $E_{1}$ is independent of $\sigma (\xi _{1},...,\xi _{n^{\prime }})
$ conditioned on $\{S_{n^{\star }}^{(j)},$ $j\in \lbrack m]\}$, and the
similar holds for $E_{2}$. Plugging (\ref{bound-P-E-0-E-1}), (\ref%
{bound-P-E-2-E-4}), (\ref{bound-P-E-3}) and (\ref{bound-P-F-1-2}) into (\ref%
{eq-P-omega-E-F}), we see that the lemma is true in this case.

Now if $m(1)=1$, then according to the condition we get $\Lambda ^{\star
}=\{1\}$, and thus by Lemma \ref{lem-general-regression-coefficients} we can
write%
\begin{equation*}
X_{n}=c_{0}S_{n}^{(1)}+Y_{n},\text{ where Var}(Y_{n})\leq C_{0}\text{ for
some }C_{0}<\infty \text{.}
\end{equation*}%
Consider 
\begin{eqnarray*}
E_{0} &:&=\{S_{n}^{(1)}\geq (\log N)/2\text{ for all }\log N\leq n\leq N\}%
\text{, }E_{1}:=\{S_{\log N}^{(1)}\geq (\log N)/2\}, \\
E_{2} &:&=\{X_{n}\geq 0\text{ for all }1\leq n\leq \log N\text{, }S_{\log
N}^{(1)}\geq (\log N)/2\}, \\
F_{0} &:&=\{Y_{n}\geq -(\log N)/(2c_{0})\text{ for all }\log N\leq n\leq N\}.
\end{eqnarray*}%
Then by the similar argument to above, there exist $%
c_{2},C_{2},c_{3},C_{3},C_{4}>0$ such that%
\begin{equation*}
\mathbb{P}(E_{0}\mid E_{1})\geq c_{2}N^{-C_{2}}\text{, }\mathbb{P}%
(E_{2})\geq c_{3}e^{-C_{3}\log N}\text{, }\mathbb{P}(F_{0}^{c})\leq
e^{-C_{4}(\log N)^{2}}.
\end{equation*}%
Thus the proof is completed by%
\begin{equation*}
p_{N}\geq \mathbb{P}(E_{0},E_{2},F_{0})\geq \mathbb{P}(E_{0}\mid E_{2})%
\mathbb{P}(E_{2})-\mathbb{P}(F_{0}^{c})\geq
c_{2}c_{3}N^{-C_{2}-C_{3}}-e^{-C_{4}(\log N)^{2}}.
\end{equation*}
\end{proof}

\section{Persistence power exponent for \textsc{\lowercase{AR}}$_{3}$:
Theorem~\protect\ref{thm-discontinuity}\label{section-power-law}}

First, in Subsection \ref{subsection-ar-bm} we reduce the persistence
probability for regressive processes considered in Theorem~\ref%
{thm-discontinuity} to the probability for a 3-dimensional Brownian motion
to stay in a generalized cone. Then, in Subsection \ref%
{subsection-con-discon}, we show the existence of the persistence power
exponents in the case $\Lambda =\{1,\mathrm{e}^{\sqrt{-1}\theta },\mathrm{e}%
^{-\sqrt{-1}\theta }\}$ for $\theta \in (0,\pi )$, and analyze the
continuity and discontinuity of the power exponents depending on $\theta $
is rational or not.

\subsection{AR processes and the Brownian motion in a cone\label%
{subsection-ar-bm}}

We start with the following lemma.

\begin{lemma}
\label{lem-random-nickdim} For an \textsc{\lowercase{IRW}} $(S_{n}^{(m)})$
of order $m$, and an arbitrary deterministic sequence $(f_{n})$, there exist 
$c_{\epsilon }\rightarrow _{\epsilon \rightarrow 0}0$ such that the
following holds for all $N\in \mathbb{N}$,%
\begin{equation*}
\mathbb{P}(S_{n}^{(m)}\geq f_{n}+\epsilon n^{m-1/2}\,,\mbox{ for all }1\leq
n\leq N)\geq (\mathbb{P}(S_{n}^{(m)}\geq f_{n}\,,\mbox{ for all }1\leq n\leq
N))^{1+c_{\epsilon }}N^{-c_{\epsilon }}\,.
\end{equation*}
\end{lemma}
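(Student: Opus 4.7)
The approach is a deterministic Cameron-Martin shift of the driving Gaussian sequence, combined with H\"older's inequality to convert the resulting Radon-Nikodym density into a polynomial-in-$N$ loss. Set $A_f := \{S_n^{(m)}\geq f_n\text{ for all }1\leq n\leq N\}$ and $A_g := \{S_n^{(m)}\geq f_n+\epsilon n^{m-1/2}\text{ for all }1\leq n\leq N\}$; the goal is $\P(A_g)\geq \P(A_f)^{1+c_\epsilon}N^{-c_\epsilon}$. The first task is to build a deterministic vector $\mu=(\mu_1,\dots,\mu_N)\in\R^N$ satisfying
\[
(i)\ \ S_n^{(m)}(\mu) := \sum_{i=1}^n b_{n,i,m-1}\mu_i \geq \epsilon n^{m-1/2}\ \ \forall\,1\leq n\leq N, \qquad (ii)\ \ \|\mu\|_2^2 \leq C_m\,\epsilon^2(1+\log N).
\]
The natural candidate is $\mu_i:=c_m\epsilon\,i^{-1/2}$ with $c_m=c_m(m)$. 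The asymptotic $b_{n,i,m-1}=\binom{n-i+m-1}{m-1}\sim(n-i)^{m-1}/(m-1)!$ and the Beta integral $\int_0^1(1-u)^{m-1}u^{-1/2}\,du=B(m,\tfrac12)$ yield $S_n^{(m)}(\mu)\sim c_m\epsilon\,B(m,\tfrac12)(m-1)!^{-1}\,n^{m-1/2}$ as $n\to\infty$, giving (i) by choosing $c_m$ large (the finitely many small $n$ handled by direct inspection); (ii) is immediate from $\sum_{i=1}^N i^{-1}\leq 1+\log N$.

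Given such a $\mu$, the pointwise identity $S_n^{(m)}(\xi)=S_n^{(m)}(\xi-\mu)+S_n^{(m)}(\mu)$ combined with (i) yields the set inclusion $\{S_n^{(m)}(\xi-\mu)\geq f_n,\forall n\}\subseteq A_g$. Applying the Cameron-Martin change of variables for the translation $\xi\mapsto\xi-\mu$ (where $\xi\sim N(0,I_N)$) gives
\[
\P(A_g)\;\geq\;\E\!\left[L\,\mathbf{1}_{A_f}\right],\qquad L:=\exp\!\bigl(-\langle\mu,\xi\rangle-\tfrac{1}{2}\|\mu\|_2^2\bigr).
\]
Now apply H\"older with conjugate exponents $p=1+\epsilon$ and $q=(1+\epsilon)/\epsilon$ to the trivial factorization $\mathbf{1}_{A_f}=\mathbf{1}_{A_f}\cdot L^{1/p}\cdot L^{-1/p}$, giving $\P(A_f)\leq\E[L\mathbf{1}_{A_f}]^{1/p}\E[L^{-q/p}]^{1/q}$. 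The Gaussian MGF identity $\E[L^{-r}]=\exp(\tfrac{r(1+r)}{2}\|\mu\|_2^2)$, applied with $r=q/p=1/\epsilon$ and combined with (ii), yields $\E[L^{-q/p}]^{-p/q}=\exp(-\tfrac{1+r}{2}\|\mu\|_2^2)\geq N^{-C_m'\epsilon}$. Rearranging gives $\P(A_g)\geq\P(A_f)^{1+\epsilon}\,N^{-C_m'\epsilon}$, i.e.\ the claim with $c_\epsilon:=(1\vee C_m')\,\epsilon\to 0$ as $\epsilon\to 0$.

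The only real subtlety is the balance in H\"older: keeping the $\P(A_f)^p$ factor close to $\P(A_f)$ forces $p-1$ small, which makes the conjugate parameter $r=1/(p-1)$ large and would blow up the Gaussian MGF factor $\exp(-\tfrac{1+r}{2}\|\mu\|_2^2)$; the $O(\epsilon^2\log N)$ bound (ii) on $\|\mu\|_2^2$ is precisely what turns this exponential cost into the acceptable polynomial-in-$N$ cost $N^{-O(\epsilon)}$. The boundary case $N=1$ poses no trouble, since then $\|\mu\|_2^2=O(\epsilon^2)$ and the constant factor can be absorbed into $c_\epsilon$ without altering $c_\epsilon\to 0$.
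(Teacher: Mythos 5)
Your proof is correct and is essentially the paper's own argument: a deterministic exponential tilting of the driving Gaussian noise by a drift $\mu_i \propto \epsilon\, i^{-1/2}$ (chosen so the induced shift of $S_n^{(m)}$ dominates $\epsilon n^{m-1/2}$ while $\|\mu\|_2^2 = O(\epsilon^2\log N)$), followed by H\"older's inequality applied to the Radon--Nikodym derivative to transfer between the tilted and original measures at a polynomial-in-$N$ cost. The paper parameterizes the H\"older split via $\delta=\sqrt\epsilon$ rather than your $p=1+\epsilon$, producing a bound of the form $N^{-O(\epsilon^{3/2})}$ instead of your $N^{-O(\epsilon)}$, but both give $c_\epsilon\to0$, and your Cameron--Martin phrasing is equivalent to the paper's change-of-measure phrasing.
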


\begin{proof}
Let $\Xi :=\{S_{n}^{(m)}\geq f_{n}\mbox{ for all }1\leq n\leq N\}$. Let $%
P_{0}$ be the original law of an i.i.d.\ sequence of $(\xi _{n})$, and let $%
P_{\epsilon }$ to be the law of independent sequence $(\xi _{n}^{\prime })$
such that $\xi _{n}^{\prime }\sim N(C\epsilon n^{-1/2},1)$ for each $n$.
Then it is easy to check that there exists $C>0$ depending only on $m$ such
that%
\begin{equation}
\mathbb{P}(S_{n}^{m}\geq f_{n}+\epsilon n^{m-1/2}\,,\mbox{ for all }1\leq
n\leq N)\geq \mathbb{P}_{\epsilon }(\Xi )\,.  \label{lower-bound-P-S-m-n}
\end{equation}%
Using the fact that $\frac{d\mathbb{P}_{\epsilon }}{d\mathbb{P}_{0}}%
((x_{n}))=\mathrm{e}^{\sum_{k}(C\epsilon x_{k}k^{-1/2}-C^{2}\epsilon
^{2}/2k)}$, for any $\delta >1$ we get 
\begin{equation}
{\mathbb{E}}_{\mathbb{P}_{0}}\big(\big(\frac{d\mathbb{P}_{\epsilon }}{d%
\mathbb{P}_{0}}\big)^{1-1/\delta }\big)\leq N^{C^{2}\epsilon ^{2}/\delta
^{2}}\,.  \label{bound-expectation-P-0-RN}
\end{equation}%
By H\"{o}lder inequality and Radon-Nikodym theorem, we obtain that for all $%
0<\delta <1$, 
\begin{equation}
\mathbb{P}_{0}(\Xi )={\mathbb{E}}_{\mathbb{P}_{\epsilon }}(\big(\frac{d%
\mathbb{P}_{0}}{d\mathbb{P}_{\epsilon }}\big)\mathbf{1}_{\Xi })\leq (\mathbb{%
P}_{\epsilon }(\Xi ))^{1-\delta }({\mathbb{E}}_{\mathbb{P}_{\epsilon }}\big(%
\big(\frac{d\mathbb{P}_{0}}{d\mathbb{P}_{\epsilon }}\big)^{1/\delta }\big)%
)^{\delta }=(\mathbb{P}_{\epsilon }(\Xi ))^{1-\delta }({\mathbb{E}}_{\mathbb{%
P}_{0}}\big(\big(\frac{d\mathbb{P}_{\epsilon }}{d\mathbb{P}_{0}}\big)%
^{1-1/\delta }\big))^{\delta }\,.  
\label{eq-Radon-Nikodim}
\end{equation}%
Setting $\delta =\sqrt{\epsilon }$ and plugging (\ref%
{bound-expectation-P-0-RN}) into \eqref{eq-Radon-Nikodim}, by (\ref%
{lower-bound-P-S-m-n}) we complete the proof of the lemma.
\end{proof}

In the rest of the paper, we assume that $\Lambda =\{1,\mathrm{e}^{\sqrt{-1}%
\theta },\mathrm{e}^{-\sqrt{-1}\theta }\}$ for $\theta \in (0,\pi )$, and
denote $\lambda =\mathrm{e}^{\sqrt{-1}\theta }$. Recall the definition of $%
\theta _{\lambda ,1}$ in Lemma \ref{lem-general-regression-coefficients}.

\begin{lemma}
\label{lem-couple-T-BM} Let $\mathbf{W}$ be a planar Brownian motion, and
let $\mathbf{T}_{n}:=\sum_{i=1}^{n}\mathsf{R}_{\theta _{\lambda ,1}}(\mathsf{%
R}_{\theta })^{-i}[1,0]^{T}\xi _{i}$. Then, for all $\epsilon >0$, there
exists a coupling of $(\mathbf{W},\mathbf{T})$ and $c>0$ depending only on $%
\epsilon $ such that for all $N$,%
\begin{equation*}
\mathbb{P}(\exists n\in \lbrack N]:\Vert \frac{1}{\sqrt{2}}\mathbf{W}_{n}-%
\mathbf{T}_{n}\Vert \geq (\log N)^{4}+\epsilon \sqrt{n})\leq \mathrm{e}%
^{-c(\log N)^{2}}\,.
\end{equation*}
\end{lemma}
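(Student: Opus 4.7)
\medskip
\noindent\textbf{Proof plan for Lemma \ref{lem-couple-T-BM}.}
The plan is to build $\mathbf{W}$ out of $\mathbf{T}$ (plus fresh independent randomness) by a block-coupling that matches the increments of $\mathbf{T}$ and $\mathbf{W}/\sqrt{2}$ over short windows. The starting point is the identity
\begin{equation*}
v_i v_i^T := \mathsf{R}_{\theta_{\lambda,1}} \mathsf{R}_\theta^{-i} \begin{pmatrix} 1 & 0 \\ 0 & 0\end{pmatrix} \mathsf{R}_\theta^{i} \mathsf{R}_{\theta_{\lambda,1}}^T
= \tfrac{1}{2} I_2 + \tfrac{1}{2}\, \mathsf{R}_{\theta_{\lambda,1}} \mathsf{R}_\theta^{-i}\,\mathrm{diag}(1,-1)\,\mathsf{R}_\theta^{i}\mathsf{R}_{\theta_{\lambda,1}}^T,
\end{equation*}
whose second term has entries that are trigonometric polynomials in $2i\theta$. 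Since $\theta \in (0,\pi)$ the sums $\sum_{i=a+1}^{b}\cos(2i\theta)$ and $\sum_{i=a+1}^{b}\sin(2i\theta)$ are bounded geometric series, so
$\sum_{i=a+1}^{b} v_i v_i^T = \tfrac{b-a}{2} I_2 + E_{a,b}$ with $\|E_{a,b}\|_{\mathrm{op}} \leq C_\theta$ for a constant $C_\theta<\infty$ independent of $a,b$.

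\medskip
\noindent
Next, fix $\epsilon>0$, set block length $L:=\lceil (\log N)/\epsilon \rceil$, and partition $[N]$ into blocks $B_k=\{(k-1)L+1,\dots,kL\}$, $k\le N/L$. Writing $\Delta\mathbf{T}_k := \sum_{i\in B_k} v_i\xi_i \sim N(0,\Sigma_k)$ with $\Sigma_k = \tfrac{L}{2}I_2 + E_k$, the matrix expansion $(I+\tfrac{2}{L}E_k)^{1/2}=I+\tfrac{1}{L}E_k+O(L^{-2})$ (valid once $L \ge 4 C_\theta$) yields
\begin{equation*}
\bigl\|\Sigma_k^{1/2} - \sqrt{L/2}\, I_2\bigr\|_{\mathrm{op}} \,\le\, C'_\theta / \sqrt{L}\,.
\end{equation*}
Now define $Z_k := \Sigma_k^{-1/2}\Delta\mathbf{T}_k$, which are i.i.d.\ $N(0,I_2)$ across $k$, and construct a planar Brownian motion $\mathbf{W}$ by declaring $\mathbf{W}(kL)-\mathbf{W}((k-1)L)=\sqrt{L}\,Z_k$ and interpolating on each interval $[(k-1)L,kL]$ by an independent Brownian bridge (built from fresh independent 2-d Brownian motions). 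This gives a genuine planar BM coupled to $\mathbf{T}$.

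\medskip
\noindent
At a block endpoint $n=kL$, telescoping gives
\begin{equation*}
\mathbf{T}_{kL} - \tfrac{1}{\sqrt{2}}\mathbf{W}(kL) \;=\; \sum_{j=1}^{k}\bigl(\Sigma_j^{1/2}-\sqrt{L/2}\,I_2\bigr)Z_j,
\end{equation*}
a sum of independent centred 2-d Gaussians whose covariances have operator norm $O(1/L)$. Hence the sum is $N(0,\mathsf{\Sigma})$ with $\|\mathsf{\Sigma}\|_{\mathrm{op}}\le C k/L \le C n/L^2$, so Gaussian tail bounds yield
$\mathbb{P}(\|\mathbf{T}_{kL}-\mathbf{W}(kL)/\sqrt{2}\|\ge \epsilon\sqrt{n}) \le 2\exp(-c\epsilon^{2} L^{2}) \le 2e^{-c'(\log N)^{2}}$ by our choice of $L$, and a union bound over $k\le N/L$ keeps the failure probability at $e^{-c''(\log N)^{2}}$. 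For an arbitrary $n\in B_k$ we add the within-block oscillations $\|\mathbf{T}_n-\mathbf{T}_{(k-1)L}\|$ and $\|\mathbf{W}(n)-\mathbf{W}((k-1)L)\|/\sqrt{2}$; each is a centred Gaussian of variance $O(L)$, so the maximum over all $n$ in every block is bounded, with probability $1-e^{-c(\log N)^{2}}$, by $C\sqrt{L}\log N\le C(\log N)^{3/2}/\sqrt{\epsilon}\le (\log N)^{4}$ for $N$ large. Combining the two contributions completes the proof.

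\medskip
\noindent\textbf{Main obstacle.} The delicate point is synchronising the coupling so that both (i) the accumulated endpoint discrepancy stays $\le \epsilon\sqrt n$ for \emph{all} $n$ simultaneously, and (ii) the within-block fluctuations stay below $(\log N)^{4}$: this is what forces the precise scaling $L\asymp (\log N)/\epsilon$. The block-covariance estimate $\Sigma_k=\tfrac{L}{2}I_2+O(1)$, crucially using $\theta\in(0,\pi)$ so that $e^{-2i\sqrt{-1}\theta}\neq 1$, is the structural fact that makes such a coupling possible—and would fail at $\theta=0$ or $\theta=\pi$, matching the exclusion of these angles from Theorem~\ref{thm-discontinuity}.
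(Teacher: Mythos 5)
Your proof is correct and follows essentially the same block-coupling strategy as the paper's: partition $[N]$ into blocks, show the block-increment covariance of $\mathbf{T}$ is $\tfrac{L}{2}I_2+O(1)$ (using $\theta\notin\{0,\pi\}$ so that the oscillatory sums are uniformly bounded, exactly the content of the paper's Lemma~\ref{lemma-convolution}), and then couple by matching block increments. The only real difference is how the coupling is realised once the covariance estimate is in hand: the paper splits $\mathsf{B}_k$ additively into a scalar multiple of $I_2$ plus a bounded positive-definite remainder and adds an auxiliary independent Gaussian walk $\mathbf{Y}$, whereas you whiten each block increment to produce i.i.d.\ standard Gaussians $Z_k$ and define $\mathbf{W}$ directly from them; your telescoping bound $\|\Sigma_j^{1/2}-\sqrt{L/2}\,I_2\|_{\mathrm{op}}=O(L^{-1/2})$ plays the role of the paper's $\|(1-C'/(\log N)^2)^{1/2}-1\|$ and $\|\mathbf{Y}_k\|$ controls, and you take $L\asymp(\log N)/\epsilon$ in place of the paper's $L=(\log N)^2$, but the resulting exponent $e^{-c(\log N)^2}$ is the same. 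Both routes work; yours is marginally more direct in that it constructs the coupling explicitly rather than passing through an equality in distribution.
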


\begin{proof}
Let $n_{k}:=[k\left( \log N\right) ^{2}]$ for $1\leq k\leq \lbrack N/\left(
\log N\right) ^{2}]$. Writing $\mathsf{I}$ as an identity matrix of size 2,
by Lemma~\ref{lemma-convolution} and the definition of $\mathbf{T}_{n}$ we
see that the covariance matrix $\mathsf{B}_{k}$ of $(\mathbf{T}_{n_{k+1}}-%
\mathbf{T}_{n_{k}})$ satisfies%
\begin{equation*}
\Vert \mathsf{B}_{k}-\tfrac{\left( \log N\right) ^{2}}{2}\mathsf{I}\Vert
_{\infty }\leq C,
\end{equation*}%
where $C$ is a constant depending only on $\theta $. Therefore, there exists 
$C^{\prime }>0$ such that%
\begin{equation*}
\mathsf{B}_{k}=\frac{\left( \log N\right) ^{2}-C^{\prime }}{2}\mathsf{I}+%
\mathsf{A}_{k}\,,
\end{equation*}%
where $\mathsf{A}_{k}$ is a positive definite matrix with $\Vert \mathsf{A}%
_{k}\Vert _{\infty }\leq C^{\prime }$. Let $\zeta _{k}$ be an independent
Gaussian vector in ${\mathbb{R}}^{2}$ with covariance matrix $\mathsf{A}_{k}$%
, and let $\mathbf{Y}_{n}=\sum_{k=1}^{n}\zeta _{k}$. Therefore, we see that%
\begin{equation}
\{\mathbf{T}_{n_{k}}:1\leq k\leq N/\left( \log N\right) ^{2}\}\overset{%
\mathrm{law}}{=}\{\frac{1}{\sqrt{2}}(1-\frac{C^{\prime }}{\left( \log
N\right) ^{2}})^{1/2}\mathbf{W}_{n_{k}}+\mathbf{Y}_{k}:1\leq k\leq N/\left(
\log N\right) ^{2}\}\,,  \label{eq-identity-in-law}
\end{equation}%
where $\mathbf{W}$ is independent of $\mathbf{Y}$. By a simple union bound,
for a constant $c^{\prime }>0$ we have that%
\begin{equation}
\mathbb{P}(\exists k\in \lbrack N/\left( \log N\right) ^{2}]:\Vert \mathbf{Y}%
_{k}\Vert \geq \epsilon \log N\sqrt{k}/4)\leq \mathrm{e}^{-c^{\prime }(\log
N)^{2}}\,,  \label{eq-Y-n}
\end{equation}%
and%
\begin{equation}
\mathbb{P}(\exists k\in \lbrack N/\left( \log N\right) ^{2}]:\Vert \mathbf{W}%
_{n_{k}}\Vert \geq \epsilon \left( \log N\right) ^{2}\sqrt{n_{k}}/\left(
4C^{\prime }\right) )\leq \mathrm{e}^{-c^{\prime }(\log N)^{2}}\,.
\label{bound-P-W-n}
\end{equation}%
From (\ref{eq-identity-in-law}) and the fact that $|(1-x)^{1/2}-1|<x$ for $%
x\in (0,1)$, we see that{\small 
\begin{eqnarray}
&&\mathbb{\{}\exists k\in \lbrack N/\left( \log N\right) ^{2}]:\Vert \frac{1%
}{\sqrt{2}}\mathbf{W}_{n_{k}}-\mathbf{T}_{n_{k}}\Vert \geq (\log
N)^{4}/2+\epsilon \sqrt{n_{k}}/2\}  \notag \\
&\subset &\mathbb{\{}\exists k\in \lbrack N/\left( \log N\right) ^{2}]:\Vert 
\mathbf{W}_{n_{k}}\Vert \geq \frac{1}{4C^{\prime }}\epsilon \left( \log
N\right) ^{2}\sqrt{n_{k}}\}\cup \{\exists k\in \lbrack N/\left( \log
N\right) ^{2}]:\Vert \mathbf{Y}_{k}\Vert \geq \frac{1}{4}\epsilon \log N%
\sqrt{k}\}.  \notag
\end{eqnarray}%
}Thus combined with (\ref{eq-Y-n}) and (\ref{bound-P-W-n}) we get%
\begin{equation}
\mathbb{P}(\exists k\in \lbrack N/\left( \log N\right) ^{2}]:\Vert \frac{1}{%
\sqrt{2}}\mathbf{W}_{n_{k}}-\mathbf{T}_{n_{k}}\Vert \geq (\log
N)^{4}/2+\epsilon \sqrt{n_{k}}/2)\leq 2\mathrm{e}^{-c^{\prime }(\log N)^{2}}.
\label{eq_3.1bound_add_2}
\end{equation}%
Similarly, by a union bound again it is easy to verify that for some
constant $c^{\prime \prime }>0$,%
\begin{equation}
\mathbb{P}(\exists k,p:\Vert \frac{1}{\sqrt{2}}\left( \mathbf{W}_{n_{k}+p}-%
\mathbf{W}_{n_{k}}\right) -\left( \mathbf{T}_{n_{k}+p}-\mathbf{T}%
_{n_{k}}\right) \Vert \geq (\log N)^{4}/2+\epsilon \sqrt{n_{k}}/2)\leq 
\mathrm{e}^{-c^{\prime \prime }(\log N)^{2}}\,.  \label{eq_3.1bound_add_3}
\end{equation}%
Combined with (\ref{eq_3.1bound_add_2}) and (\ref{eq_3.1bound_add_3}), it
completes the proof of the lemma.
\end{proof}

Recall the definition of $\phi _{K}(\cdot )$ in \eqref{eq-def-phi}, where in
what follows we specify $\ell =1$. We denote by $\phi (\mathbf{t}%
)=\lim_{K\rightarrow \infty }\phi _{K}(\mathbf{t})$ for all $\mathbf{t}\in {%
\mathbb{R}}^{2}$.

\begin{lemma}
\label{lem-reduce-to-BM} Let $\mathbf{W}$ be a planar Brownian motion
independent of a Brownian motion $B$. Then for $K\in \mathbb{N}$ large
enough independent of $N$, we have{\small 
\begin{equation*}
\limsup_{\epsilon \rightarrow 0}\lim_{N\rightarrow \infty }\frac{\log p_{N}}{%
\log N}\leq \liminf_{\epsilon \rightarrow 0}\lim_{N\rightarrow \infty }\frac{%
\mathbb{P}(c_{0}(1+\epsilon )B_{t}+\phi (\tfrac{1}{\sqrt{2}}\mathbf{W}%
_{t})\geq 0\,,\mbox{ for }(\log N)^{9}\leq t\leq N-K)}{\log N},
\end{equation*}%
\begin{equation*}
\liminf_{\epsilon \rightarrow 0}\lim_{N\rightarrow \infty }\frac{\log p_{N}}{%
\log N}\geq \limsup_{\epsilon \rightarrow 0}\lim_{N\rightarrow \infty }\frac{%
\mathbb{P}(c_{0}(1-\epsilon )B_{t}+\phi (\tfrac{1}{\sqrt{2}}\mathbf{W}%
_{t})\geq 0\,,\mbox{ for }1\leq t\leq N)}{\log N}.
\end{equation*}%
}
\end{lemma}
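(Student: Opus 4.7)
The plan is to combine Lemma~\ref{lem-rotation} with the Gaussian coupling of Lemma~\ref{lem-couple-T-BM} and a Brownian analogue of the Radon--Nikodym step in Lemma~\ref{lem-random-nickdim}. In the present setting $m(1)=1$, so $\tilde T_{0,n}=\sum_{i=1}^n\xi_i$ is a Gaussian random walk which I identify directly with $B_n$, and the threshold $\epsilon n^{m(1)-1/2}$ in Lemma~\ref{lem-rotation} becomes $\epsilon\sqrt n$. Applying Lemma~\ref{lem-couple-T-BM} couples $\tilde{\mathbf T}$ with $\tfrac{1}{\sqrt 2}\mathbf W$ so that $\|\tilde{\mathbf T}_n-\tfrac{1}{\sqrt 2}\mathbf W_n\|\leq(\log N)^4+\epsilon\sqrt n$ outside an event of probability $e^{-c(\log N)^2}$. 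A standard L\'evy modulus-of-continuity bound then lets me replace the integer times $n$ by continuous $t$ with an extra slack $C\sqrt{\log N}$, and the $|c_1|$-Lipschitz property of $\phi_K$ (rotations are Euclidean isometries) turns the slack on $\tilde{\mathbf T}$ into a slack on $\phi_K(\tfrac{1}{\sqrt 2}\mathbf W_t)$. In the range $t\geq (\log N)^9$ the $(\log N)^4$ contribution is dominated by $\sqrt t$, which motivates the cutoff in the upper bound; the short initial window needed for the lower bound is controlled by a good-initial-condition conditioning argument, whose subpolynomial cost is already absorbed in the $e^{-C(\log N)^{3/4}}$ prefactor supplied by Lemma~\ref{lem-rotation}.

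To exchange $\phi_K$ for $\phi$ I would use $\phi_K\geq\phi$ together with
\[
\delta_K:=\sup_{\|\mathbf s\|=1}\bigl(\phi_K(\mathbf s)-\phi(\mathbf s)\bigr)\xrightarrow[K\to\infty]{}0,
\]
where $\delta_K=0$ for $K\geq q-1$ when $\theta/2\pi=p/q\in\mathbb Q$ in lowest terms, and $\delta_K\to 0$ otherwise by equidistribution of $\{i\theta\bmod 2\pi\}$. Combining $|\phi_K(\mathbf t)-\phi(\mathbf t)|\leq\delta_K\|\mathbf t\|$ with the a priori bound $\|\mathbf W_t\|\leq M\sqrt t$ valid outside a Gaussian-small event replaces $\phi_K$ by $\phi$ in the upper-bound direction at the cost of a further $C\delta_K\sqrt t$ shift of the threshold, while the lower-bound direction is immediate from $\phi_K\geq\phi$. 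All remaining $O((\epsilon+\delta_K)\sqrt t)$ shifts of the threshold are then removed by the Brownian analogue of~\eqref{eq-Radon-Nikodim}: tilting $B$ by a drift $\sim(\epsilon+\delta_K)/\sqrt t$ produces a Radon--Nikodym derivative whose $L^{1-1/\delta}$ norm is bounded by $N^{O((\epsilon+\delta_K)^2/\delta^2)}$, and a H\"older inequality converts a $O((\epsilon+\delta_K)\sqrt t)$ shift into a multiplicative $N^{c_{\epsilon,K}}$ factor with $c_{\epsilon,K}\to 0$. Taking $\log$, dividing by $\log N$, and sending $N\to\infty$, $K\to\infty$, $\epsilon\to 0$ in the appropriate order yields the two stated limiting inequalities.

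The main obstacle is the upper-bound passage $\phi_K\to\phi$: the constant $K$ is required to be independent of $N$, so it cannot adapt to $N$-dependent error tolerances, yet in the irrational-angle case $\delta_K\to 0$ at a rate that is non-uniform in $\theta$. This asymmetry, together with the sharp dichotomy $\phi_K=\phi$ (rational $\theta/2\pi$) versus $\phi_K\searrow\phi$ (irrational $\theta/2\pi$), is the mechanism behind the discontinuity claim of Theorem~\ref{thm-discontinuity}, and it forces the present lemma to be carried out uniformly in $\theta$ by choosing $K$ \emph{after} $\epsilon$ and relying on the Girsanov absorption step to convert the residual $\delta_K$-shift into an $N^{o(1)}$ multiplicative factor that disappears on the $\log/\log N$ scale.
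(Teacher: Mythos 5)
Your overall strategy (couple $\mathbf T$ with $\tfrac 1{\sqrt 2}\mathbf W$ via Lemma~\ref{lem-couple-T-BM}, use the Lipschitz property of $\phi_K$, absorb $\epsilon\sqrt n$ threshold shifts by a Girsanov/H\"older step as in Lemma~\ref{lem-random-nickdim}, and push the $(\log N)^4$ coupling error into the $t\ge(\log N)^9$ cutoff) is essentially the paper's argument, and the lower-bound conditioning you sketch matches the paper's $A,B,D$ decomposition. The gap is in the passage from $\phi_K$ to $\phi$. You propose to bound $|\phi_K(\mathbf t)-\phi(\mathbf t)|\le\delta_K\|\mathbf t\|$ and then invoke an a priori bound $\|\mathbf W_t\|\le M\sqrt t$ ``outside a Gaussian-small event.'' This doesn't hold: for any \emph{fixed} $M$, the event $\{\|\mathbf W_t\|\le M\sqrt t\ \forall t\le N\}$ is itself a cone-type persistence event of probability $N^{-\gamma(M)}$, not an overwhelming one; and if instead you let $M\asymp\sqrt{\log N}$ so that the complementary event is polynomially small, the resulting drift $\delta_K\sqrt{\log N}/\sqrt t$ makes $\int_1^N\mu_t^2\,dt\asymp\delta_K^2(\log N)^2$, and the H\"older/Girsanov cost becomes $N^{\Theta(\delta_K^2\log N)}$, which is not $N^{o(1)}$ for fixed $K$. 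So with the order of quantifiers the lemma requires ($K$ fixed before $N\to\infty$), the residual $\delta_K$-shift is not absorbable by the tilting step.

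The paper avoids this entirely by exploiting the $1$-homogeneity of $\phi_K$ and $\phi$: for any $\epsilon_1>0$ one can choose $K$ with $\phi_K=\phi$ when $\theta/2\pi\in\mathbb Q$, and $\phi(\mathbf t)\le\phi_K(\mathbf t)\le(1-\epsilon_1)\phi(\mathbf t)$ when $\theta/2\pi\notin\mathbb Q$ (recall $\phi\le 0$). The error is then a \emph{multiplicative} rescaling of the cone, equivalently a rescaling of the coefficient $c_0(1\pm\epsilon)$ by $(1-\epsilon_1)^{\mp 1}$, which needs no control whatsoever on $\|\mathbf W_t\|$ and is sent to zero together with $\epsilon$. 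Your additive sup-norm $\delta_K$ actually implies the same multiplicative bound once one notes $\phi(\mathbf t)\le-c\|\mathbf t\|$ for a $\theta$-dependent $c>0$ (in the irrational case $\phi(\mathbf t)=-|c_1|\|\mathbf t\|$ exactly), so the repair is to reformulate your $\delta_K$ comparison multiplicatively and drop the a priori bound on $\|\mathbf W_t\|$ and the extra Girsanov step for this particular error term; the Girsanov step should be reserved, as in the paper, for the $\epsilon\sqrt n$ shifts coming from the coupling and from Lemma~\ref{lem-rotation}.
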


\begin{proof}
Obviously it is enough to show that there exist $c,c_{\epsilon }>0$ with $%
c_{\epsilon }\rightarrow _{\epsilon \rightarrow 0}0$, such that for any $%
N\in \mathbb{N}$,{\small 
\begin{align}
p_{N}\leq & N^{c_{\epsilon }}(\mathbb{P}(c_{0}(1+\epsilon )B_{t}+\phi (%
\tfrac{1}{\sqrt{2}}\mathbf{W}_{t})\geq 0\,,\mbox{ for }(\log N)^{9}\leq
t\leq N-K)^{1-c_{\epsilon }}+\mathrm{e}^{-c(\log N)^{2}})\,,
\label{eq-BM-upper} \\
p_{N}\geq & C\mathrm{e}^{-C(\log N)^{3/4}}N^{-c_{\epsilon }}\mathbb{P}%
(c_{0}(1-\epsilon )B_{t}+\phi _{K}(\mathbf{W}_{t})\geq 0,\mbox{ for }1\leq
t\leq N-(\log N)^{9})^{1+c_{\epsilon }}+\mathrm{e}^{-C(\log N)^{4/3}}.
\label{eq-BM-lower}
\end{align}%
}Let $\mathbf{T}_{n}$ be defined as in Lemma~\ref{lem-couple-T-BM} and let $%
\widetilde{\mathbf{T}}_{1,n}$ be defined as in Lemma~\ref{lem-rotation}. By
the definition we can construct a coupling such that $\widetilde{\mathbf{T}}%
_{1,n}=(\mathsf{R}_{\theta })^{n}\mathbf{T}_{n}$ and thus $\phi (\widetilde{%
\mathbf{T}}_{1,n})=\phi (\mathbf{T}_{n})$. By the definition of $\phi (%
\mathbf{t})$, we see that for any $\epsilon _{1}>0$, we can choose $K$ large
enough such that $\phi _{K}(\mathbf{t})=\phi (\mathbf{t})$ if $\theta \in 
\mathbb{Q}$, and $\phi _{K}(\mathbf{t})\leq (1-\epsilon _{1})\phi (\mathbf{t}%
)$ if $\theta \notin \mathbb{Q}$. Furthermore, by the definition of $\phi
_{K}$, for any $K$ we have that $\left\vert \phi _{K}(\mathbf{t}_{1})-\phi
_{K}(\mathbf{t}_{2})\right\vert \leq C_{1}\left\vert |\mathbf{t}_{1}-\mathbf{%
t}_{2}|\right\vert $ for some $C_{1}<\infty $ only depending on $Q$, which
further implies that $\left\vert \phi (\mathbf{t}_{1})-\phi (\mathbf{t}%
_{2})\right\vert \leq C_{1}\left\vert |\mathbf{t}_{1}-\mathbf{t}%
_{2}|\right\vert $. Therefore, if $\Vert \mathbf{W}_{n}/\sqrt{2}-\mathbf{T}%
_{n}\Vert \leq (\log N)^{4}+\epsilon _{2}\sqrt{n}$, then%
\begin{equation}
\phi (\frac{1}{\sqrt{2}}\mathbf{W}_{n})\leq C_{1}((\log N)^{4}+\epsilon _{2}%
\sqrt{n})+\phi (\mathbf{T}_{n})\leq C_{1}((\log N)^{4}+\epsilon _{2}\sqrt{n}%
)+\phi _{K}(\mathbf{T}_{n}),  \label{upper-bound-phi-Wn}
\end{equation}%
\begin{equation}
(1-\epsilon _{1})\phi (\frac{1}{\sqrt{2}}\mathbf{W}_{n})\geq
-C_{1}(1-\epsilon _{1})((\log N)^{4}+\epsilon _{2}\sqrt{n})+\phi _{K}(%
\mathbf{T}_{n}).  \label{lower-bound-phi-Wn}
\end{equation}%
Note that there is a natural coupling of $B_{n}$ and $T_{0,n}$ defined in
Lemma~\ref{lem-rotation}. Therefore for any $\epsilon >0$, letting $\epsilon
_{1}=\epsilon $ and $\epsilon _{2}=\epsilon /(3C_{1})$, by (\ref%
{lower-bound-phi-Wn}) we have that when $N$ is large enough, for any $n\geq
(\log N)^{9}$%
\begin{equation*}
\phi _{K}(\mathbf{T}_{n})\leq (1-\epsilon )\phi (\frac{1}{\sqrt{2}}\mathbf{W}%
_{n})+\frac{\epsilon }{2}\sqrt{n}.
\end{equation*}%
By Lemma~\ref{lem-rotation} and Lemma \ref{lem-couple-T-BM} we see that
there exists $c>0$ such that{\small 
\begin{equation*}
p_{N}\leq \,\mathbb{P}(c_{0}\frac{1+\epsilon }{1-\epsilon }B_{n}+\phi (\frac{%
1}{\sqrt{2}}\mathbf{W}_{n})\geq -\frac{2\epsilon }{1-\epsilon }\sqrt{n},%
\mbox{ for }(\log N)^{9}\leq n\leq N-K)+\mathrm{e}^{-c(\log N)^{2}}.
\end{equation*}%
}Let $\zeta _{n}=\max_{s\in \lbrack n,n+1]}(|B_{s}-B_{n}|+\Vert \mathbf{W}%
_{s}-\mathbf{W}_{n}\Vert )$. Clearly, for $N$ large enough a union bound
gives%
\begin{equation*}
\mathbb{P}(\exists n\in \lbrack N]:\zeta _{n}\geq (\log N)^{2})\leq \mathrm{e%
}^{-(\log N)^{2}}\,.
\end{equation*}%
Combining the last two inequalities and with Lemma, \ref{lem-random-nickdim} %
\eqref{eq-BM-upper} is implied directly.

\bigskip Denote by $n^{\star }:=(\log N)^{9}$. Define 
\begin{equation*}
A:=\{c_{0}(1-\epsilon )B_{n}+\phi (\tilde{\mathbf{T}}_{n})\geq (\epsilon
(1+C_{1})+C_{1})\sqrt{n},\mbox{ for }1\leq n\leq n^{\star }\},
\end{equation*}%
\begin{equation*}
B:=\{c_{0}(1-\epsilon )B_{n}+\phi (\tfrac{1}{\sqrt{2}}\mathbf{W}_{n})\geq
\epsilon (1+C_{1})\sqrt{n}+C_{1}(\log N)^{4},\mbox{ for }n^{\star }\leq
n\leq N-K\},
\end{equation*}%
\begin{equation*}
D:=\{\exists n\in \lbrack N]:\Vert \frac{1}{\sqrt{2}}\mathbf{W}_{n}-\mathbf{T%
}_{n}\Vert \geq (\log N)^{4}+\epsilon \sqrt{n}\}.
\end{equation*}%
Recall that for any $\epsilon >0$, when $K$ is large enough $\phi _{K}(%
\mathbf{t})\leq (1-\epsilon )\phi (\mathbf{t})$ for any $\mathbf{t}$.
Similar to the above analysis, with Lemmas~\ref{lem-rotation}, \ref%
{lem-couple-T-BM} and (\ref{lower-bound-phi-Wn}) we obtain that there exists 
$C>0$ such that{\small 
\begin{equation}
\mathbb{P}(\Omega ^{+})\geq C\mathrm{e}^{-C(\log N)^{3/4}}\mathbb{P}%
(A,B,D^{c})-\mathrm{e}^{-C(\log N)^{4/3}}\,\geq C\mathrm{e}^{-C(\log
N)^{3/4}}\mathbb{P}(B\mid A,D^{c})\mathbb{P}(A,D^{c})-\mathrm{e}^{-C(\log
N)^{4/3}}.  \label{ineq-lower-bound-P-omega-plus-ABD}
\end{equation}%
}Denote by $\widetilde{B}_{k}:=B_{n^{\star }+k}-B_{n^{\star }},\widetilde{%
\mathbf{W}}_{k}:=\mathbf{W}_{n^{\star }+k}-\mathbf{W}_{n^{\star }},$ then
obviously $\widetilde{B}_{n},\widetilde{\mathbf{W}}_{n}$ are still Brownian
motions. Note that by the definition of $\phi $ we have%
\begin{equation}
\phi (\tfrac{1}{\sqrt{2}}\mathbf{W}_{n^{\star }+k})\geq \phi (\tfrac{1}{%
\sqrt{2}}\widetilde{\mathbf{W}}_{k})+\phi (\tfrac{1}{\sqrt{2}}\mathbf{W}%
_{n^{\star }}).  \label{inequality-tirangle-phi}
\end{equation}%
Under $\{A,D^{c}\}$ it is easy to see that%
\begin{equation*}
c_{0}(1-\epsilon )B_{n^{\star }}+\phi (\tfrac{1}{\sqrt{2}}\mathbf{W}%
_{n^{\star }})\geq c_{0}(1-\epsilon )B_{n^{\star }}+\phi (\tilde{\mathbf{T}}%
_{n^{\star }})-\Vert \frac{1}{\sqrt{2}}\mathbf{W}_{n^{\star }}-\mathbf{T}%
_{n^{\star }}\Vert \geq 2\epsilon \sqrt{n^{\star }},
\end{equation*}%
combined with (\ref{inequality-tirangle-phi}) and the fact that $2\sqrt{n}%
\geq \sqrt{n+n^{\star }}-2\sqrt{n^{\star }}$, we get%
\begin{eqnarray}
\mathbb{P}(B\mid A,D^{c}) &\geq &\mathbb{P}(c_{0}(1-\epsilon )\widetilde{B}%
_{n}+\phi _{K}(\widetilde{\mathbf{W}}_{n})\geq \frac{\epsilon }{2}(\sqrt{%
n+n^{\star }}-2\sqrt{n^{\star }}),\mbox{ for }1\leq n\leq N-n^{\star }-K) 
\notag \\
&\geq &\mathbb{P}(c_{0}(1-\epsilon )\widetilde{B}_{n}+\phi _{K}(\widetilde{%
\mathbf{W}}_{n})\geq \epsilon \sqrt{n},\mbox{ for }1\leq n\leq N-n^{\star
}-K).  \label{ineq-lower-bound-P-BADC}
\end{eqnarray}%
Note that $\phi (\mathbf{t})\geq -c\left\Vert \mathbf{t}\right\Vert $ for
some $c>0$. Thus by Lemma \ref{lem-rotated-random-walk} and (\ref%
{lower-bound-integrated-prob}), we see that $\mathbb{P(}A)\geq
c_{1}(n^{\star })^{-C_{1}}$ for some $c_{1},C_{1}>0$. Noting that $\mathbb{P}%
(A,D^{c})\geq \mathbb{P(}A)-\mathbb{P(}D)$, combined with (\ref%
{ineq-lower-bound-P-omega-plus-ABD}), (\ref{ineq-lower-bound-P-BADC}), Lemma %
\ref{lem-random-nickdim} and Lemma \ref{lem-couple-T-BM}, the proof is
completed.
\end{proof}

\subsection{Continuities and discontinuities of the power with no multiple
zeroes\label{subsection-con-discon}}

In this subsection, we will use estimates on the probability for a Brownian
motion to stay in a generalized cone to deduce persistence probability of
our regressive process. Let $\mathcal{M}$ be a domain in the unit sphere $%
\mathbb{S}^{2}$ of ${\mathbb{R}}^{3}$. Denote by $\lambda (\mathcal{M})$ the
principle eigenvalue for Laplace-Beltrami operator on $\mathcal{M}$ with
Dirichlet boundary condition. In addition, define the generalized cone $%
\mathcal{C}(\mathcal{M})$ be the set of all rays emanating from the origin 0
and passing through $\mathcal{M}$. It was proved in \cite{BS97} (see also 
\cite{Burkholder77, DeBlassie87, DeBlassie88, BB96}) that for a
3-dimensional Brownian motion $\mathbf{B}$ and any compact set $\widetilde{%
\mathcal{M}}$ in the interior of $\mathcal{M}$%
\begin{equation}
\mathbb{P}_{\mathbf{x}}(\mathbf{B}_{s}\in \mathcal{C}(\mathcal{M})%
\mbox{ for
all }0\leq s\leq t)\asymp \Vert \mathbf{x}\Vert ^{\sqrt{\lambda (\mathcal{M}%
)+1/4}/2}t^{-\sqrt{\lambda (\mathcal{M})+1/4}/2}\mbox{ for all }\mathbf{x}%
\in \mathcal{C}(\widetilde{\mathcal{M}}),  \label{eq-BS}
\end{equation}%
where $\asymp $ means the \abbr{LHS} and the \abbr{RHS} are up to a constant. Furthermore,
we have%
\begin{equation}
\mathbb{P}_{\mathbf{x}}(\mathbf{B}_{s}\in \mathcal{C}(\mathcal{M})%
\mbox{ for
all }0\leq s\leq t)\leq C_{\mathcal{M}}\Vert \mathbf{x}\Vert ^{\sqrt{\lambda
(\mathcal{M})+1/4}/2}t^{-\sqrt{\lambda (\mathcal{M})+1/4}/2}\mbox{ for
all }\mathbf{x}\in \mathcal{C}(\mathcal{M}),  \label{eq-BS-2}
\end{equation}%
where $C_{\mathcal{M}}$ is a constant depending only on $\mathcal{M}$. In
view of the preceding estimates and Lemma~\ref{lem-reduce-to-BM}, we define%
\begin{equation*}
\mathcal{M}_{\Lambda }=\{\mathbf{x}\in \mathbb{S}^{2}:c_{0}x_{1}+\phi (%
\tfrac{1}{\sqrt{2}}[x_{2},x_{3}]^{T})\geq 0\}\text{, }\mathcal{M}_{\Lambda
,\epsilon }=\{\mathbf{x}\in \mathbb{S}^{2}:c_{0}(1+\epsilon )x_{1}+\phi (%
\tfrac{1}{\sqrt{2}}[x_{2},x_{3}]^{T})\geq 0\}\,.
\end{equation*}%
It is clear that $\mathcal{M}_{\Lambda ,\epsilon }$ converges to $\mathcal{M}%
_{\Lambda }$, so we have (see, e.g., \cite{BNV94})%
\begin{equation}
\lambda (\mathcal{M}_{\Lambda ,\epsilon })\rightarrow _{\epsilon \rightarrow
0}\lambda (\mathcal{M}_{\Lambda })\,.  \label{eq-eigenvalue-converge}
\end{equation}%
Applying Lemma~\ref{lem-reduce-to-BM}, \eqref{eq-BS} and \eqref{eq-BS-2},
sending $\epsilon \rightarrow 0$ and with \eqref{eq-eigenvalue-converge} we
obtain that%
\begin{equation}
p_{N}=N^{-\sqrt{\lambda (\mathcal{M}_{\Lambda })+1/4}/2+o(1)}\,,%
\mbox{ where
}o(1)\rightarrow _{N\rightarrow \infty }0\,.  \label{eq-persistence-poly}
\end{equation}%
This establishes the existence of the power decay for the persistence
probability. In what follows, we address the continuity and discontinuity
issues for the power. First consider $\theta /2\pi \not\in \mathbb{Q}$, then
for any sequence $Q_{\ell }\rightarrow Q$ denote by $\Lambda _{\ell }$ the
zero set of $Q_{\ell }$ and by $\theta _{\ell }$ the angle of the complex
zero in $\Lambda _{\ell }$. We have%
\begin{equation*}
\phi ^{(\ell )}(\mathbf{t})\rightarrow \phi (\mathbf{t})=-\Vert t\Vert
_{2}\,,
\end{equation*}%
where $\phi ^{(\ell )}(\mathbf{t})=\lim_{K\rightarrow \infty }\phi
_{K}^{(\ell )}(\mathbf{t})$ and $\phi _{K}^{(\ell )}$ is defined as in %
\eqref{eq-def-phi} but with respect to $\theta _{\ell }$ (instead of $\theta 
$). This implies that $\mathcal{M}_{\Lambda _{\ell }}\rightarrow \mathcal{M}%
_{\Lambda }$, thereby yielding $\lambda (\mathcal{M}_{\Lambda _{\ell
}})\rightarrow \lambda (\mathcal{M}_{\Lambda })$. Combined with %
\eqref{eq-persistence-poly}, the continuity of the power follows.

Next, we consider $\theta /2\pi \in \mathbb{Q}$. Take $Q_{\ell }\rightarrow
Q $ such that $\theta _{\ell }/2\pi \not\in \mathbb{Q}$. We see that $\phi
^{(\ell )}(\mathbf{t})=\Vert \mathbf{t}\Vert $ for all $\ell $ while we have 
$\phi (\mathbf{t})>-\Vert \mathbf{t}\Vert $ for almost surely all $\mathbf{t}%
\in \mathbb{R}^{2}$. This implies that $\mathcal{M}_{\Lambda }\subset 
\mathcal{M}_{\Lambda _{\ell }}$ and the Lebesgue measure of the set $%
\mathcal{M}_{\Lambda _{\ell }}\setminus \mathcal{M}_{\Lambda }$ is lower
bounded by a positive number for all $\ell $. By \cite[Theorem 2.4]{BNV94},
we deduce that $\lim_{\ell \rightarrow \infty }\lambda (\mathcal{M}_{\Lambda
_{\ell }})<\lambda _{\mathcal{M}_{\Lambda }}$. Combined with %
\eqref{eq-persistence-poly}, the discontinuity of the power follows.

\appendix

\section{Elementary facts}

We collect here elementary facts from linear algebra and analysis that we
use in this paper.

\begin{lemma}
\label{lemma-convolution} Fix $0<\theta <2\pi $ and $k\geq 0$. There exists
a constant $C<\infty $ such that for all $\theta _{0}\in \mathbb{R}$,%
\begin{equation}
|\mbox{$\sum_{i=1}^n$}\cos (i\theta +\theta _{0})i^{k}|\leq Cn^{k}\,,%
\mbox{ for
all }n\in \mathbb{N}\,.  \label{eq-lemma-convolution}
\end{equation}
\end{lemma}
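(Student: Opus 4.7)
The plan is to reduce the weighted trigonometric sum to the unweighted one via Abel (summation by parts), and then bound the unweighted partial sums by a geometric-series argument that exploits $e^{i\theta}\neq 1$.

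First I would set $T_n := \sum_{i=1}^n \cos(i\theta + \theta_0)$ and show that $\sup_n |T_n| \leq M$ for a constant $M = M(\theta)$ independent of $\theta_0$ and $n$. Writing $\cos(i\theta+\theta_0) = \operatorname{Re}(e^{i\theta_0} e^{ii\theta})$ and summing the geometric series gives
\[
T_n \;=\; \operatorname{Re}\!\left( e^{i\theta_0}\cdot \frac{e^{i\theta}(e^{in\theta}-1)}{e^{i\theta}-1}\right),
\]
so $|T_n| \leq 2/|e^{i\theta}-1|$, which is finite because $\theta \in (0,2\pi)$ guarantees $e^{i\theta}\neq 1$. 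This is where the hypothesis on $\theta$ enters, and essentially the only place.

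Next I would apply Abel summation to the weighted sum $S_n := \sum_{i=1}^n \cos(i\theta+\theta_0)\, i^k$. With $c_i := \cos(i\theta+\theta_0) = T_i - T_{i-1}$ (and $T_0 := 0$),
\[
S_n \;=\; T_n\, n^k \;-\; \sum_{i=1}^{n-1} T_i\bigl((i+1)^k - i^k\bigr).
\]
Using $|T_i| \leq M$, this gives
\[
|S_n| \;\leq\; M n^k + M \sum_{i=1}^{n-1}\bigl|(i+1)^k - i^k\bigr|.
\]

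Finally, I would bound the remaining sum. For $k=0$ the increments vanish and $|S_n|\leq M$. For $k\geq 1$ the increments are nonnegative, so the sum telescopes to $n^k - 1 \leq n^k$, yielding $|S_n|\leq 2Mn^k$. Taking $C := 2M = 4/|e^{i\theta}-1|$ (or any convenient upper bound) proves \eqref{eq-lemma-convolution}. The argument is entirely elementary; the only subtlety is keeping the bound uniform in $\theta_0$, which is automatic because $\theta_0$ only contributes the phase factor $e^{i\theta_0}$ of modulus $1$ in the geometric-series computation.
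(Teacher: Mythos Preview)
Your proof is correct. Both your argument and the paper's rest on a summation-by-parts idea, but the execution differs. The paper first treats $\theta_0=0$: it multiplies the sum by $\sin\theta$ and uses the product-to-sum identity $2\sin\theta\cos(i\theta)=\sin((i+1)\theta)-\sin((i-1)\theta)$ to create a telescoping-like structure directly on the weighted sum, obtains an analogous bound for $\sum_i \sin(i\theta)i^k$, and then combines the two via $\cos(i\theta+\theta_0)=\cos\theta_0\cos(i\theta)-\sin\theta_0\sin(i\theta)$. You instead bound the unweighted partial sums $T_n$ uniformly in $\theta_0$ via the complex-exponential geometric series and then apply Abel summation against the monotone weights $i^k$, letting the telescoping happen on the weight side. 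Your route is a bit cleaner: it handles $\theta_0$ from the outset, avoids the separate treatment of $\theta=\pi$ that the paper needs (since $\sin\theta=0$ there), and gives the explicit constant $C=4/|e^{i\theta}-1|$. The paper's version, on the other hand, stays entirely within real trigonometric identities.
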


\begin{proof}
We first consider the case $\theta _{0}=0$, in which obviously the lemma
holds for $\theta =\pi $, and thus we assume in what follows $\theta \neq
\pi $. Because%
\begin{align}
& \sin \theta \mbox{$\sum_{i=1}^n$}\cos (i\theta )i^{k}=\tfrac{1}{2}%
\mbox{$\sum_{i=1}^n$}(\sin ((i+1)\theta )-\sin ((i-1)\theta ))i^{k}  \notag
\\
& =\tfrac{1}{2}(n^{k}\sin (\theta (n+1))+\sum_{i=1}^{n}\sin (i\theta
)((i-1)^{k}-(i+1)^{k}))=O(n^{k})+\sum_{i=1}^{n}O(i^{k-1})=O(n^{k})\,,
\label{eq_convolution_1}
\end{align}%
dividing both sides by $\sin \theta $ completes the proof of this case. With
the similar argument we can see that%
\begin{equation}
\mbox{$\sum_{i=1}^n$}\sin (i\theta )i^{k}=O(n^{k}).  \label{eq_convolution_2}
\end{equation}%
Expanding $\cos (i\theta +\theta _{0})$ as $\cos (\theta _{0})\cos (i\theta
)-\sin (\theta _{0})\sin (i\theta )$, (\ref{eq-lemma-convolution}) is
implied by (\ref{eq_convolution_1}) and (\ref{eq_convolution_2}).
\end{proof}

\begin{lemma}
\label{lemma-polynomial} Consider a polynomial $P(z)=z^{2}+bz+c$ with $%
b^{2}-4c<0$. Then, there exists a finite $n_{0}$ and $b_{0},\ldots
b_{n_{0}}\in \mathbb{R}_{+}$ such that $\sum_{k=0}^{n_{0}}b_{k}z^{k}P(z)$ is
a non-negative polynomial.
\end{lemma}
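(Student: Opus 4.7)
The hypothesis $b^2 - 4c < 0$ forces $c > 0$, so we may write $r := \sqrt{c} > 0$ and $b = -2r\cos\theta$ with $\theta \in (0,\pi)$, giving $P(z) = (z - re^{i\theta})(z - re^{-i\theta})$. The key point is that
\begin{equation*}
P(x) = (x - r\cos\theta)^2 + r^2\sin^2\theta \geq r^2 \sin^2\theta > 0 \quad \text{for every } x \in \R,
\end{equation*}
so $P$ is strictly positive on $[0,\infty)$. The lemma is then a special case of P\'olya's classical theorem on polynomials positive on the non-negative ray; my plan is to verify the particular case at hand directly, by taking $b_k := \binom{n_0}{k}$ for $k = 0, \ldots, n_0$, i.e., $Q(z) := (1+z)^{n_0}$, and showing that the coefficients of $P(z)Q(z)$ are all non-negative once $n_0$ is sufficiently large.

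The coefficient of $z^j$ in $P(z)(1+z)^{n}$ is
\begin{equation*}
a_j(n) := \binom{n}{j-2} + b \binom{n}{j-1} + c \binom{n}{j}, \qquad 0 \leq j \leq n+2,
\end{equation*}
with the convention $\binom{n}{k} = 0$ for $k \notin \{0, \ldots, n\}$. Fix an integer $C$ large enough that $(1-1/C)^2 > \cos^2\theta$; this is possible because $\cos^2\theta < 1$. I split the range of $j$ into three regimes. For the trivial boundary $j \in \{0, 1, n+1, n+2\}$, direct calculation gives $a_j(n) \in \{r^2,\, r^2 n + b,\, n + b,\, 1\}$, each positive once $n$ is large. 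For $j \in \{2, \ldots, C-1\}$, $a_j(n)$ is a polynomial in $n$ of degree $j$ with positive leading coefficient $c/j!$, hence positive for $n$ large; the symmetric range $j \in \{n-C+3, \ldots, n\}$ is handled identically via $k := n + 2 - j$, where $a_j(n) = \binom{n}{k} + b\binom{n}{k-1} + c\binom{n}{k-2}$ is a polynomial in $n$ of degree $k$ with leading coefficient $1/k!$. Finally, for the interior $C \leq j \leq n - C + 2$, I factor out $\binom{n}{j-1}$ to obtain $a_j(n) = \binom{n}{j-1}\bigl[s_j + b + r^2 t_j\bigr]$, where $s_j := (j-1)/(n-j+2)$ and $t_j := (n-j+1)/j$; by AM--GM, $s_j + r^2 t_j \geq 2r\sqrt{s_j t_j}$, so the bracket is at least $2r(\sqrt{s_j t_j} - \cos\theta)$, and the choice of $C$ guarantees $s_j t_j \geq (1-1/C)^2 > \cos^2\theta$, making this non-negative uniformly in the interior regime.

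The main obstacle is purely bookkeeping: ensuring uniformity over all $j$ as $n$ is taken large, and choosing the cutoff $C$ compatibly with the strict inequality $\cos^2\theta < 1$ inherited from the discriminant hypothesis. There is no conceptual difficulty once one records the lower bound $P \geq r^2\sin^2\theta$ on $[0,\infty)$; the three-regime decomposition above then converts that positivity into the uniform non-negativity of $a_j(n)$ for some $n_0 = n_0(\theta, r)$.
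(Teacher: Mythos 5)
Your proof is correct, and it takes a genuinely different route from the paper's. The paper chooses $b_k$ by the recursion $b_0=1$, $b_1=-b/c$, $b_k = -(b\,b_{k-1}+b_{k-2})/c$, which is engineered so that $\sum_{k=0}^{n} b_k z^k P(z)$ telescopes: all interior coefficients vanish and only three monomials survive, namely $c + (b_{n-1}+b b_n)z^{n+1} + b_n z^{n+2}$. Stopping at $n_0 = \tau - 1$, where $\tau$ is the first index with $b_\tau \le 0$, then forces all three to be non-negative, and the finiteness of $\tau$ follows because $(b_k)$ solves a linear recurrence whose characteristic roots are a non-real conjugate pair, so its sign must oscillate. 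Your argument instead takes the \emph{canonical} P\'olya multiplier $(1+z)^{n_0}$ and verifies coefficient positivity directly, splitting into boundary, near-boundary, and interior regimes, with the interior handled by the AM--GM bound $s_j + c\,t_j \ge 2\sqrt{c\,s_j t_j}$ and the discriminant hypothesis entering via $\cos^2\theta < 1$. Both are complete; the paper's version is slicker in that the coefficient bookkeeping collapses to three terms, whereas yours is more robust and recognizable (it is a self-contained verification of a special case of P\'olya's theorem, with an explicit and natural choice of multiplier) at the cost of a three-regime case analysis. One cosmetic observation: when $b \ge 0$ (equivalently $\cos\theta \le 0$), $P$ itself already has non-negative coefficients, and your interior bound $\sqrt{s_j t_j} - \cos\theta \ge 0$ holds without any constraint on $C$; the constraint $(1-1/C)^2 > \cos^2\theta$ is only needed for $\cos\theta > 0$, though stating it uniformly as you did is harmless.
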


\begin{proof}
Let $b_{0}=1$, $b_{1}=-b/c$, and $b_{k}=-\tfrac{bb_{k-1}+b_{k-2}}{c}$ for $%
k\geq 2$. It is straightforward to verify that%
\begin{equation}
Q_{n}(z)\overset{\scriptscriptstyle\triangle }{=}%
\sum_{k=0}^{n}b_{k}z^{k}P(z)=c+(b_{n-1}+bb_{n})z^{n+1}+b_{n}z^{n+2}=Q_{n-1}(z)+b_{n}z^{n}P(z)\,.
\label{eq-Q_n}
\end{equation}%
Let $\tau =\min \{k:b_{k}\leq 0\}$ and $n_{0}=\tau -1$. Recalling that $c>0$%
, it follows from \eqref{eq-Q_n} that $%
Q_{n_{0}}(z)=c+b_{n_{0}}z^{n_{0}+2}-b_{\tau }cz^{\tau }$, having all terms
non-negative. Thus, it remains to show that $\tau <\infty $. By our
assumption that $b^{2}<4c$, we see that the polynomial $cx^{2}+bx+1$ has two
conjugate zeroes, for which we denote by $\lambda $ and $\bar{\lambda}$. A
standard analysis on our recursive procedure on $b_{k}$ yields that%
\begin{equation*}
b_{k}=a_{1}\lambda ^{k}+\overline{a}_{1}\bar{\lambda}^{k}=2\mathrm{Re}%
(a_{1}\lambda ^{k})\,,
\end{equation*}%
where $a_{1}$ and $a_{2}$ are determined by the boundary conditions $b_{0}=1$
and $b_{1}=-b/c$. Since $\lambda \in \mathbb{C}\setminus \mathbb{R}$, it is
clear that $\tau <\infty $, as desired.
\end{proof}

The next corollary is an immediate consequence of Lemma \ref%
{lemma-polynomial}.

\begin{cor}
\label{cor-sub-negative} Suppose that a polynomial $Q(z)$ has no positive
zero. Then, there exists a non-negative polynomial $P(z)$ such that $Q(z)
P(z)$ is also a non-negative polynomial.
\end{cor}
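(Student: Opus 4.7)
The plan is to reduce to the already-established quadratic case. First I would factor the real polynomial $Q(z)$ over $\R$ as
\[
Q(z) = c\prod_{i} (z+\alpha_i) \prod_{j}(z^2 + b_j z + c_j),
\]
where the linear factors correspond to the real zeros of $Q$ and the irreducible quadratic factors correspond to its (necessarily conjugate) pairs of complex zeros. The hypothesis that $Q$ has no positive zero forces $\alpha_i \geq 0$ for every $i$, and irreducibility of the quadratic factors over $\R$ gives $b_j^2 - 4 c_j < 0$ for every $j$. Without loss of generality the leading coefficient $c$ is positive, since if $c<0$ then no nonzero non-negative $P$ can make $QP$ non-negative (the leading coefficient of $QP$ would be negative), so the statement is understood up to a sign convention.

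Next I would apply Lemma~\ref{lemma-polynomial} to each quadratic factor $z^2 + b_j z + c_j$, which produces a non-negative polynomial $P_j(z)$ such that $P_j(z)(z^2 + b_j z + c_j)$ is itself non-negative. Setting $P(z) := \prod_j P_j(z)$, I then have
\[
Q(z)P(z) = c \prod_i (z+\alpha_i) \prod_j \bigl(P_j(z)(z^2 + b_j z + c_j)\bigr).
\]

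The conclusion follows from the elementary observation that the collection of non-negative polynomials is closed under multiplication: the coefficient of $z^k$ in a product $AB$ equals $\sum_{i+j=k} a_i b_j$, a sum of products of non-negative reals, hence itself non-negative. Each linear factor $(z+\alpha_i)$ is non-negative because $\alpha_i \geq 0$, each grouped factor $P_j(z)(z^2 + b_j z + c_j)$ is non-negative by construction of $P_j$, and $c > 0$, so $Q(z)P(z)$ is a product of non-negative polynomials and therefore non-negative. All the substantive content is contained in Lemma~\ref{lemma-polynomial} itself; the corollary is then purely bookkeeping, with no serious obstacle to confront.
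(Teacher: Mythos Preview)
Your proof is correct and follows essentially the same approach as the paper: factor $Q$ over $\R$ into linear factors $(z+\alpha_i)$ with $\alpha_i\ge 0$ and irreducible quadratics, apply Lemma~\ref{lemma-polynomial} to each quadratic to get non-negative $P_j$, and take $P=\prod_j P_j$. Your extra remarks about the leading coefficient and about closure of non-negative polynomials under multiplication are sound and make explicit what the paper leaves implicit (in context the paper's $Q$ is monic).
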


\begin{proof}
By assumption, we can write $Q(z)=\prod_{i=1}^{k}(z+a_{i})%
\prod_{i=1}^{n}(z^{2}+b_{i}z+c_{i})$ such that $a_{i}\geq 0$ and $%
b_{i}^{2}<4c_{i}$ for all $i$. By Lemma~\ref{lemma-polynomial}, for each $i$
there exists non-negative polynomial $P_{i}(z)$ such that $%
P_{i}(z)(z^{2}+b_{i}z+c_{i})$ has non-negative coefficients for all terms.
Letting $P(z)=\prod_{i=1}^{n}P_{i}(z)$, we have $Q(z)P(z)=%
\prod_{i=1}^{k}(z+a_{i})\prod_{i=1}^{n}(P_{i}(z)(z^{2}+b_{i}z+c_{i}))$ is
non-negative, as required.
\end{proof}

\begin{lemma}
\label{lem-rotation-stretched-exponential} Given distinct $\theta
_{1},\ldots ,\theta _{\ell }\in (0,\pi ]$, there exists $K>0$ such that for
any $(r_{j},\gamma _{j})_{j=1,\ldots ,\ell }$ with the constraint that $%
\gamma _{j}\in \{0,\pi \}$ if $\theta _{j}=\pi $, there exists $0\leq i\leq
K $ such that%
\begin{equation*}
\sum_{j=1}^{\ell }r_{j}\cos (i\theta _{j}+\gamma _{j})\leq -\frac{1}{4}%
\max_{j}|r_{j}|\,.
\end{equation*}
\end{lemma}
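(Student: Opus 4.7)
The plan is to combine a Ces\`aro averaging argument with a compactness reduction. First I would normalize $\max_j |r_j| = 1$ by scaling and fix $j_0$ with $|r_{j_0}| = 1$; set $f(i) := \sum_j r_j \cos(i\theta_j + \gamma_j)$ and $g(i) := \cos(i\theta_{j_0} + \gamma_{j_0})$. Since each $\theta_j \in (0, \pi]$ satisfies $\theta_j \not\equiv 0 \pmod{2\pi}$, a geometric-sum bound yields $\mathrm{avg}(f) := \lim_{N\to\infty} N^{-1}\sum_{i=0}^{N-1} f(i) = 0$. Applying $\cos A \cos B = \tfrac{1}{2}[\cos(A-B)+\cos(A+B)]$ and using that the $\theta_j$'s are distinct in $(0,\pi]$ (so $\theta_{j_0} \pm \theta_j \not\equiv 0 \pmod{2\pi}$ for $j \neq j_0$) together with the constraint $\gamma_{j_0} \in \{0, \pi\}$ when $\theta_{j_0} = \pi$ (which forces $\cos(2\gamma_{j_0}) = 1$), all cross-terms vanish and
\[
\mathrm{avg}(f g) = r_{j_0} c_{j_0}, \qquad c_{j_0} = 1/2 \text{ if } \theta_{j_0} \in (0, \pi),\ c_{j_0} = 1 \text{ if } \theta_{j_0} = \pi,
\]
so in particular $|\mathrm{avg}(f g)| = c_{j_0} \geq 1/2$.

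Next I would derive a strict lower bound on $M := \sup_i f(i)_- = |\inf_i f(i)|$. Using $|g| \leq 1$ and $f_- \leq M$, together with the identity $\mathrm{avg}(|f|) = 2\,\mathrm{avg}(f_-)$ (a consequence of $\mathrm{avg}(f)=0$), the chain
\[
c_{j_0} = |\mathrm{avg}(fg)| \leq \mathrm{avg}(|f|) = 2\,\mathrm{avg}(f_-) \leq 2M
\]
gives $M \geq c_{j_0}/2 \geq 1/4$. To upgrade to strict inequality, suppose $\mathrm{avg}(f_-) = M$; since $M - f_- \geq 0$ has time-average zero, the set $\{i : f_-(i) < M\}$ has density zero, so $f \equiv -M$ on a density-one set and hence $\mathrm{avg}(f) = -M$. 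This contradicts $\mathrm{avg}(f) = 0$ unless $M = 0$; but $M = 0$ forces $f \equiv 0$, contradicting $|\mathrm{avg}(fg)| = c_{j_0} \geq 1/2 > 0$. Hence $M > 1/4$ strictly, and there exists $i_* = i_*(r, \gamma) \in \mathbb{N}$ with $f(i_*) < -1/4$.

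Finally, to produce a $K$ independent of $(r, \gamma)$, I would observe that the configuration space $C := \{(r, \gamma) : \max_j |r_j| = 1,\ \gamma_j \in \{0, \pi\} \text{ when } \theta_j = \pi\}$ is compact. For each $(r, \gamma) \in C$ the previous step furnishes an index $i_*(r, \gamma)$ with $f(i_*, r, \gamma) < -1/4$; by continuity of $(r, \gamma) \mapsto f(i_*, r, \gamma)$ the same strict inequality persists on an open neighborhood of $(r, \gamma)$ in $C$. Extracting a finite subcover of $C$ and taking $K$ to be the maximum of the corresponding indices $i_*$ yields the desired universal bound, and the general (un-normalized) case follows by rescaling (with the claim being trivial when $\max_j |r_j| = 0$).

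The main obstacle is securing the \emph{strict} inequality $\inf_i f(i) < -1/4$ in the averaging step, rather than the non-strict $\leq$; once strictness is in hand, the compactness reduction is routine. Strictness is forced by the simultaneous constraints $\mathrm{avg}(f) = 0$ and $|\mathrm{avg}(fg)| \geq 1/2$, which together exclude the only degenerate scenario (namely $f$ constantly $-M$ on a density-one set) in which the non-strict chain could be tight.
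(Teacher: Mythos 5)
Your argument is correct in essence, but it takes a genuinely different route from the paper. The paper works entirely over a finite window: it uses Lemma~\ref{lemma-convolution} to bound the partial sums $\sum_{i=0}^{K}\alpha_i$ and, after squaring and expanding, to lower-bound $\sum_{i=0}^{K}\alpha_i^2$ by roughly $Kr^2/3$ for an \emph{explicit} choice $K=12\ell^2(C\vee1)^2$; the contradiction then comes from the tension between a small first moment and a large second moment. You instead pass to the full Ces\`aro limits, extract the single testing identity $|\mathrm{avg}(fg)|=c_{j_0}\geq 1/2$, and run the $L^1$--$L^\infty$ chain $c_{j_0}\leq \mathrm{avg}(|f|)=2\,\mathrm{avg}(f_-)\leq 2M$, upgrading to strictness by a rigidity argument, and finally manufacturing a uniform $K$ by compactness of the parameter space. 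Your inequality chain is arguably simpler than the paper's $L^2$ computation, but the price is that you lose the explicit value of $K$; the compactness step is what restores finiteness, and it is indeed routine once strictness is in hand.

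One point deserves a touch more care. From ``$M-f_-\geq 0$ has time-average zero'' you conclude that $\{i:f_-(i)<M\}$ has density zero; for a general bounded nonnegative sequence with zero average one only gets that $\{i:f_-(i)<M-\eta\}$ has density zero for each $\eta>0$, and the union over $\eta$ could in principle have full density. Here it is fine either because $f_-$ is Bohr almost periodic (a continuous function of the trigonometric polynomial $f$), so a nonnegative almost periodic function with zero mean vanishes identically, or more elementarily because $\{f_+>0\}\subseteq\{f_-\leq M/2\}=\{M-f_-\geq M/2\}$ already has density zero when $M>0$, which directly gives $\mathrm{avg}(f_+)=0$ and hence $\mathrm{avg}(f)=-M$, the contradiction you want. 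Stating one of these two justifications explicitly would close the small gap; the remainder of your argument, including the separate handling of $M=0$ via $\mathrm{avg}(|f|)=0$ against $|\mathrm{avg}(fg)|\geq 1/2$, is sound.
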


\begin{proof}
Write $\alpha _{i}:=\sum_{j=1}^{\ell }r_{j}\cos (i\theta _{j}+\gamma _{j})$
and $r:=\max_{j}|r_{j}|$. Due to the fact that $\theta _{j}$'s are distinct
and belong to $(0,\pi ]$, we can apply Lemma~\ref{lemma-convolution} and see
that there exists a constant $C<\infty $ such that for all $K\in \mathbb{N}$,%
\begin{equation*}
\sum_{i=0}^{K}\alpha _{i}\leq Cr\ell ,
\end{equation*}%
\begin{equation*}
\sum_{i=0}^{K}\sum_{j=1}^{\ell }\cos (2i\theta _{j}+2\gamma _{j})\leq C\ell ,%
\text{ for those }\theta _{j}\neq \pi \text{,}
\end{equation*}%
\begin{equation}
\sum_{i=0}^{K}(\cos (i(\theta _{j_{1}}\pm \theta _{j_{2}})+\gamma
_{j_{1}}+\gamma _{j_{2}}))\leq C\text{ for all }j_{1},j_{2}\in \{1,...,\ell
\}\text{ with }j_{1}\neq j_{2}.  \label{eq-lem-rotation-stretched-1}
\end{equation}%
Note that if $\theta _{j}=\pi $ then $\sum_{i=0}^{K}\cos (2i\theta
_{j}+2\gamma _{j})=K+1$. Choosing $K=12\ell ^{2}(C\vee 1)^{2}$, by (\ref%
{eq-lem-rotation-stretched-1}) and direct expansion we have{\small 
\begin{align*}
\sum_{i=0}^{K}\alpha _{i}^{2}& =\sum_{j=1}^{\ell }\sum_{i=0}^{K}(r_{j}\cos
(i\theta _{j}+\gamma _{j}))^{2}+\sum_{j_{1}\neq
j_{2}}\sum_{i=0}^{K}(r_{j_{1}}\cos (i\theta _{j_{1}}+\gamma
_{j_{1}})(r_{j_{2}}\cos (i\theta _{j_{2}}+\gamma _{j_{2}}) \\
& \geq \sum_{|r_{j}|=r}\sum_{i=0}^{K}\frac{r_{j}^{2}}{2}(1+\cos (2i\theta
_{j}+2\gamma _{j}))-\frac{r^{2}}{2}\sum_{j_{1}\neq j_{2}}\sum_{i=0}^{K}(\cos
(i(\theta _{j_{1}}+\theta _{j_{2}})+\gamma _{j_{1}}+\gamma _{j_{2}}) \\
& +\cos (i(\theta _{j_{1}}-\theta _{j_{2}})+\gamma _{j_{1}}-\gamma
_{j_{2}}))\, \\
& \geq (\frac{K}{2}-C)r^{2}-C\ell ^{2}r^{2}\geq \frac{K}{3}r^{2}.
\end{align*}%
}Combined with the fact that $\sum_{i=0}^{K}\alpha _{i}\leq Cr\ell $, with
the method of contradiction, the conclusion of the lemma easily follows.
\end{proof}

\begin{lemma}
\label{Lemma-roration-general-case}Let $\theta _{j}$'s, $\theta _{j,k}$'s be
within $[0,2\pi )$ such that $\theta _{\lambda _{j_{1}},k}=-\theta _{\lambda
_{j_{2}},k}$ if $\theta _{j_{1}}=-\theta _{j_{2}}$, $\theta _{\lambda ,j}=0$
if $\lambda =0$ or $\pi $, and let real $c_{j_{1},k}$'s satisfy $%
c_{j_{1},k}=c_{j_{2},k}$ if $\theta _{j_{1}}=-\theta _{j_{2}}$. There exist $%
M$ and $C>0$ depending only on $\theta _{j}$'s, $c_{j,k}$'s, $\theta _{j,k}$%
's and $m^{\prime }$, such that for any $\{$\textbf{$\overrightarrow{\mathbf{%
S}}$}$_{j,n+s,k},s\in \mathbb{N}\}$ with \textbf{$\overrightarrow{\mathbf{S}}
$}$_{j,n+s,k}:=[\mathbf{\mathbf{S}}_{j,n+s,k},\mathbf{\mathbf{S}}%
_{j,n+s,k}^{\prime }]$ and%
\begin{equation}
\mathbf{\overrightarrow{\mathbf{S}}}_{j,n+s,k}=\left( \mathsf{R}_{\theta
_{j}}\right) ^{s}(\sum_{i=0}^{k}P_{s}(k-i)\mathsf{R}_{\theta _{j,k}-\theta
_{j,i}}\mathbf{\overrightarrow{\mathbf{S}}}_{j,n,i})  \label{eq-s-iter}
\end{equation}%
for%
\begin{equation}
P_{s}(x):=\binom{s-1+x}{x}\text{,}  \label{eq-def-P-s}
\end{equation}%
there exists $s\in \{1,...,M\}$ such that%
\begin{equation}
\sum_{j=1}^{\ell }\sum_{k=0}^{m_{j}-1}c_{j,k}\mathbf{S}_{j,n+s,k}\leq
-C||\sum_{j=1}^{\ell }\sum_{k=0}^{m_{j}-1}c_{j,k}\mathbf{\overrightarrow{%
\mathbf{S}}}_{j,n,k}||.  \label{lemma-stretch-T-0-n}
\end{equation}
\end{lemma}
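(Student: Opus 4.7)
I will first rewrite the expression $f(s):=\sum_{j,k} c_{j,k}\mathbf{S}_{j,n+s,k}$ as an explicit real trigonometric polynomial in $s$ with polynomial modulations, and then deduce \eqref{lemma-stretch-T-0-n} by a compactness argument on the finite-dimensional space of admissible initial data $\{\overrightarrow{\mathbf{S}}_{j,n,i}\}_{j,i}$. Setting
\[
\mathbf{u}_j(s):=\sum_{k=0}^{m_j-1} c_{j,k}\sum_{i=0}^{k}P_s(k-i)\,\mathsf{R}_{\theta_{j,k}-\theta_{j,i}}\,\overrightarrow{\mathbf{S}}_{j,n,i},
\]
the iteration \eqref{eq-s-iter} together with \eqref{eq-def-P-s} and $P_0(x)=\mathbf{1}\{x=0\}$ gives $\mathbf{u}_j(0)=\sum_k c_{j,k}\overrightarrow{\mathbf{S}}_{j,n,k}$, so that $W_n:=\sum_j\mathbf{u}_j(0)$ is precisely the vector whose norm appears on the right-hand side of \eqref{lemma-stretch-T-0-n}, and
\[
f(s) = \sum_{j=1}^{\ell}\bigl[\cos(s\theta_j)\,u_{j,1}(s)-\sin(s\theta_j)\,u_{j,2}(s)\bigr],
\]
a sum, over $j$, of polynomials in $s$ of degree $\leq m_j-1$ modulated by $\cos(s\theta_j)$ and $\sin(s\theta_j)$. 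The task thus reduces to producing $M,C>0$, depending only on the parameters listed in the statement, so that $\min_{1\leq s\leq M}f(s)\leq -C\|W_n\|$ for every data configuration.

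Let $V$ denote the finite-dimensional real vector space of initial data. The maps $\Psi:V\to\mathbb{R}^2$, $\mathbf{d}\mapsto W_n$, and $\Phi_s:V\to\mathbb{R}$, $\mathbf{d}\mapsto f(s)$, are linear, so $g_M(\mathbf{d}):=\max_{1\leq s\leq M}(-\Phi_s(\mathbf{d}))$ is continuous and positively $1$-homogeneous. After quotienting out the subspace $V_0:=\ker\Psi\cap\bigcap_{s\geq 0}\ker\Phi_s$ on which both sides of \eqref{lemma-stretch-T-0-n} vanish trivially, by homogeneity it suffices to prove a uniform lower bound $g_M\geq c>0$ on the slice $\{\mathbf{d}\in V/V_0:\|\Psi(\mathbf{d})\|=1\}$. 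Fixing a $2$-dimensional complement $V'$ of $\ker\Psi$ in $V/V_0$, decompose $\mathbf{d}=\mathbf{v}+\mathbf{k}$ with $\mathbf{v}\in V'$, $\mathbf{k}\in\ker\Psi$. In the bounded regime $\|\mathbf{k}\|\leq R$, continuity of $g_M$ on the compact set $\{\|\Psi(\mathbf{v})\|=1,\ \|\mathbf{k}\|\leq R\}$ reduces the desired bound to strict pointwise positivity of $g_M$, i.e.\ to the qualitative statement that the trigonometric polynomial $f(\,\cdot\,;\mathbf{d})$ is not nonnegative on $\{1,\dots,M\}$ whenever $\mathbf{d}\neq 0$ in $V/V_0$. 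For $\|\mathbf{k}\|>R$ one combines the obvious $O(1)$ bound on $\Phi_{s_\star}(\mathbf{v})$ over the slice with the quantitative \emph{kernel bound} $-\Phi_{s_\star}(\mathbf{k})\geq c_2\|\mathbf{k}\|$ stated below, to obtain $g_M(\mathbf{d})\geq c_2\|\mathbf{k}\|-O(1)\to\infty$ as $\|\mathbf{k}\|\to\infty$.

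The technical heart, and the step I expect to be the main obstacle, is the quantitative kernel bound: there exist $M,c_2>0$ such that
\[
\min_{1\leq s\leq M}f(s;\mathbf{k})\leq -c_2\|\mathbf{k}\|\quad\text{uniformly for }\mathbf{k}\text{ on the unit sphere of }\ker\Psi/V_0.
\]
In this regime $W_n=0$, and $f(\,\cdot\,;\mathbf{k})=\sum_j\mathrm{Re}(e^{is\theta_j}P_j(s))$ is nontrivial (by construction of $V_0$), with $\theta_j\not\equiv 0\pmod{2\pi}$ and complex polynomials $P_j$ of degree $\leq m_j-1$. Setting $d:=\max_j\deg P_j$ and $J:=\{j:\deg P_j=d\}$, the leading asymptotics read $f(s;\mathbf{k})=s^d[g(s)+O(1/s)]$ with $g(s)=\sum_{j\in J}\mathrm{Re}(e^{is\theta_j}a_j)$ a nontrivial mean-zero almost-periodic function; by the argument of Lemma \ref{lem-rotation-stretched-exponential} applied to the $J$-frequencies there is a bounded integer $s_\star$ with $g(s_\star)\leq -\tfrac14\max_{j\in J}|a_j|$, hence $f(s_\star;\mathbf{k})\leq -c\,s_\star^d\|\mathbf{k}\|$ once $s_\star$ is taken large enough (still bounded in terms of the $\theta_j$'s) to absorb the $O(1/s)$ error. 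A covering argument on the compact unit sphere of $\ker\Psi/V_0$ then produces a single $M$ that works for every direction of $\mathbf{k}$. The delicate point I will need to handle carefully is uniformizing $M$ across directions: different extremal $\mathbf{k}$'s privilege different subsets $J$ and different $s_\star$'s, so at the boundary configurations (where $d$ drops or where two $|a_j|$'s become comparable) a semicontinuity analysis of the ``extremal $s_\star$'' as a function of $\mathbf{k}$ is required to pass from the pointwise existence of $s_\star$ to a single $M$ that depends only on $\theta_j$'s, $\theta_{j,k}$'s, $c_{j,k}$'s and $m^\prime$.
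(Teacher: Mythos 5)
Your approach---treating the initial data as a point in a finite-dimensional vector space, expressing both sides of \eqref{lemma-stretch-T-0-n} as linear/homogeneous functionals of that data, and then running a compactness argument on a normalized slice---is genuinely different from what the paper does. The paper instead proceeds by induction on $m'=\max_j m_j$: the base case $m'=1$ is exactly Lemma~\ref{lem-rotation-stretched-exponential}, and in the inductive step it splits according to whether the lowest-order blocks $\overrightarrow{\mathbf{S}}_{j,n,0}$ are small compared to the higher-order ones (in which case the inductive hypothesis applied to the shifted data already suffices) or large (in which case one pushes $s$ out to a fixed range where $P_s(m')/P_s(k)$ is large for $k<m'$, so that the top-degree contribution, made very negative via Lemma~\ref{lem-rotation-stretched-exponential}, dominates). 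This dichotomy does the ``uniformization'' explicitly and never needs a geometric compactness argument.

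The gap in your proposal is that the quantitative kernel bound---the step you yourself call the ``technical heart''---is described but not proved. You outline: pick the top degree $d$, look at the leading almost-periodic part, invoke Lemma~\ref{lem-rotation-stretched-exponential}, absorb the $O(1/s)$ error. That is a sketch of a pointwise statement ``for each fixed direction $\mathbf{k}$ there is some $s_\star(\mathbf{k})$.'' To get the lemma you must then convert this into a single $M$ and $C$ working for all $\mathbf{k}$, and you claim a ``semicontinuity analysis of the extremal $s_\star$'' is required. In fact the semicontinuity concern is a distraction: once you have the pointwise claim, a plain finite subcover of the compact unit sphere of $\ker\Psi/V_0$ by the open sets $\{\mathbf{k}':f(s_\star(\mathbf{k});\mathbf{k}')<0\}$ already yields a uniform $M$ and then a uniform $C$ by continuity of $g_M$; no analysis of how $s_\star$ varies is needed. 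The piece that genuinely has to be written out is the pointwise claim itself (including what happens when the leading degree drops or leading coefficients for conjugate angle pairs could conceivably cancel; under the constraints $c_{j_1,k}=c_{j_2,k}$, $\theta_{j_1,k}=-\theta_{j_2,k}$ those contributions double rather than cancel, but this needs saying). As written, your argument names the key quantitative step and gestures at the mechanism but does not carry it out, while the paper's induction on $m'$ discharges exactly this difficulty by construction---so as it stands the proposal is a plausible but incomplete plan.
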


\begin{proof}
It suffices to upper bound the left hand side of (\ref{lemma-stretch-T-0-n})
by $-C^{\prime }\sum_{j=1}^{\ell }\sum_{k=0}^{m_{j}-1}||\mathbf{%
\overrightarrow{S}}_{j,n,k}||$ for some $C^{\prime }$ depending only on $%
\theta _{j}$'s, $c_{j,k}$'s, $\theta _{j,k}$'s and $m^{\prime }$. We use
induction on $m^{\prime }=\max_{j\geq 1}m_{j}$ to show this. By our
condition on $c_{j,k}$'s, if $\theta _{j_{1}}=-\theta _{j_{2}}$ then $%
c_{j_{1},k}\mathbf{S}_{j_{1},n+s,k}=c_{j_{2},k}\mathbf{S}_{j_{2},n+s,k}$.
Thus we can just consider those $\theta _{j}$'s $\in (0,\pi ]$. Due to $%
P_{s}(1)=1$ for all $s$, it is easy to see that when $m^{\prime }=1$ this
lemma is implied by Lemma \ref{lem-rotation-stretched-exponential}. Assume
that the lemma holds for $m^{\prime }-1$. Define%
\begin{equation*}
\mathbf{\overrightarrow{\mathbf{S}}}_{j,n,k}^{\star },=\left\{ 
\begin{array}{c}
\mathbf{\overrightarrow{\mathbf{S}}}_{j,n,k+1}\text{for }k=0,...,m_{j}-2%
\text{, if }m_{j}=m^{\prime } \\ 
\mathbf{\overrightarrow{\mathbf{S}}}_{j,n,k}\text{, if }m_{j}<m^{\prime }%
\end{array}%
\right. ,
\end{equation*}%
and similarly define \textbf{$\overrightarrow{\mathbf{S}}$}$%
_{j,n+s,k}^{\star }$ by relation (\ref{eq-s-iter}) for $s\in \mathbb{Z}^{+}$%
. Then by induction for $m^{\prime }-1$ and the definition of \textbf{$%
\overrightarrow{\mathbf{S}}$}$_{j,n,k}^{\star }$, we can find $M^{\prime }$
and $C^{\prime \prime }>0$ such that for any $\{$\textbf{$\overrightarrow{%
\mathbf{S}}$}$_{j,n+s,k},s\in \mathbb{Z}^{+}\}$, there exists $s\in
\{1,...,M^{\prime }\}$ such that%
\begin{eqnarray}
&&\sum_{\{j:m_{j}=m^{\prime }\}}\sum_{k=1}^{m_{j}-1}c_{j,k}\mathbf{S}%
_{j,n+s,k}+\sum_{\{j:m_{j}<m^{\prime }\}}\sum_{k=0}^{m_{j}-1}c_{j,k}\mathbf{S%
}_{j,n+s,k}  \notag \\
&\leq &-C^{\prime \prime }(\sum_{\{j:m_{j}=m^{\prime
}\}}\sum_{k=1}^{m_{j}-1}||\mathbf{\overrightarrow{\mathbf{S}}}%
_{j,n,k}||+\sum_{\{j:m_{j}<m^{\prime }\}}\sum_{k=0}^{m_{j}-1}||\mathbf{%
\overrightarrow{\mathbf{S}}}_{j,n,k}||).  \label{assumption-induction}
\end{eqnarray}%
Thus if{\small 
\begin{equation}
\sum_{\{j:m_{j}=m^{\prime }\}}\left\Vert \mathbf{\overrightarrow{\mathbf{S}}}%
_{j,n,0}\right\Vert \leq \frac{C^{\prime \prime }(\sum_{\{j:m_{j}=m^{\prime
}\}}^{{}}\sum_{k=1}^{m_{j}-1}||\mathbf{\overrightarrow{\mathbf{S}}}%
_{j,n,k}||+\sum_{\{j:m_{j}<m^{\prime }\}}\sum_{k=1}^{m_{j}-1}||\mathbf{%
\overrightarrow{\mathbf{S}}}_{j,n,k}||)}{4\left( \ell
\max_{j}\{c_{j,0}\}P_{M^{\prime }}(m^{\prime })\vee C^{\prime \prime
}\right) },  \label{condition-T-j-n-1}
\end{equation}%
}by (\ref{eq-T-j-n+s-k}) and (\ref{assumption-induction}) it is easy to
verify that this lemma holds by letting $C^{\prime }=C^{\prime \prime }/4$.
Now we consider the case when (\ref{condition-T-j-n-1}) doesn't hold. By
Lemma \ref{lem-rotation-stretched-exponential} we see that there exist $%
M^{\prime \prime }$ and $C^{\prime \prime \prime }>0$ such that for any $%
K_{1}\in \mathbb{Z}^{+}$ we can always find $s\in
\{K_{1},...,K_{1}+M^{\prime \prime }\}$ with%
\begin{equation}
\sum_{\{j:m_{j}=m^{\prime }\}}c_{j,m^{\prime }+1}[1,0](\mathsf{R}_{\theta
_{j}})^{s}\mathsf{R}_{\theta _{j,m^{\prime }}-\theta _{j,1}}\mathbf{%
\overrightarrow{\mathbf{S}}}_{j,n,0}<-C^{\prime \prime \prime
}\sum_{\{j:m_{j}=m^{\prime }\}}\left\Vert \mathbf{\overrightarrow{\mathbf{S}}%
}_{j,n,0}\right\Vert .  \label{bound-lemma-rotation-case-2}
\end{equation}%
Note that from (\ref{eq-def-P-s}) we have that $P_{s}(m^{\prime
})/P_{s}(k)\rightarrow \infty $ as $s\rightarrow \infty $ for any $%
k<m^{\prime }$. Thus by (\ref{eq-s-iter}) and the fact that (\ref%
{condition-T-j-n-1}) doesn't hold, there exists $K\in \mathbb{Z}^{+}$ only
depending on $M^{\prime }$ and $c_{j,k}$'s such that when $s>K${\small 
\begin{equation}
\frac{P_{s}(m^{\prime })\sum_{\{j:m_{j}=m^{\prime }\}}\left\Vert \mathbf{%
\overrightarrow{\mathbf{S}}}_{j,n,0}\right\Vert }{||\sum_{j=1}^{\ell
}\sum_{k=0}^{m_{j}-1}c_{j,k}\mathbf{\overrightarrow{\mathbf{S}}}%
_{j,n+s,k}-P_{s}(m^{\prime })\sum_{\{j:m_{j}=m^{\prime }\}}c_{j,m^{\prime
}+1}(\mathsf{R}_{\theta _{j}})^{s}\mathsf{R}_{\theta _{j,m^{\prime }}-\theta
_{j,1}}\mathbf{\overrightarrow{\mathbf{S}}}_{j,n,0}||}>\frac{4}{C^{\prime
\prime \prime }}.  \label{bound-lemma-rotation-case-2-2}
\end{equation}%
} Choosing $K_{1}=K$, then by (\ref{bound-lemma-rotation-case-2}) and (\ref%
{bound-lemma-rotation-case-2-2}) we can always find $s\in
\{K_{1},...,K_{1}+M^{\prime \prime }\}$ such that{\small 
\begin{equation*}
\sum_{j=1}^{\ell }\sum_{k=0}^{m_{j}-1}c_{j,k}\mathbf{S}_{j,n+s,k}\leq -\frac{%
3}{4}P_{s}(m^{\prime })C^{\prime \prime \prime }\sum_{\{j:m_{j}=m^{\prime
}\}}\left\Vert \mathbf{\overrightarrow{\mathbf{S}}}_{j,n,0}\right\Vert
<-C_{1}\sum_{j=1}^{\ell }\sum_{k=0}^{m_{j}-1}||\mathbf{\overrightarrow{%
\mathbf{S}}}_{j,n,k}||,
\end{equation*}%
}for some $C_{1}>0$, where the rightmost inequality is again by the fact
that (\ref{condition-T-j-n-1}) doesn't hold. Letting $M=M^{\prime }\vee
M^{\prime \prime }$, the proof is completed.
\end{proof}

\begin{lemma}
\label{lem-Jordan-matrix} For a polynomial $P$, let $\Lambda $ be its zero
set and for $\lambda \in \Lambda $ denote by $m(\lambda )$ the multiplicity
of the eigenvalue $\lambda $. Define an $\ell \times \ell $ matrix $\mathsf{A%
}=\mathsf{A}(P)$ by%
\begin{equation}
\mathsf{A}_{1,j}=a_{j}\mbox{ for }1\leq j\leq \ell ;\quad \mathsf{A}%
_{i,i-1}=1\mbox{
for }2\leq i\leq \ell ;\quad \mathsf{A}_{i,j}=0\mbox{ otherwise}.
\end{equation}%
Then there exist a basis $\{\mathbf{v}_{\lambda ,j}:\lambda \in \Lambda 
\mbox{
and }j=1,\ldots ,m(\lambda )\}$ in $\mathbb{R}^{L}$ which contains all the
eigenvectors of $\mathsf{A}$ such that for any $\mathbf{x}=\sum_{\lambda \in
\Lambda }\sum_{j=1}^{m(\lambda )}c_{\lambda ,j}\mathbf{v}_{\lambda ,j}$, we
have%
\begin{equation}
\mathsf{A}^{n}\mathbf{x}=\sum_{\lambda \in \Lambda }\sum_{r=1}^{m(\lambda
)}\sum_{j=0}^{m(\lambda )-r}\binom{n}{j}\lambda ^{n-j}c_{\lambda ,r+j}%
\mathbf{v}_{\lambda ,r}\,.  \label{eq-lemma-Hordan-matrix}
\end{equation}%
Furthermore, we have $\bar{\mathbf{v}}_{\lambda ,j}=\mathbf{v}_{\bar{\lambda}%
,j}$ and $\bar{c}_{\lambda ,j}=c_{\bar{\lambda},j}$.
\end{lemma}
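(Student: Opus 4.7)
The plan is to recognize $\mathsf{A}$ as a companion matrix whose characteristic polynomial (up to sign) is $P$, and then to read off the claimed formula from its Jordan decomposition. First I would verify that $\det(\lambda I - \mathsf{A}) = \pm P(\lambda)$, so the spectrum of $\mathsf{A}$ over $\mathbb{C}$ is exactly $\Lambda$ with the prescribed algebraic multiplicities. The key structural step is to argue that each $\lambda \in \Lambda$ has geometric multiplicity one: the bottom $\ell - 1$ rows of $\mathsf{A} - \lambda I$ contain their leading $1$'s in the distinct columns $1, 2, \ldots, \ell - 1$, hence are automatically linearly independent, which forces $\mathrm{rank}(\mathsf{A} - \lambda I) = \ell - 1$ whenever $\lambda \in \Lambda$. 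Consequently the Jordan form of $\mathsf{A}$ over $\mathbb{C}$ consists of exactly one block per distinct $\lambda \in \Lambda$, of size $m(\lambda)$.

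Given this, for each $\lambda \in \Lambda$ I would select a Jordan chain $\mathbf{v}_{\lambda, 1}, \ldots, \mathbf{v}_{\lambda, m(\lambda)}$ by solving $(\mathsf{A} - \lambda I)\mathbf{v}_{\lambda, 1} = 0$ and recursively $(\mathsf{A} - \lambda I)\mathbf{v}_{\lambda, r} = \mathbf{v}_{\lambda, r-1}$ for $r \geq 2$; existence is guaranteed by the Jordan structure just established, and the union of these chains over $\lambda \in \Lambda$ is a basis of $\mathbb{C}^{\ell}$. On each chain's span, $\mathsf{A}$ acts as $\lambda I + N$ where the nilpotent $N$ sends $\mathbf{v}_{\lambda, r} \mapsto \mathbf{v}_{\lambda, r-1}$ (with the convention $\mathbf{v}_{\lambda, 0} := 0$), so the binomial expansion yields
\begin{equation*}
\mathsf{A}^n \mathbf{v}_{\lambda, r} \;=\; \sum_{j=0}^{r-1} \binom{n}{j} \lambda^{n-j} \mathbf{v}_{\lambda, r-j}.
\end{equation*}
Expanding $\mathbf{x} = \sum_{\lambda, r} c_{\lambda, r} \mathbf{v}_{\lambda, r}$ and reindexing $s = r - j$ on the right-hand side then gives exactly \eqref{eq-lemma-Hordan-matrix}.

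The complex-conjugation claim comes almost for free. Since $\mathsf{A}$ has real entries, conjugating the defining relations $(\mathsf{A} - \lambda I)\mathbf{v}_{\lambda, r} = \mathbf{v}_{\lambda, r-1}$ shows that $\{\overline{\mathbf{v}}_{\lambda, r}\}_r$ is a Jordan chain for $\bar{\lambda}$. I would therefore fix the basis by declaring $\mathbf{v}_{\bar{\lambda}, r} := \overline{\mathbf{v}}_{\lambda, r}$ for each non-real $\lambda$, and for $\mathbf{x} \in \mathbb{R}^\ell$ the identity $\mathbf{x} = \overline{\mathbf{x}}$ combined with uniqueness of coordinates forces $\overline{c}_{\lambda, r} = c_{\bar{\lambda}, r}$. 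The only genuinely nontrivial input is the rank calculation certifying geometric multiplicity one; everything else is routine Jordan-form and binomial bookkeeping.
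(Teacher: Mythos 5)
Your proof is correct and follows essentially the same route as the paper: reduce to the Jordan normal form by first noting the companion matrix has geometric multiplicity one per eigenvalue, then expand $(\lambda I + N)^n$ by the binomial theorem and read off the coefficients. The only cosmetic difference is that you certify geometric multiplicity one via a rank count on the bottom $\ell-1$ rows, while the paper uses the equivalent observation that any eigenvector must satisfy $x_k = \lambda^{-1}x_{k+1}$; both are fine, and the conjugation bookkeeping at the end matches the paper's intent.
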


\begin{proof}
With $a_{L}\neq 0$ we know that $0\not\in \Lambda $. Also, the special form
of $\mathsf{A}$ results with each $\lambda \in \Lambda $ having geometric
multiplicity one (as $\mathbf{x}\in \mathrm{ker}(\mathsf{A}-\lambda \mathsf{I%
})$ implies that $x_{k}=\lambda ^{-1}x_{k+1}$ for $k=1,\ldots ,L-1$). Recall
that $\mathsf{A}=\mathsf{V}\mathsf{J}\mathsf{V}^{-1}$, with $\mathsf{J}$ the
Jordan normal form of $\mathsf{A}$, consisting here of one (maximal) block
of dimension $m(\lambda )$ per $\lambda \in \Lambda $. Specifically, the
block $\mathsf{J}_{\lambda }\overset{\scriptscriptstyle\triangle }{=}\lambda 
\mathsf{I}+N(m(\lambda ))$ with nilpotent $N_{i,k}(m)=\mathbf{1}_{k=i+1}$
for $i=1,\ldots ,m-1$ such that $N^{r}(m)=\mathbf{1}_{k=i+r}$ for $%
i=1,\ldots ,m-r$ for $1\leq r\leq m$. The columns of $\mathsf{V}=\{\mathbf{v}%
_{\lambda ,j}:j=1,\ldots ,m(\lambda ),\lambda \in \Lambda \}$ are such that $%
\mathbf{v}_{\lambda ,1}$ is the eigenvector of $\mathsf{A}$ for eigenvalue $%
\lambda $ and $(\mathsf{A}-\lambda \mathsf{I})\mathbf{v}_{\lambda ,j}=%
\mathbf{v}_{\lambda ,j-1}$ for $j=2,\ldots ,m(\lambda )$. It is easy to
verify that the power $\mathsf{J}^{n}$ of such a Jordan normal form consists
of block diagonal upper-triangular matrices of dimension $m(\lambda )$,
where $\mathsf{J}_{\lambda }^{n}=\binom{n}{j}\lambda ^{n-j}$ at positions $%
(r,r+j)$ for $1\leq r\leq m(\lambda )$ and $1\leq j\leq m(\lambda )-r$.
Thus, $\mathsf{A}^{n}\mathbf{x}=\mathsf{V}\mathsf{J}^{n}(\mathsf{V}^{-1}%
\mathbf{x})$, so writing $\mathbf{x}=\sum_{\lambda ,j}c_{\lambda ,j}\mathbf{v%
}_{\lambda ,j}$, we have that $\mathsf{V}^{-1}\mathbf{x}=[c_{\lambda
,j}]_{\lambda \in \Lambda ,1\leq j\leq m(\lambda )}$. This then completes
the proof of the lemma.Before the proof we need the following lemma.
\end{proof}

\begin{lemma}
\label{lem-polynomial-approximation}Assume $L\geq m>0$. Then there exists a $%
C>0$, such that for any polynomial $g(x)=\sum_{j=0}^{m-1}c_{j+1}x^{j}$, we
can find a $y\in \{1/L,2/L,...,1\}$ such that%
\begin{equation*}
\left\vert g(y)\right\vert \geq C\max_{1\leq j\leq m}|c_{j}|.
\end{equation*}
\end{lemma}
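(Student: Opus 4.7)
The plan is to recognize this as a routine finite-dimensional norm-equivalence statement, so that the constant $C$ (allowed to depend on $L$ and $m$) is produced by a compactness argument. I would consider the linear map $\Phi : \mathbb{R}^m \to \mathbb{R}^L$ sending a coefficient vector $\mathbf{c} = (c_1,\ldots,c_m)$ to the evaluation vector $(g(1/L),\, g(2/L),\,\ldots,\, g(L/L))$, where $g(x) = \sum_{j=0}^{m-1} c_{j+1} x^j$. This map is represented by the $L \times m$ Vandermonde-type matrix $V$ with entries $V_{k,j} = (k/L)^{j-1}$ for $1 \leq k \leq L$ and $1 \leq j \leq m$.

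Next, since the $L$ nodes $\{k/L : 1 \leq k \leq L\}$ are pairwise distinct and $L \geq m$, any choice of $m$ rows of $V$ gives a non-singular Vandermonde matrix. Hence $V$ has full column rank $m$ and $\Phi$ is injective. On the compact sphere $\{\mathbf{c} \in \mathbb{R}^m : \max_{1 \leq j \leq m} |c_j| = 1\}$, the continuous function $\mathbf{c} \mapsto \max_{1 \leq k \leq L}|g(k/L)|$ is strictly positive by injectivity of $\Phi$, and therefore attains a positive minimum $C > 0$ by compactness. Homogeneity then upgrades this to the bound
\[
\max_{1 \leq k \leq L} |g(k/L)| \;\geq\; C \max_{1 \leq j \leq m} |c_j|
\]
for every choice of coefficients, which is precisely the desired conclusion with $y = k^\star/L$ for the maximizing index $k^\star$.

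Since the whole argument is just norm-equivalence on a finite-dimensional space, I do not expect any substantive obstacle: the only nontrivial input is the classical non-vanishing of the Vandermonde determinant at distinct nodes, which gives injectivity of $\Phi$. A slightly more quantitative alternative, should an explicit constant be preferred, would be to fix any $m$ of the nodes (say $1/L, 2/L, \ldots, m/L$), write each $c_j$ via Lagrange interpolation as a linear combination of $g(1/L), \ldots, g(m/L)$ with coefficients bounded by the corresponding Lagrange weights, and take $C$ to be the inverse of the resulting $\ell^\infty \to \ell^\infty$ operator norm.
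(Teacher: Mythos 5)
Your argument is correct, and your primary route genuinely differs from the paper's. You phrase the statement as a norm-equivalence on the finite-dimensional space of coefficient vectors: the evaluation map $\Phi$ has full rank because the underlying Vandermonde matrix at the distinct nodes $\{k/L\}$ is of rank $m$, and compactness of the $\ell^\infty$-sphere plus continuity then hands you the constant $C$ non-constructively, with homogeneity finishing the job. The paper instead goes directly for an explicit constant: it builds a left inverse $(a_{i,k})$ of the Vandermonde matrix (i.e.\ a dual basis extracting each $c_i$ from the evaluations $g(k/L)$), bounds the entries by $M$, and reads off $C=1/(LM)$. Both hinge on the same nontrivial input, the non-vanishing of the Vandermonde determinant at distinct nodes. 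Your compactness route is cleaner to state and verify but produces no explicit $C$; the paper's interpolation route is more hands-on and quantitative, and it is essentially the ``slightly more quantitative alternative'' you sketch in your final sentences. Since the lemma is only used to produce \emph{some} $C>0$, either proof serves the paper's purposes equally well.
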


\begin{proof}
For any $i\in \lbrack m]$, we can find $a_{i,k}$, $k=1,2,...,L$, such that
for $j\in \lbrack m]$, $\sum_{k=1}^{L}a_{i,k}(k/L)^{j-1}=\delta _{ij}$.
Therefore we have that%
\begin{equation}
\sum_{j=1}^{L}a_{i,k}g((k-1)/L)=c_{i}.  \label{eq-lem-add-polynomial}
\end{equation}%
Denote by $M:=\max_{1\leq i,k\leq m}|a_{i,k}|$. \abbr{WLOG}
we assume that $c_{i+1}=\max_{1\leq j\leq m}|c_{j}|$, thus by (\ref%
{eq-lem-add-polynomial}) obviously we can find $y\in \{1/L,2/L,...,1\}$ such
that $\left\vert g(y)\right\vert \geq \max_{1\leq j\leq m}|c_{j}|/(LM)$.
\end{proof}

{\small 
\bibliographystyle{plain}
\bibliography{persistence}

\begin{thebibliography}{10}

\bibitem{AB11}
F.~Aurzada and C.~Baumgarten.
\newblock Survival probabilities of weighted random walks.
\newblock {\em ALEA Lat. Am. J. Probab. Math. Stat.}, 8:235--258, 2011.

\bibitem{AM17}
F.~Aurzada and S.~Mukherjee.
\newblock Persistence exponents in markov chains.
\newblock {\em arXiv:1703.06447}, 2017.

\bibitem{AS15}
F.~Aurzada and T.~Simon.
\newblock Persistence probabilities and exponents.
\newblock {\em Levy matters V, Lecture Notes in Math. 2149}, pages 183--221,
  2015.

\bibitem{aurzada2015persistence}
Frank Aurzada and Nadine Guillotin-Plantard.
\newblock Persistence exponent for discrete-time, time-reversible processes.
\newblock {\em arXiv preprint arXiv:1502.06799}, 2015.

\bibitem{aurzada2018persistence}
Frank Aurzada and Marvin Kettner.
\newblock Persistence exponents via perturbation theory: Ar (1)-processes.
\newblock {\em arXiv preprint arXiv:1810.09861}, 2018.

\bibitem{aurzada2017persistence}
Frank Aurzada, Sumit Mukherjee, and Ofer Zeitouni.
\newblock Persistence exponents in markov chains.
\newblock {\em arXiv preprint arXiv:1703.06447}, 2017.

\bibitem{BS97}
R.~Ba{\~n}uelos and R.G. Smits.
\newblock Brownian motion in cones.
\newblock {\em Probab. Theory Related Fields}, 108(3):299--319, 1997.

\bibitem{BB96}
R.~Bass and K.~Burdzy.
\newblock Eigenvalue expansions for {B}rownian motion with an application to
  occupation times.
\newblock {\em Electron. J. Probab.}, 1:no.\ 3, approx.\ 19 pp.\ (electronic),
  1996.

\bibitem{BNV94}
H.~Berestycki, L.~Nirenberg, and S.R.S. Varadhan.
\newblock The principal eigenvalue and maximum principle for second-order
  elliptic operators in general domains.
\newblock {\em Comm. Pure Appl. Math.}, 47(1):47--92, 1994.

\bibitem{BrockwellDavis}
P.J. Brockwell and R.A. Davis.
\newblock {\em Time series: theory and methods}.
\newblock Springer Series in Statistics. Springer, New York, 2006.
\newblock Reprint of the second (1991) edition.

\bibitem{Burkholder77}
D.L. Burkholder.
\newblock Exit times of {B}rownian motion, harmonic majorization, and {H}ardy
  spaces.
\newblock {\em Advances in Math.}, 26(2):182--205, 1977.

\bibitem{DeBlassie87}
R.D. DeBlassie.
\newblock Exit times from cones in {${\bf R}^n$} of {B}rownian motion.
\newblock {\em Probab. Theory Related Fields}, 74(1):1--29, 1987.

\bibitem{DeBlassie88}
R.D. DeBlassie.
\newblock Remark on: ``{E}xit times from cones in {${\bf R}^n$} of {B}rownian
  motion''.
\newblock {\em Probab. Theory Related Fields}, 79(1):95--97, 1988.

\bibitem{DDG13}
A.~Dembo, J.~Ding, and F.~Gao.
\newblock Persistence of iterated partial sums.
\newblock {\em Ann. Inst. H. Poincar{\'e} Probab. Statist.}, 49(3):873--884,
  2013.

\bibitem{DM15}
A.~Dembo and S.~Mukherjee.
\newblock No zero-crossings for random polynomials and the heat equation.
\newblock {\em Ann. Probab.}, 43(1):85--118, 2015.

\bibitem{ehrhardt2004persistence}
GCMA Ehrhardt, Satya~N Majumdar, and Alan~J Bray.
\newblock Persistence exponents and the statistics of crossings and occupation
  times for gaussian stationary processes.
\newblock {\em Physical Review E}, 69(1):016106, 2004.

\bibitem{FF15}
N.~Feldheim and O.~Feldheim.
\newblock Long gaps between sign-changes of gaussian stationary processes.
\newblock {\em Int. Math. Res. Notices}, pages 3021--3034, 2015.

\bibitem{feldheim2018probability}
Naomi Feldheim, Ohad Feldheim, Benjamin Jaye, Fedor Nazarov, and Shahaf Nitzan.
\newblock On the probability that a stationary gaussian process with spectral
  gap remains non-negative on a long interval.
\newblock {\em arXiv preprint arXiv:1801.10392}, 2018.

\bibitem{feldheim2017persistence}
Naomi Feldheim, Ohad Feldheim, and Shahaf Nitzan.
\newblock Persistence of gaussian stationary processes: a spectral perspective.
\newblock {\em arXiv preprint arXiv:1709.00204}, 2017.

\bibitem{KK16}
M.~Krishna and M.~Krishnapur.
\newblock Persistence probabilities in centered, stationary, gaussian processes
  in discrete time.
\newblock {\em Indian Journal of Pure and Applied Mathematics}, pages 183--194,
  2016.

\bibitem{Li99}
W.V. Li.
\newblock A {G}aussian correlation inequality and its applications to small
  ball probabilities.
\newblock {\em Electron. Comm. Probab.}, 4:111--118 (electronic), 1999.

\bibitem{longuet1962distribution}
Michael~Selwyn Longuet-Higgins.
\newblock The distribution of intervals between zeros of a stationary random
  function.
\newblock {\em Philosophical Transactions of the Royal Society of London.
  Series A, Mathematical and Physical Sciences}, 254(1047):557--599, 1962.

\bibitem{NR62}
G.F. Newell and M.~Rosenblatt.
\newblock Zero crossing probabilities for gaussian stationary processes.
\newblock {\em Ann. Math. Statist.}, pages 1306--1313, 1962.

\bibitem{rice1945mathematical}
Stephen~O Rice.
\newblock Mathematical analysis of random noise.
\newblock {\em Bell System Technical Journal}, 24(1):46--156, 1945.

\bibitem{R14}
T.~Royen.
\newblock A simple proof of the gaussian correlation conjecture extended to
  some multivariate gamma distributions.
\newblock {\em Far East J. Theor. Stat.}, 48:139--145, 2014.

\bibitem{SSZ98}
G.~Schechtman, Th. Schlumprecht, and J.~Zinn.
\newblock On the {G}aussian measure of the intersection.
\newblock {\em Ann. Probab.}, 26(1):346--357, 1998.

\bibitem{SM08}
G.~Schehr and S.N. Majumdar.
\newblock Real roots of random polynomials and zero crossing properties of
  diffusion equation.
\newblock {\em J. of Stat. Phys.}, 132(2):253--273, 2008.

\bibitem{slepian62}
D.~Slepian.
\newblock The one-sided barrier problem for {G}aussian noise.
\newblock {\em Bell System Tech. J.}, 41:463--501, 1962.

\end{thebibliography}
}

\end{document}